\font\tencmmib=cmmib10 \skewchar\tencmmib '60
\def\lessim{\ \lower4pt\hbox{$
		\buildrel{\displaystyle <}\over\sim$}\ }
\def\gessim{\ \lower4pt\hbox{$\buildrel{\displaystyle >}
		\over\sim$}\ }
\newcommand{\la}{\langle}
\newcommand{\ra}{\rangle}
\newcommand{\e}{\mathbb{E}}
\newcommand{\p}{\mathbb{P}}
\newcommand{\ME}{\mbox{\it ME}}
\newcommand{\MCE}{\mbox{\it MCE}}
\newcommand{\CF}{\mbox{\it CF}}
\newtheorem{lemma}{\bf Lemma}
\newtheorem{theorem}{\bf Theorem}
\newtheorem{corollary}{\bf Corollary}
\newtheorem{remark}{\bf Remark}
\newtheorem{proposition}{\bf Proposition}
\newenvironment{Proof of lemma}{\noindent{\bf Proof of Lemma}}{\hfill$\Box$\newline}
\newenvironment{Proof of theorem}{\noindent{\it Proof of Theorem}}{\hfill\scriptsize{$\Box$}\newline}
\newenvironment{Proof of theorems}{\noindent{\bf Proof of Theorems}}{\hfill$\Box$\newline}
\newenvironment{Proof of proposition}{\noindent{\bf Proof of Proposition}}{\hfill$\Box$\newline}
\newenvironment{Proof of propositions}{\noindent{\bf Proof of Propositions}}{\hfill$\Box$\newline}
\newenvironment{Proof of exercise}{\noindent{\it Proof of Exercise:}}{\hfill$\Box$}
\title{On the energy landscape of spherical spin glasses}
\author{Antonio Auffinger \thanks{Department of Mathematics, Northwestern University, tuca@northwestern.edu, research partially supported by NSF Grant CAREER DMS-1653552 and NSF Grant DMS-1517894.} \\
	\small{Northwestern University}\and Wei-Kuo Chen \thanks{School of Mathematics, University of Minnesota. Email: wkchen@umn.edu, research partially supported by NSF grant DMS-1642207 and Hong Kong Research Grants Council GRF-14302515.}\\
	\small{University of Minnesota}}
\begin{document}
	\maketitle
	\begin{abstract} We investigate the energy landscape of the spherical mixed even $p$-spin model near its maximum energy. We relate the distance between pairs of near maxima to the support of the Parisi measure at zero temperature. We then provide an algebraic relation that characterizes one-step replica symmetric breaking Parisi measures. For these measures, we show that any two nonparallel spin configurations around the maximum energy are asymptotically orthogonal to each other. In sharp contrast, we study models with full replica symmetry breaking and show that all possible values of the asymptotic distance are attained near the maximum energy.  
		
	\end{abstract}

	
	\section{Introduction and main results}
	This work deals with  geometric properties of general Gaussian smooth functions on the $N$ dimensional sphere as $N$ goes to infinity. The questions addressed in this paper can be phrased as: Where are the peaks of a random Morse function in a high dimensional sphere? How can we travel between two peaks and what is their typical spherical distance? 
	
	A rich description of the landscape of these functions is predicted by the theory of mean-field spin glasses. The functions that we consider here are known as the Hamiltonians of mixed spherical $p$-spin models.  Our main result relates the above questions to the structure of the Parisi measure of these models at zero temperature. We confirm and make precise a common prediction by physicists, that the landscape of these functions near the maxima heavily depends on the number of levels of replica symmetry breaking (RSB). For  references in the physics literature the reader is invited to see \cite{MPV}. For applications of spin glass theory in computer science, neural networks and more see \cite{MeMo} and the references therein.

	We now describe the functions that we analyze in the terminology of spin glass theory.
	Let $S_N$ be the sphere $$\left \{\sigma\in\mathbb{R}^N:\sum_{i=1}^N\sigma_i^2=N \right \}.$$ Consider the Hamiltonian of the spherical mixed even $p$-spin model indexed by $S_N,$
	\begin{align}\label{ham}
	H_N(\sigma)&=X_N(\sigma)+h\sum_{i=1}^N\sigma_i
	\end{align}
	for 
	$$
	X_N(\sigma):=\sum_{p\in 2\mathbb{N}}\frac{c_p^{1/2}}{N^{(p-1)/2}}\sum_{1\leq i_1,\ldots,i_p\leq N}g_{i_1,\ldots,i_p}\sigma_{i_1}\cdots\sigma_{i_p}, 
	$$
	where $g_{i_1,\ldots,i_p}$'s are i.i.d. standard Gaussian, $h\geq 0$ denotes the strength of an external field, and the sequence $(c_p)_{p\in 2\mathbb{N}}$ satisfies $c_p\geq 0$, $\sum_{p\in 2\mathbb{N}}c_p=1,$ and 
	\begin{equation}\label{eq:cond}
	\sum_{p\in 2\mathbb{N}}2^pc_p <\infty.
	\end{equation}
	It is easy to check that
	\begin{align*}
	\e X_N(\sigma^1)X_{N}(\sigma^2)=N\xi(R_{1,2}),
	\end{align*}
	where $$R_{1,2}:=\frac{1}{N}\sum_{i=1}^N\sigma_i^1\sigma_i^2$$ is the normalized inner-product between $\sigma^{1}$ and $\sigma^{2}$, known as the overlap, and $$\xi(s):=\sum_{p\in 2\mathbb{N}}c_ps^p.$$ 
	Condition \eqref{eq:cond} is more than enough to guarantee that the sum \eqref{ham} is almost surely finite, and the energy $H_{N}$ is a.s. smooth and Morse; see Theorem 11.3.1 of \cite{AT}. The simplest case is the spherical Sherrington-Kirkpatrick (SK) model, $\xi(s)=s^2$.

	We are interested in the collection of points $\sigma \in S_{N}$ such that $H_{N}(\sigma)$ is close to the maximum value of $H_{N}$. For this, denote 
	the maximum energy (ME) of $H_{N}$ by
	\begin{align*}
	\ME_{N} &=\max_{\sigma\in S_N}\frac{H_N(\sigma)}{N}.
	\end{align*}
	Recently, Chen-Sen \cite{ArnabChen15} and Jagannath-Tobasco \cite{JT} showed that the limiting maximum energy can be computed through a variational principle, similar to the Crisanti-Sommers formula \cite{CS}. More precisely, let $\mathcal{K}$ be the collection of all measures $\nu$ on $[0,1]$, which takes the form,
	$$
	\nu(ds)=1_{[0,1)}(s)\gamma(s)ds+\Delta \delta_{\{1\}}(ds),
	$$
	where $\gamma(s)$ is a nonnegative and nondecreasing function on $[0,1)$ with right-continuity, $\Delta>0,$ and $\delta_{\{1\}}$ is a Dirac measure at $1.$ Define the Crisanti-Sommers functional by
	\begin{align*}
	\mathcal{Q} (\nu)&=\frac{1}{2}\Bigl(\int_0^1(\xi'(s)+h^2)\nu(ds)+\int_0^1\frac{dq}{\nu((q,1])}\Bigr)
	\end{align*}
	for $\nu\in \mathcal{K}.$ The Crisanti-Sommers formula for the maximum energy derived in Chen-Sen \cite{ArnabChen15}\footnote{Although the form in Chen-Sen \cite{ArnabChen15} is not exactly the same as \eqref{CS}, it can be easily expressed in terms of the current form \eqref{CS} by performing a change of variable, $\Delta=L-\int_0^1\gamma(s)ds.$} and Jagannath-Tobasco \cite{JT} states that 
	\begin{align}\label{CS}
	\ME :=\lim_{N\rightarrow\infty}\ME_{N} &=\inf_{\nu\in\mathcal{K}}\mathcal{Q} (\nu).
	\end{align}
	Note that $\mathcal{Q}$ is a strictly convex functional on $\mathcal{K}$ and it was proved in \cite{ArnabChen15,JT} that the right-hand side has a unique minimizer, denoted by $$\nu_P(ds)=\gamma_P(s)1_{[0,1)}(s)ds+\Delta_P\delta_{\{1\}}(ds).$$
	We denote by $\rho_P$ the measure on $[0,1)$ induced by $\gamma_P$, i.e., 
	\begin{align}
	\label{rho}
	\gamma_P(s)=\rho_P([0,s]),\,\,\forall s\in[0,1).
	\end{align}
	We call $\rho_P$ the Parisi measure at zero temperature.


	\subsection{Two general principles}
	
		For fixed $\eta>0,$ our main theorems relate the geometry of the set of spin configurations near the maximum energy
		\begin{equation}\label{eq:mainset}
		\mathcal L(\eta):=\big \{ \sigma \in S_{N}: H_{N}(\sigma) > N (\ME - \eta)\big\}.
		\end{equation}
		to the structure of the Parisi measure $\rho_P.$ Clearly $\mathcal{L}(\eta_1)\subseteq\mathcal{L}(\eta_2)\subseteq S_N$ for $0<\eta_1<\eta_2.$
		
	\subsubsection{Relevance of the Parisi measure}
     For  fixed $\eta >0$ and Borel measurable set $A\subset [-1,1],$ set
	\begin{align*}
	\mathbb{P}_N(\eta,A)&:=\p\bigl(\exists \; \mbox{$\sigma^1,\sigma^2\in \mathcal L(\eta)$ with $R_{1,2}\in A$}\bigr).
	\end{align*}	
	In other words, $\mathbb{P}_N(\eta,A)$ is the probability that there exist two spin configurations near the maximum energy and their overlap lies in $A.$	
	Denote by 
	\begin{align}\begin{split}
	\label{GP:eq1}
	\Gamma&=(\mbox{supp}\rho_P)\cup\{1\},\\
	s_P&=\min \Gamma.
	\end{split}
	\end{align}  
	The following proposition summarizes some properties of $s_P:$
	
	\begin{proposition}
		\label{prop0}
		The quantity $s_P$ obeys the following statements:
		\begin{itemize}
			\item[$(i)$] If $h=0,$ then $s_P=1$ when $\xi(s)=s^2$ and $s_P=0$ when $c_p\neq 0$ for at least one even $p\geq 4$.
			\item[$(ii)$] If $h\neq 0$, then $s_P>0.$ 
		\end{itemize}
	\end{proposition}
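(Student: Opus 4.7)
The plan is to exploit the first-order optimality (KKT) conditions for the strictly convex variational principle \eqref{CS}. Setting $u(q):=\nu_P((q,1])$ for $q\in[0,1)$, the perturbation $\rho_P\mapsto\rho_P+\epsilon\delta_t$ induces the increment $\epsilon\cdot 1_{[t,1)}(s)\,ds$ in $\nu_P$, and computing the first variation of $\mathcal{Q}$ in this direction yields the quantity
\begin{equation*}
\Phi(t):=\xi(1)-\xi(t)+h^2(1-t)-(1-t)\int_0^t\frac{dq}{u(q)^2}-\int_t^1\frac{1-q}{u(q)^2}\,dq.
\end{equation*}
Since $\rho_P$ is constrained only to be a nonnegative measure, optimality forces $\Phi\ge 0$ on $[0,1)$ with $\Phi\equiv 0$ on $\mathrm{supp}(\rho_P)$; the free variation of $\Delta_P$ likewise gives $\int_0^1 dq/u(q)^2=\xi'(1)+h^2$. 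Differentiation yields $\Phi'(t)=-\xi'(t)-h^2+\int_0^t dq/u(q)^2$, so in particular $\Phi'(0)=-h^2$ (using $\xi'(0)=0$).

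Part (ii) is immediate. If $s_P=0$, then $0\in\mathrm{supp}(\rho_P)$ (the support is closed), hence $\Phi(0)=0$; combined with $\Phi\ge 0$ on $[0,1)$, this makes $t=0$ a boundary minimum and forces $\Phi'(0)\ge 0$, contradicting $\Phi'(0)=-h^2<0$.

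For part (i), the case $\xi(s)=s^2$, $h=0$ follows from a short AM-GM/Cauchy-Schwarz argument: a Fubini step gives $\mathcal{Q}(\nu)=\int_0^1 u\,dq+\tfrac{1}{2}\int_0^1 dq/u\ge\sqrt{2\bigl(\int u\bigr)\bigl(\int 1/u\bigr)}\ge\sqrt{2}$, and equality forces $u$ to be constant, hence $\rho_P=0$ and $s_P=1$. For the case $h=0$ with some $c_p>0$ for $p\ge 4$, suppose toward contradiction that $s_P=a>0$. Then $\gamma_P\equiv 0$ on $[0,a)$, so $u\equiv C$ on $[0,a]$. If $a\in(0,1)$, the interior minimum of $\Phi$ at $a\in\mathrm{supp}(\rho_P)$ yields $\Phi(a)=0=\Phi'(a)$, whence $C^2=a/\xi'(a)$; if $a=1$ (so $\rho_P=0$), the $\Delta_P$ stationarity gives the same relation $C^2=1/\xi'(1)$. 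In either case, integrating $\Phi'$ over $[0,a]$ produces
\begin{equation*}
\Phi(0)=\xi(a)-\frac{a^2}{2C^2}=\xi(a)-\frac{a\,\xi'(a)}{2}=\sum_p c_p\Bigl(1-\frac{p}{2}\Bigr)a^p=-\sum_{p\ge 4}c_p\Bigl(\frac{p}{2}-1\Bigr)a^p,
\end{equation*}
which is strictly negative under the hypothesis, contradicting $\Phi(0)\ge 0$. Hence $s_P=0$.

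The main obstacle is setting up the first-order conditions carefully given the shape constraint that $\gamma$ be nondecreasing (equivalently, $\rho_P\ge 0$), which restricts the admissible perturbations and forces $\Phi$ to satisfy only a one-sided inequality off $\mathrm{supp}(\rho_P)$. Once this framework is in place, both parts collapse to elementary algebraic facts: $\Phi'(0)=-h^2$ carries the external field, while the factor $1-p/2$ vanishes at $p=2$ and is strictly negative for every even $p\ge 4$, isolating the pure 2-spin model as the one exceptional case.
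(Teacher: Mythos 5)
Your proof is correct, but it takes a genuinely different route from the paper's. The paper derives Proposition \ref{prop0} as a consequence of Lemma \ref{lem3}, which in turn rests on the optimality conditions of Proposition \ref{prop1} for the Parisi-type functional $\mathcal{P}(B,\nu)$ (the Euler--Lagrange system $f(1)=0$, $\bar f\ge 0$, $\bar f=0$ on $\mathrm{supp}\,\rho_P$), repackaged as $c(s_P)=0$ and $c'(s_P)\le 0$ for $c(u)=\nu_P([0,1])^2(h^2+\xi'(u))-u$; part (ii) is then $c(0)=\nu_P([0,1])^2h^2>0\neq c(s_P)$, and part (i) forces $c\equiv 0$ on $[0,s_P]$, hence $\xi'$ linear by analyticity, a contradiction. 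You instead work directly with the Crisanti--Sommers functional $\mathcal{Q}(\nu)$, computing the one-sided first variation $\Phi(t)$ in the direction $\rho_P\mapsto\rho_P+\epsilon\delta_t$; your $\Phi'(0)=-h^2$ plays the role of $c(0)>0$ in part (ii), and in part (i) you reach a contradiction not by forcing $\xi'$ linear but by an explicit evaluation $\Phi(0)=\xi(a)-\tfrac{a}{2}\xi'(a)=\sum_{p\ge 4}c_p(1-\tfrac p2)a^p<0$. Your treatment of the 2-spin case via Fubini, AM--GM and Cauchy--Schwarz on $\mathcal{Q}$ is also more elementary than the paper's route through \eqref{RS:eq1} and the cited Proposition 1 of \cite{ArnabChen15}. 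The paper's choice is the more economical within the document (Lemma \ref{lem3} and Proposition \ref{prop1} are reused for Theorem \ref{thm2}), whereas yours is self-contained. One point to make fully rigorous: your complementary slackness $\Phi\equiv 0$ on $\mathrm{supp}\,\rho_P$ needs the usual averaging argument over small neighborhoods in the support, since pointwise negative perturbations $-\epsilon\delta_t$ are not admissible unless $\rho_P$ has an atom at $t$; you flag this, and it is indeed standard.
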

 
	Note that since $X_N$ involves only even spin interactions, when the external field vanishes, $H_N$ is symmetric, i.e., $H_N(-\sigma)=H_N(\sigma).$ 
	Our first main result states that in the absence of external field, for any given $u\in[-1,1]$ with $|u|\in \Gamma$, with overwhelming probability there exist two spin configurations around the maximum energy such that their overlap is around $u.$

	\begin{theorem}\label{thm1} Assume $h=0.$ Let $u\in[-1,1]$ with $|u|\in\Gamma$. For any $\varepsilon,\eta>0$, there exists $K>0$ such that for all $N\geq 1,$
				\begin{align}\label{thm1:eq1}
				\mathbb{P}_N\bigl(\eta,(u-\varepsilon,u+\varepsilon)\bigr)&\geq 1-Ke^{-\frac{N}{K}}.
				\end{align}
	\end{theorem}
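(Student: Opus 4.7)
The plan is to go through positive temperatures and Gaussian concentration. For each $\beta>0$, consider the Gibbs measure $G_{N,\beta}(d\sigma)\propto e^{\beta H_N(\sigma)}$ on $S_N$ and the overlap $R_{1,2}$ of two independent samples from $G_{N,\beta}^{\otimes 2}$. Parisi theory for mixed $p$-spin spherical models identifies the weak limit of the law of $R_{1,2}$ as a probability measure $\mu_\beta$ on $[-1,1]$; as $\beta\to\infty$, the Gibbs measure concentrates on $\mathcal{L}(\eta)$, and, after appropriate rescaling and using the $\sigma\mapsto-\sigma$ symmetry when $h=0$, the support of $\mu_\beta$ exhausts $\Gamma$. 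This is what should deliver, for each $|u|\in\Gamma$, a pair of independent Gibbs samples in $\mathcal{L}(\eta)$ with $R_{1,2}$ close to $u$.

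First, I reduce to an annealed estimate. Set
\[
M_N(u,\eps) := \sup\Bigl\{\frac{H_N(\sigma^1)+H_N(\sigma^2)}{2N}:(\sigma^1,\sigma^2)\in S_N^2,\ |R_{1,2}-u|<\eps\Bigr\}.
\]
As a function of the Gaussian disorder, $M_N(u,\eps)$ is $C/\sqrt{N}$-Lipschitz, so Borell-TIS gives
\[
\mathbb{P}\bigl(|M_N(u,\eps)-\mathbb{E} M_N(u,\eps)|>t\bigr)\le 2e^{-Nt^2/C},
\]
and $\ME_N$ concentrates similarly around $\mathbb{E}\ME_N\to\ME$. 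If one shows $\liminf_{N\to\infty}\mathbb{E} M_N(u,\eps)\ge\ME-\delta$ for each $\delta>0$ (possibly after shrinking $\eps$), then with probability $1-Ke^{-N/K}$ some optimizing pair $(\sigma^1,\sigma^2)$ satisfies $|R_{1,2}-u|<\eps$ and $(H_N(\sigma^1)+H_N(\sigma^2))/(2N)>\ME-\eta/2$; since each energy is at most $N\ME_N=N\ME+o(N)$, both must exceed $N(\ME-\eta)$, i.e., lie in $\mathcal{L}(\eta)$.

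For the annealed lower bound, introduce the constrained partition function
\[
Z_N(\beta,A):=\int_{R_{1,2}\in A}e^{\beta(H_N(\sigma^1)+H_N(\sigma^2))}\,d\sigma^1d\sigma^2,\qquad A=(u-\eps,u+\eps).
\]
A Laplace inequality gives $M_N(u,\eps)\ge\frac{1}{2\beta N}\log Z_N(\beta,A)-\frac{1}{2\beta N}\log\mathrm{Vol}(\{R_{1,2}\in A\})$, so the volume correction is $O(1/\beta)$. By the two-replica Parisi formula (Guerra interpolation together with an Aizenman-Sims-Starr or cavity matching lower bound), $\frac{1}{N}\mathbb{E}\log Z_N(\beta,A)\to 2\beta F(\beta)$ whenever $\mu_\beta(A)>0$, where $F(\beta)$ is the one-replica limiting free energy, because the constraint is automatically satisfied by a positive proportion of typical Gibbs pairs. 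Dividing by $\beta$ and sending $\beta\to\infty$, using $F(\beta)/\beta\to\ME$ and the fact that for $|u|\in\Gamma$ arbitrarily large $\beta$ yields $\mathrm{supp}(\mu_\beta)\cap(u-\eps,u+\eps)\neq\emptyset$, closes the argument.

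The principal obstacle is the identification $\frac{1}{N}\mathbb{E}\log Z_N(\beta,A)\to 2\beta F(\beta)$ when $\mu_\beta(A)>0$, together with the passage from $\mathrm{supp}(\mu_\beta)$ to $\Gamma$. The first is a Franz-Parisi-type statement: the constrained free energy coincides with the unconstrained one precisely when the constraint is typical under $G_{N,\beta}^{\otimes 2}$. The second relies on stability of Parisi minimizers under $\beta\to\infty$, for which the strict convexity of the Crisanti-Sommers functional $\mathcal{Q}$ on $\mathcal{K}$ and the uniqueness of $\nu_P$ recalled above are the key inputs; quantifying this stability uniformly in $u\in\Gamma$, so that one can choose a single $\beta$ that captures every prescribed overlap to within $\eps$, is the technical heart of the proof.
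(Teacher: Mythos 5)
Your overall strategy --- reduce via Gaussian concentration to showing that the expected constrained two-point maximum $\MCE_N\bigl((u-\eps,u+\eps)\bigr)$ converges to $2\ME$, and prove the latter by passing through positive temperature and the Parisi measure $\mu_\beta$ --- is the same as the paper's (Theorem \ref{thm3} together with Lemmas \ref{lem1} and \ref{lem2}). The reduction to an annealed estimate is exactly the paper's concentration step; the claim that the constrained coupled free energy equals twice the free energy when the constraint is typical is the paper's Lemma \ref{lem1}; and the passage from $\mathrm{supp}\,\mu_\beta$ to $\Gamma$ as $\beta\to\infty$ is the paper's Lemma \ref{lem2}, proved there by combining the vague convergence $\beta x_{\beta,P}(s)\,ds\to\nu_P$ with a Dudley entropy bound relating $\MCE_N$ and $\CF_{N,\beta}$.

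There is, however, a genuine gap. The sentence ``Parisi theory \dots identifies the weak limit of the law of $R_{1,2}$ as a probability measure $\mu_\beta$'' is only valid for \emph{generic} mixtures, i.e.\ those for which the span of $\{s^p: c_p\neq 0\}\cup\{1\}$ is dense in $C[0,1]$. For an arbitrary even mixture --- and in particular for the pure $p$-spin model, which is the flagship example for $\Gamma=\{0,1\}$ --- the limiting overlap distribution under $\e\langle\cdot\rangle_\beta$ is not a priori identified with the Parisi minimizer, so the hypothesis ``$\mu_\beta(A)>0$'' is not available and the Franz--Parisi step has nothing to run on. The paper handles this by approximating the given $\xi$ by a sequence of generic mixtures $\xi_n$, proving (via a priori bounds on the Parisi densities and Fatou) that the zero-temperature optimizers $(B_n,\nu_n)$ converge to $(B_P,\nu_P)$, and then transferring the conclusion of Lemma \ref{lem2} from the generic $\xi_n$ back to $\xi$. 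This approximation argument is essential and absent from your proposal.

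Two further remarks. First, the mechanism you propose for the constrained-free-energy identity (two-replica Guerra matched against an Aizenman--Sims--Starr lower bound) would require genuinely delicate work; the paper proves the same statement by a short contradiction argument: if $\CF_{N,\beta}(A)<2F_{N,\beta}-\eta_0$ along a subsequence, Gaussian concentration forces $\e\langle I(R_{1,2}\in A)\rangle_\beta\to 0$, contradicting $u\in\mathrm{supp}\,\mu_{\beta,P}$. Second, the ``technical heart'' you flag --- choosing a single $\beta$ that captures every overlap in $\Gamma$ uniformly --- is not required: the theorem fixes $u$, so the temperature and the constant $K$ may depend on $u$, $\eps$, and $\eta$.
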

		
	In the case that the external field is present, i.e., $h\neq 0,$ the Hamiltonian is no longer symmetric and Proposition \ref{prop0}$(ii)$ asserts $s_P>0.$ An analogous result of Theorem \ref{thm1} remains valid. Furthermore, the overlap between any two spin configurations near the maximum energy does not lie in $[-1,s_P).$
	
	\begin{theorem}\label{thm2} Assume $h\neq 0.$ 
		\begin{itemize}
			\item[$(i)$] Let $u\in \Gamma $. 
			For any $\varepsilon,\eta>0,$ there exists $K$ independent of $N$ such that for all $N\geq 1,$
			\begin{align}\label{thm2:eq1}
			\mathbb{P}_N\bigl(\eta,(u-\varepsilon,u+\varepsilon)\bigr)&\geq 1-Ke^{-\frac{N}{K}}.
			\end{align}
			\item[$(ii)$] For any $\varepsilon>0,$ there exist $\eta,K>0$ such that for all $N\geq 1,$
			\begin{align}
			\label{thm2:eq2}
			\mathbb{P}_N\bigl(\eta,[-1,s_P-\varepsilon]\bigr)&\leq Ke^{-\frac{N}{K}}.
			\end{align}
		\end{itemize}
	\end{theorem}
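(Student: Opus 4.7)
My plan is to recast both parts of Theorem 2 in terms of the constrained two-replica maximum energy
\begin{equation*}
M_N(A) := \frac{1}{N}\max_{\substack{\sigma^1,\sigma^2\in S_N,\\ R_{1,2}\in A}} \frac{H_N(\sigma^1)+H_N(\sigma^2)}{2}.
\end{equation*}
The event inside $\mathbb{P}_N(\eta,A)$ directly implies $M_N(A)\geq \ME-\eta$, and conversely, since each $H_N(\sigma^i)/N\leq \ME_N$ with $\ME_N\to\ME$ at exponential rate, any pair achieving $M_N(A)\geq \ME-\eta/2$ automatically has both replicas in $\mathcal L(\eta)$ up to an exponentially small error. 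Thus the theorem reduces to identifying $\lim_N \mathbb{E} M_N(A)$ for $A=(u-\varepsilon,u+\varepsilon)$ and $A=[-1,s_P-\varepsilon]$. Gaussian concentration applied to the pair Hamiltonian, which is $\sqrt{2\xi(1)/N}$-Lipschitz in the underlying disorder, upgrades any deterministic separation $|\mathbb{E}M_N(A)-\ME|\geq \delta$ into an exponential probability bound of the form $Ke^{-N/K}$.

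For part (ii), I would establish the strict upper bound $\limsup_N \mathbb{E} M_N([-1,s_P-\varepsilon])\leq \ME-\delta(\varepsilon)$ for some $\delta(\varepsilon)>0$ via a two-replica Guerra--Talagrand interpolation with a coupled ansatz parametrized by $(\nu,q)\in\mathcal{K}\times[-1,s_P-\varepsilon]$, where $\nu$ plays the role of the Parisi measure for each marginal replica and $q$ pins the joint overlap. The interpolation yields $\mathbb{E}M_N([-1,s_P-\varepsilon])\leq \inf_{\nu,q}\Phi(\nu,q)$. Taking $\nu=\nu_P$, I would relate $\partial_q\Phi(\nu_P,q)$ to the functional derivative of $\mathcal{Q}$ at $\nu_P$ in the direction of a Dirac mass at $q$. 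The first-order optimality of $\nu_P$ for the Crisanti--Sommers problem forces this derivative to be nonnegative on $[0,1]$ and to vanish exactly on $\Gamma=(\mathrm{supp}\,\rho_P)\cup\{1\}$. Since $[-1,s_P-\varepsilon]$ sits strictly below $s_P=\min\Gamma$, continuity and compactness yield a uniform gap $\delta(\varepsilon)>0$, and choosing $\eta<\delta(\varepsilon)/2$ then gives \eqref{thm2:eq2}.

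For part (i), the case $u=1$ is immediate by taking $\sigma^1=\sigma^2$ a near-maximizer of $H_N$. For $u\in\mathrm{supp}\,\rho_P$, I would prove the matching lower bound $\liminf_N \mathbb{E} M_N((u-\varepsilon,u+\varepsilon))\geq \ME$. The heuristic driver is that, since $u$ lies in $\mathrm{supp}\,\rho_P$, the derivative of $\mathcal{Q}$ at $\nu_P$ in the direction of $\delta_{\{u\}}$ vanishes, so the coupled variational value at $(\nu_P,u)$ coincides with $\ME$: imposing $R_{1,2}\approx u$ is ``free'' at the Parisi optimum. To promote this variational equality into a bound at the level of configurations, I would adapt the Aizenman--Sims--Starr and Chen--Sen/Jagannath--Tobasco cavity constructions used for the unconstrained $\ME$ to the two-replica setting, exploiting the structure of $\rho_P$ near $u$ to produce an actual pair $(\sigma^1,\sigma^2)$ with controlled overlap near $u$ and total energy arbitrarily close to $2N\,\ME$.

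The principal obstacle is the matching lower bound in part (i). Upper bounds via Guerra--Talagrand interpolation are by now fairly routine, but constructing near-maximal \emph{pairs} with a prescribed asymptotic overlap requires tracking two coupled replicas through a cavity argument while simultaneously preserving the marginal Parisi structure and enforcing $R_{1,2}\approx u$. The essential input is the vanishing of the Crisanti--Sommers variational derivative at points of $\mathrm{supp}\,\rho_P$, which ensures the overlap constraint can be inserted without loss; making this quantitative at the level of a two-replica interpolation, and ruling out the possibility that the best pair drifts to a nearby overlap outside $(u-\varepsilon,u+\varepsilon)$ where the constraint is also free, is where the bulk of the technical work would lie.
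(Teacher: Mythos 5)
Your reduction of both parts to the deterministic comparison of $\MCE_N(A)$ with $2\ME$ plus Gaussian concentration is exactly the paper's framework, and your instinct to invoke the first-order optimality of $\nu_P$ in a coupled Guerra--Talagrand bound is the right high-level idea for (ii). However, there are genuine gaps on both sides.

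For part (ii), the derivative that actually produces the strict gap is not the derivative in the overlap variable $q$ of a coupled functional, but the derivative in the \emph{Lagrange multiplier} $\lambda$ dual to the overlap constraint, evaluated at $\lambda=0$. The paper proves (Lemma~\ref{lem3}, Step~1 of the proof of Theorem~\ref{thm2}) that $\partial_\lambda\mathcal{P}_u(B_P,\lambda,\nu_P)|_{\lambda=0}=c(u)=f(u)$, where $f$ is the pointwise derivative appearing in Proposition~\ref{prop1}, not its antiderivative $\bar f$. The optimality conditions give $\bar f\geq 0$ with $\bar f$ vanishing on $\mathrm{supp}\,\rho_P$, and combined with $c(s_P)=0$, $c'(s_P)\leq 0$ and the monotonicity of $c'$ this forces $c(u)\neq 0$ for $u\in[-s_P,s_P-\varepsilon]$; a second-order Taylor expansion in $\lambda$ then yields the quadratic gap $\mathcal{P}_u\leq 2\ME-K'c(u)^2$. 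Your ``continuity and compactness'' step would not close this: knowing only that the variational derivative (i.e. $\bar f$) is nonnegative and vanishes on $\Gamma$ does \emph{not} imply it is bounded away from zero on $[0,s_P-\varepsilon]$, since the optimality conditions constrain $\bar f$ only $\rho_P$-almost everywhere. You also need a separate argument for $u\in[-1,-s_P)$, which the paper handles via the asymmetry introduced by $h\neq 0$ (Step~3).

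For part (i), the paper does not attempt a zero-temperature two-replica cavity construction. Instead it proves $\lim_N\MCE_N((u-\varepsilon,u+\varepsilon))=2\ME$ for $u\in\mathrm{supp}\,\rho_P$ (Theorem~\ref{thm3}) by: (a) establishing the analogous statement at positive temperature for \emph{generic} models, where it follows by contradiction from the identification of $\mu_{\beta,P}$ as the limiting overlap distribution under $\e\langle\cdot\rangle_\beta$ (Lemma~\ref{lem1}); (b) passing to zero temperature via Dudley's entropy integral (Lemma~\ref{lem2}); and (c) approximating the given mixture by generic ones and showing the optimizers $(B_n,\nu_n)$ converge to $(B_P,\nu_P)$ along a subsequence. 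A direct Aizenman--Sims--Starr-type construction of near-maximal \emph{pairs} at prescribed overlap, as you propose, is not available in the literature and would be far harder than the unconstrained case; your own remark that this is ``where the bulk of the technical work would lie'' is accurate, and in the paper this work is done by a quite different route which leverages known positive-temperature overlap structure rather than a bespoke two-replica cavity argument.
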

	
	
	In view of Theorems \ref{thm1} and \ref{thm2}, one would wonder what the corresponding result could be when the overlap is restricted to $[s_P,1]\setminus \Gamma.$ In Section \ref{sec2}, we explore three cases of the mixed even $p$-spin model, where we show that the probability of having two spin configurations near the maximum energy with overlap inside $[s_P,1]\setminus \Gamma$ is exponentially small. 
	
	\subsubsection{An equidistant structure}
	
    For any fixed $q\in \Gamma$, Theorems \ref{thm1} and \ref{thm2} assert the existence of a pair of spin configurations near the maximum energy with overlap around $u.$ Our second principle here shows that if we take $q=0$ when $h=0$ and $q=s_P$ when $h\neq 0$, then there exist exponentially many equidistant spin configurations near the maximum energy. For any $\varepsilon,\eta,K>0$ and $q\in[0,1],$ denote by 
    \begin{align*}
    \p_N(\varepsilon,\eta,q,K)
    \end{align*} 
    the probability that there exists a subset $O_N\subset S_N$ such that
    \begin{enumerate}
    	\item[$(i)$] $O_N\subset \mathcal{L}(\eta).$
    	\item[$(ii)$] $O_N$ contains at least $Ke^{N/K}$ many elements.
    	\item[$(iii)$] $|R(\sigma,\sigma')-q|\leq\varepsilon$ for all distinct $\sigma,\sigma'\in O_N.$
    \end{enumerate}
    Denote by $q_{0}=0$ if $h=0$ and $q_{0}=s_P$ if $h\neq 0.$  Our main result is stated as follows.
   
	\begin{proposition}
		\label{thm1.1}
		For any $\varepsilon,\eta>0$, there exists $K>0$ such that for any $N\geq 1,$
		\begin{align*}
		\p_N(\varepsilon,\eta,q_0,K)\geq 1-Ke^{-N/K}.
		\end{align*}
	\end{proposition}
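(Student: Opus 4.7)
The plan is to reduce Proposition \ref{thm1.1} to a constrained $M$-replica maximum-energy estimate and then extract the required set by pigeonhole. For an integer $M = M(N) \le Ke^{N/K}$ to be chosen, set
\[ \mathcal{C}_M := \{(\sigma^1,\ldots,\sigma^M) \in S_N^M : |R(\sigma^\ell,\sigma^{\ell'}) - q_0| < \varepsilon,\ \ell \neq \ell'\},\qquad F_M := \max_{\mathcal{C}_M} \frac{1}{MN}\sum_{\ell=1}^M H_N(\sigma^\ell). \]
The central claim to establish is that $F_M \ge \ME - \eta/4$ with probability at least $1 - Ke^{-N/K}$, uniformly for $M \le Ke^{N/K}$. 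Granting this claim, together with the standard concentration $\ME_N \le \ME + \eta/4$ with high probability, a pigeonhole argument applied to the optimal $M$-tuple forces at least $3M/5$ of its entries to lie in $\mathcal{L}(\eta)$: too many entries below $\ME - \eta$ would drag the sum beneath $M(\ME-\eta/4)$ given the a priori upper bound $\ME+\eta/4$ on each entry. The retained entries inherit the pairwise overlap bound from $\mathcal{C}_M$ and thus constitute the required $O_N$.

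The concentration step is routine: $F_M$ is the supremum of a centered Gaussian process whose entry-wise variance is $O(N^{-1})$ uniformly in $M$, since for $\ell \neq \ell'$ the off-diagonal covariances are of order $\xi(q_0)/N$ with $q_0 < 1$. By Borell-TIS, $|F_M - \e F_M| < \eta/8$ with failure probability $\le Ke^{-N/K}$ independent of $M$. Hence the crux of the argument is to prove $\e F_M \ge \ME - \eta/8$. Because $q_0 \in \Gamma$, Theorem \ref{thm1} (for $h = 0$) and Theorem \ref{thm2}(i) (for $h \neq 0$) already handle the case $M=2$. To iterate, I would use a slice construction: given $\sigma^1,\ldots,\sigma^k$ already chosen, I would apply a conditional version of Theorem \ref{thm1} or \ref{thm2}(i) on the band $\{\sigma \in S_N : R(\sigma,\sigma^j) \approx q_0,\ j \le k\}$, which is approximately an $(N-k)$-dimensional sphere carrying an induced mixed even $p$-spin model whose Parisi measure still contains $q_0$ in its support (via a Schur complement decomposition of the conditioned Gaussian field, or equivalently a change of basis adapted to $\mathrm{span}(\sigma^1,\ldots,\sigma^k)$). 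In the case $h = 0$ with $q_0 = 0$, a simpler alternative exploits the orthogonal invariance of the law of $X_N$: random rotations of a single near-maximum yield many near-maxima whose pairwise overlaps concentrate at $0$, with the requisite entropy supplied by a standard Gilbert-Varshamov packing on $S_N$.

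The main obstacle is uniformity in $M$ up to the exponential scale $e^{N/K}$. The iteration must survive to this range, requiring two inputs: first, that equidistant packings on $S_N$ with overlap $q_0$ have entropy $\Theta(N)$ (a direct count); second, that the per-replica maximum energy does not degrade as $k$ grows into this entropy range. The second is the delicate point and is most naturally addressed via a Guerra-Talagrand constrained interpolation for the Crisanti-Sommers functional, where membership $q_0 \in \Gamma$ ensures that the infimum of the constrained Parisi functional still equals the unconstrained $\ME$. Carrying out the accounting so that the $M$-dependent error remains exponentially small in $N$ uniformly in $M$ is the most technical step of the plan.
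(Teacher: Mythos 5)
Your architecture (constrained $M$-replica maximum $F_M$, Gaussian concentration, then pigeonhole) is sensible, and the concentration computation and the pigeonhole extraction are both correct. But the argument has a genuine gap exactly where you flag the ``most technical step'': you never actually establish $\e F_M \geq \ME - \eta/8$ uniformly for $M$ up to $e^{N/K}$, and the devices you sketch for it cannot be made to work at that scale. The slice construction degenerates once $M$ exceeds $N$: conditioning on $\sigma^1,\ldots,\sigma^k$ (and on the field along them) reduces the ambient dimension by roughly $k$, so the iteration cannot even be set up past $k \approx N$, while the proposition requires $M$ of order $e^{N/K}$. The random-rotation alternative for $h=0$ is also flawed: if $U\in O(N)$ and $\sigma^*$ maximizes $X_N$, then $U\sigma^*$ maximizes $X_N\circ U^{-1}$, which is a fresh Hamiltonian with the same law, not $X_N$ itself; only the symmetry $H_N(\sigma)=H_N(-\sigma)$ is available, and that yields overlap $-1$, not $0$. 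Finally, a Guerra--Talagrand bound on the full $M$-replica constrained functional is an entirely different (and much heavier) piece of machinery than anything developed in this paper, which only proves a two-replica RSB bound (Theorem~\ref{lem5}); invoking it does not close the gap, it relabels it.

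The paper instead uses chaos in disorder, which sidesteps all of this. One introduces $k$ i.i.d.\ copies $X_N^1,\ldots,X_N^k$ of the disorder, sets $H_{N,t}^\ell = \sqrt{t}\,X_N + \sqrt{1-t}\,X_N^\ell + h\sum_i\sigma_i$, and lets $\sigma_t^\ell$ be the maximizer of $H_{N,t}^\ell$. For $t$ close to $1$, each $\sigma_t^\ell$ is, with exponentially high probability, a near-maximum of the original $H_N$ (because $H_{N,t}^\ell$ and $H_N$ differ by a small Gaussian perturbation, controlled by Gaussian concentration of the extremum). Crucially, Proposition~\ref{lem4} (disorder chaos) forces $R(\sigma_t^\ell,\sigma_t^{\ell'})$ to concentrate at a \emph{single} value $u_t$ for each pair $\ell\neq\ell'$, with a failure probability $C'e^{-N/C'}$ whose constant is independent of $k$, $\ell$, $\ell'$; and Lemma~\ref{lem6} gives $u_t\to s_P$ as $t\to 1^-$ (with $u_t\equiv 0$ when $h=0$). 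A union bound over the at most $\binom{k}{2}$ pairs then permits taking $k$ up to $\exp(N/(2\max(2C',C)))$. The fact that the pairwise estimate is uniform is precisely what lets the argument reach exponential $M$ without any multi-replica variational bound, and it is the ingredient your plan is missing.
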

	
	A major feature of Proposition \ref{thm1.1} is that when the external field vanishes $h=0,$ we can always find exponentially many orthogonal spin configurations around the maximum energy for any mixture $\xi$. One may find an analogous statement of Proposition \ref{thm1.1} in \cite{CHL16} in the setting of the mixed even $p$-spin model with Ising spin configuration space.

	\subsubsection{Ideas of the proof}
	Before moving to our examples, we briefly sketch the main approach and perspective of this paper and compare to the existing results. Our approach to Theorems \ref{thm1} and \ref{thm2} is via the  maximum of the coupled energy (MCE) with overlap constraint,
	\begin{align*}
	\MCE_N(A)&:=\frac{1}{N}\e \max_{R_{1,2}\in A}\bigl(H_N(\sigma^1)+H_N(\sigma^2)\bigr).
	\end{align*} 
	Here, $A$ is a Borel measurable subset of $[-1,1].$ In particular, we care for which sets $A$, $\MCE_N(A)$ and $2\ME_N $ are asymptotically the same. When this occurs, it means that one can always find two spin configurations, whose energies are around the global maximum and the overlap is in $A.$ If $\MCE_N(A)$ and $2\ME_N$ are asymptotically different, then the overlap between any two spin configurations around the maximum energy does not lie in $A.$  While it is in general very difficult to compare the values of two extrema Gaussian fields, it turns out that the current case is achievable and the set $A$ depends closely on the Parisi measure $\rho_P.$
	
	The above strategy is different from the approaches used in the recent studies of the landscape of spherical $p$-spin models, especially those connected to the complexity of such functions \cite{ABC, AB, S1, S2, SZ}. Here, we neither rely on the use of the Kac-Rice formula, nor restrict ourselves to the study of local maxima or critical points. Of course, inside each connected component of $\mathcal L(\eta)$ there exists at least one local maxima of $H_{N}$. As we will see in the next section, this fact combined with Theorem \ref{thm:1RSB:os} below provides a different proof and extends the results of Subag \cite{S2} about the orthogonality of critical points in the pure $p$-spin model (See Remark \ref{SubagRem}). Another advantage of our approach is that it also allows to establish Theorems \ref{thm1} and \ref{thm2} in the setting of the mixed even $p$-spin models with Ising-spin configuration space following an identical argument.

	\subsection{Levels of replica symmetry breaking at zero temperature}\label{sec2}

	In this section, we  explore the consequences of Theorems \ref{thm1} and \ref{thm2} depending on the structure of the support of the Parisi measure $\rho_{P}$.	
	We say that $\rho_P$ is replica symmetric (RS) if $\rho_P\equiv 0$ on $[0,1)$, has $k$-step replica symmetry breaking ($k$RSB) if $\rho_P=\sum_{i=1}^kA_i\delta_{\{q_i\}}$ for some $A_1,\ldots,A_k>0$ and distinct $q_1,\ldots,q_k\in[0,1),$ and has full replica symmetry breaking (FRSB) otherwise. Under different conditions on $\xi$ and $h$, examples devoted to RS, 1RSB, and FRSB were discussed in Chen-Sen \cite{ArnabChen15}, while Jagannath-Tobasco \cite{JT} presented a description on the structure of the Parisi measure in general situations.

	\subsubsection{RS solution}
	
	In the first example, we consider the mixed even $p$-spin model, whose $\xi$ and $h$ satisfy \begin{align}
	\label{thm:RS1}
	\xi''(1)\leq\xi'(1)+h^2.
	\end{align}
	In \cite[Proposition 1]{ArnabChen15}, it was shown that this is a sufficient and necessary condition to guarantee that the Parisi measure of the Crisanti-Sommers formula \eqref{CS} is  replica symmetric. In this case, it was readily computed in \cite[Proposition 1]{ArnabChen15} that 
	\begin{align}\begin{split}\label{RS:eq1}
	\nu_P (ds)&=\bigl(\xi'(1)+h^2\bigr)^{-1/2}\delta_{\{1\}}(ds),\\
	\ME &=\bigl(\xi'(1)+h^2\bigr)^{1/2}.
	\end{split}
	\end{align}
	Therefore, $\Gamma=\{1\}.$
	
	\begin{theorem}[RS]\label{thm:RS}
		Assume $h\neq 0$ and \eqref{thm:RS1} holds.		
		For any $\varepsilon\in (0,1),$ there exist $\eta,K>0$ such that
		\begin{align*}
		\mathbb{P}_N\bigl(\eta,[-1,1-\varepsilon]\bigr)&\leq Ke^{-\frac{N}{K}}.
		\end{align*}
	\end{theorem}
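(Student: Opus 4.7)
The claim is an immediate corollary of Theorem \ref{thm2}(ii) once we identify $s_P$ in the RS regime.

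First, I would read off $s_P$ from the explicit form of the Parisi minimizer recorded in \eqref{RS:eq1}. Since $\nu_P(ds)=(\xi'(1)+h^2)^{-1/2}\delta_{\{1\}}(ds)$ is a pure point mass at $1$, its density component $\gamma_P$ vanishes identically on $[0,1)$. By \eqref{rho}, the induced measure $\rho_P$ is therefore the zero measure on $[0,1)$, so $\mbox{supp}\,\rho_P\cap[0,1)=\emptyset$. From \eqref{GP:eq1} this yields
\begin{equation*}
\Gamma=(\mbox{supp}\,\rho_P)\cup\{1\}=\{1\},\qquad s_P=\min\Gamma=1.
\end{equation*}

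Second, since $h\neq 0$ by hypothesis, I would apply Theorem \ref{thm2}(ii) directly with this value of $s_P$: for any $\varepsilon\in(0,1)$ there exist $\eta,K>0$ such that
\begin{equation*}
\mathbb{P}_N\bigl(\eta,[-1,1-\varepsilon]\bigr)=\mathbb{P}_N\bigl(\eta,[-1,s_P-\varepsilon]\bigr)\leq Ke^{-N/K},
\end{equation*}
which is precisely the conclusion of Theorem \ref{thm:RS}.

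The main obstacle lies entirely upstream in the proof of Theorem \ref{thm2}(ii); no further input is required at the level of Theorem \ref{thm:RS} beyond the identification $s_P=1$. In keeping with the strategy sketched in the introduction, the expected route for Theorem \ref{thm2}(ii) would be to compare $\MCE_N([-1,s_P-\varepsilon])$ with $2\,\ME_N$, using strict convexity of the Crisanti-Sommers functional $\mathcal{Q}$ on $\mathcal{K}$ and uniqueness of its minimizer $\nu_P$ to produce an asymptotic strict gap, and then promote that gap to an exponentially small probability bound by standard Gaussian concentration for the Lipschitz functionals $\sigma\mapsto H_N(\sigma)$ on $S_N$. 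As a sanity check, in the RS case the value $\ME=(\xi'(1)+h^2)^{1/2}$ from \eqref{RS:eq1} can be used to verify the strict-gap step explicitly, which is consistent with the picture that near-maximizers cluster at overlap $1$ and therefore form a single, essentially concentrated, replica-symmetric blob on $S_N$.
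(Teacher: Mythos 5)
Your proof is correct and matches the paper's argument exactly: both identify $\Gamma=\{1\}$ and $s_P=1$ from the explicit RS minimizer \eqref{RS:eq1}, then invoke Theorem \ref{thm2}$(ii)$ since $h\neq 0$. The upstream speculation about how Theorem \ref{thm2}$(ii)$ is proved is not needed here and can be dropped.
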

	
	This theorem says that if the strength of the external field $h$ dominates the mixed $p$-spin interactions $X_N$, i.e., \eqref{thm:RS1} holds, then any two spin configurations with energies near the global maximum must be very close to each other. The picture of Theorem \ref{thm:RS} will change drastically if one considers different mixtures. 
	
	\subsubsection{FRSB solution}
	
    The second example is the mixed even $p$-spin model with FRSB Parisi measure. Assume that the external field $h$ no longer dominates $X_N$, i.e., $\xi''(1)>\xi'(1)+h^2$. Suppose that $1/\sqrt{\xi''}$ is concave on $(0,1]$. 
	Recall $\Gamma$ from \eqref{GP:eq1}. From \cite[Proposition 2]{ArnabChen15}, it was computed that 
	\begin{align*}
	\nu_P (ds)&=\gamma_P (s)1_{[0,1)}(s)ds+\xi''(1)^{-1/2}\delta_{\{1\}}(ds),\\
	\ME &=s_P\xi''(s_P)^{1/2}+\int_{s_P}^1\xi''(s)^{1/2}ds,
	\end{align*}
	where $s_P\in[0,1]$ is the unique solution to 
	\begin{align*}
	s_P\xi''(s_P)=\xi'(s_P)+h^2
	\end{align*}
	and
	\begin{align}\begin{split}
	\label{eq2}
	\gamma_P (s)&=\left\{
	\begin{array}{ll}
	0,&\mbox{if $s\in[0,s_P)$},\\
	\frac{\xi'''(s)}{2\xi''(s)^{3/2}},&\mbox{if $q\in [s_P,1)$}.
	\end{array}		\right.
	\end{split}
	\end{align}
	From \eqref{eq2}, the Parisi measure $\rho_P$ is supported on $[s_P,1)$ and thus it is FRSB. Our results below present a completely different behavior compared to Theorem \ref{thm:RS} if one considers the  opposite region of \eqref{thm:RS1}.

	\begin{theorem}[FRSB]\label{FRSB}
		Assume $\xi''(1)>\xi'(1)+h^2$ and $1/\sqrt{\xi''}$ is concave on $(0,1]$. 
		\begin{itemize}
			\item[$(i)$] Assume $h=0.$ Let $u\in [-1,1].$ For any $\varepsilon,\eta>0,$ there exist $K>0$ such that
			\begin{align*}
			\mathbb{P}_N\bigl(\eta,(u-\varepsilon,u+\varepsilon)\bigr)&\geq 1-Ke^{-\frac{N}{K}}.
			\end{align*}
			\item[$(ii)$] Assume $h\neq 0.$ We have that
			\begin{itemize}
				\item[$(ii')$] Let $u\in [s_P,1].$ For any $\varepsilon,\eta>0,$ there exists $K>0$ such that
				\begin{align*}
				\mathbb{P}_N\bigl(\eta,(u-\varepsilon,u+\varepsilon)\bigr)&\geq 1-Ke^{-\frac{N}{K}}.
				\end{align*}
				\item[$(ii'')$] For any $\varepsilon>0$, there exists $\eta,K>0$ such that
				\begin{align*}
				\mathbb{P}_N\bigl(\eta,[-1,s_P-\varepsilon]\bigr)&\leq Ke^{-\frac{N}{K}}.
				\end{align*}
			\end{itemize}
		\end{itemize}
	\end{theorem}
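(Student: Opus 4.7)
The plan is to deduce Theorem \ref{FRSB} as an essentially immediate corollary of Theorems \ref{thm1} and \ref{thm2}; the only substantive work is to identify the set $\Gamma=(\mbox{supp}\,\rho_P)\cup\{1\}$ explicitly under the FRSB hypothesis and to verify that it matches the range of overlaps appearing in the conclusions of those theorems.

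First I would pin down $s_P$ via the monomial expansion
\[
s\xi''(s)-\xi'(s)=\sum_{p\in 2\mathbb{N}}p(p-2)c_p s^{p-1},
\]
whose $p=2$ coefficient vanishes and whose higher coefficients are nonnegative. If every $c_p$ with $p\geq 4$ were zero, we would have $\xi(s)=c_2s^2$ and the FRSB hypothesis $\xi''(1)>\xi'(1)+h^2$ would reduce to $-h^2>0$, a contradiction. Hence at least one $c_p>0$ for some even $p\geq 4$, the displayed expression is strictly positive on $(0,1]$, and consequently $\xi'''(s)>0$ for every $s>0$. When $h=0$ this identifies $s_P=0$ as the unique solution of $s\xi''(s)=\xi'(s)$; when $h\neq 0$ the map $s\mapsto s\xi''(s)-\xi'(s)-h^2$ is strictly negative at $s=0$ and strictly positive at $s=1$, producing a unique $s_P\in(0,1)$.

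Using $\xi'''(s)>0$ for $s>0$, the density $\gamma_P(s)=\xi'''(s)/(2\xi''(s)^{3/2})$ from \eqref{eq2} is strictly positive and continuous on the open interval with left endpoint $\max(s_P,0)$ and right endpoint $1$, while $\gamma_P\equiv 0$ on $[0,s_P)$. This yields $\mbox{supp}\,\rho_P=[s_P,1)$ and therefore $\Gamma=[s_P,1]$.

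With $\Gamma$ so identified, the three assertions follow at once. In case $(i)$, $s_P=0$ gives $\Gamma=[0,1]$, hence $|u|\in\Gamma$ for every $u\in[-1,1]$ and the claim is a direct application of Theorem \ref{thm1}. In case $(ii')$, $\Gamma=[s_P,1]$ so any $u\in[s_P,1]$ lies in $\Gamma$ and Theorem \ref{thm2}$(i)$ yields the desired lower bound. Statement $(ii'')$ is Theorem \ref{thm2}$(ii)$ verbatim. I do not anticipate any genuine obstacle here, since the deep probabilistic content has already been absorbed into Theorems \ref{thm1} and \ref{thm2}; what remains is just the elementary monomial bookkeeping used to locate $s_P$ and to establish the strict positivity of $\gamma_P$ on $(s_P,1)$.
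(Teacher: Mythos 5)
Your proof is correct and follows the same route as the paper: identify $\Gamma$ explicitly from \eqref{eq2} and invoke Theorems \ref{thm1} and \ref{thm2}. You go somewhat beyond the paper's two-sentence proof by carefully verifying $\mbox{supp}\,\rho_P=[s_P,1)$ via the positivity of $\xi'''$ on $(0,1]$ (and correcting a minor slip in the paper, which writes $\Gamma=[-1,1]$ for $h=0$ where $\Gamma=[0,1]$ is meant, though the conclusion about $u\in[-1,1]$ is the same since $\Gamma$ governs $|u|$); this added bookkeeping is consistent with the paper's cited description of the FRSB minimizer in \eqref{eq2}.
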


		This theorem shows that for any $\eta >0$, the overlap attains any possible value of $[s_P,1]$ in the set $\mathcal L(\eta)$. As far as we know, this is the first rigorous result in spherical models that matches the physicists' expectation that, in models with FRSB, local maxima of the Hamiltonian $H_{N}$ slightly below the maximum energy should be separated by only $O(1)$ barriers. More precisely, the barrier between two local maxima $\sigma$ and $\sigma'$ is defined as
		$$
		B_N(\sigma,\sigma'):=\inf_{\tau}\max_{0\leq t\leq 1}\bigl(|H_N(\sigma)-H_N(\tau(t))|,|H_N(\sigma')-H_N(\tau(t))|\bigr),
		$$
		where the infimum is taken over all continuous paths $\tau:[0,1]\mapsto S_N$ with $\tau(0)=\sigma$ and $\tau(1)=\sigma'.$ For FRSB models, it is expected that $B_N/N^c\rightarrow 0$ for any $c>0,$ see \cite[Section 9]{CK} for detail. This is in deep contrast with 1RSB models where local maxima near the maximum energy are orthogonal to each other with $B_N=O(N)$ barrier separating them, see Theorem \ref{thm:1RSB:os} below.

	\subsubsection{1RSB solution}
	
	Let $z\geq 0$ be the unique solution to
	\begin{align}\label{1RSB:eq0}
	\frac{1}{\xi'(1)}&=\frac{1+z}{z^2}\log (1+z)-\frac{1}{z}.
	\end{align}
	Note that the right hand-side is a strictly decreasing function and decreases from $1/2$ to 0 as $z$ tends from 0 to infinity. Since $$2=2\sum_{p\in 2\mathbb{N}}c_p\leq \sum_{p\in2\mathbb{N}}pc_p=\xi'(1),$$ the solution $z$ to \eqref{1RSB:eq0} is ensured. Also note that $z=0$ if and only if $\xi(s)=s^2,$ the spherical SK model. If $c_p\neq 0$ for at least one $p\geq 4$, then $z>0$ and we define \begin{align}
	\label{1RSB:lem-1:eq1}
	\zeta(s)&=\xi(s)+\xi'(s)(1-s)+\frac{\xi'(s)}{z}-\frac{(1+z)\xi'(1)}{z^2}\log\Bigl(1+\frac{z\xi'(s)}{\xi'(1)}\Bigr).
	\end{align}
	Here $\zeta(0)=\zeta(1)=0.$ For $h=0,$ our main result below gives a full characterization of the mixture parameter $\xi$ for the 1RSB Parisi measure with $\mbox{supp}\rho_P=\{0\}.$
	
	\begin{theorem}[1RSB]\label{thm:1RSB}
		Assume $h=0.$ The Parisi measure $\rho_P$ is 1RSB with $\mbox{supp}\rho_P=\{0\}$ if and only if $c_p>0$ for at least one $p\geq 4$ and 
		\begin{align}\label{thm:1RSB:eq1}
		\zeta(s)&\leq  0, \,\,\forall s\in[0,1].
		\end{align}
		In this case, 
		\begin{align}
		\begin{split}
		\label{thm1:1RSB:eq1}
		\nu_P(ds)&=\frac{z1_{[0,1)}(s)ds}{\sqrt{(1+z)\xi'(1)}}+\frac{\delta_{\{1\}}(ds)}{\sqrt{(1+z)\xi'(1)}},
		\end{split}\\
		\begin{split}
		\label{thm1:1RSB:eq2}
		\ME&=\frac{\xi'(1)+z}{\sqrt{(1+z)\xi'(1)}}.
		\end{split}
		\end{align}
	\end{theorem}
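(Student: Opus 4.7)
The strategy is to exploit the strict convexity of the Crisanti--Sommers functional $\mathcal{Q}$ on $\mathcal{K}$, use first-order optimality to pin down the candidate minimizer, and then translate these conditions into the algebraic form $\zeta(s)\le 0$.

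First I would carry out the optimization within the family of 1RSB measures with support at $\{0\}$, $\nu(ds)=A\,1_{[0,1)}(s)\,ds+\Delta\,\delta_{\{1\}}(ds)$. Using $\xi(1)=1$ and evaluating the two integrals defining $\mathcal{Q}(\nu)$ by hand yields
\[
\mathcal{Q}(\nu)=\frac{1}{2}\Bigl[A+\Delta\xi'(1)+\tfrac{1}{A}\log\bigl(1+A/\Delta\bigr)\Bigr].
\]
Setting $\partial_{\Delta}\mathcal{Q}=0$ gives $\Delta^{2}(1+z)\xi'(1)=1$ with $z:=A/\Delta$, and $\partial_{A}\mathcal{Q}=0$ reduces to exactly equation~\eqref{1RSB:eq0}. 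Substituting back proves \eqref{thm1:1RSB:eq1} and \eqref{thm1:1RSB:eq2}. For the $c_p$ condition: if $\xi(s)=s^{2}$ then Proposition~\ref{prop0}$(i)$ gives $s_{P}=1$, incompatible with $\mathrm{supp}\,\rho_{P}=\{0\}$, so $c_{p}>0$ for some $p\ge 4$ is necessary; conversely, this condition guarantees that the unique solution $z>0$ of \eqref{1RSB:eq0} exists and that the explicit formulas are well-defined.

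Next I would establish the global first-order characterization of $\nu_{P}$ over $\mathcal{K}$. For any admissible signed-measure perturbation $\mu$ (so that $\nu_{P}+\varepsilon\mu\in\mathcal{K}$ for small $\varepsilon>0$), Fubini gives
\[
D\mathcal{Q}(\nu_{P})[\mu]=\tfrac12\int_{0}^{1}\Phi(s)\,\mu(ds),\qquad \Phi(s):=\xi'(s)-\int_{0}^{s}\frac{dq}{\nu_{P}((q,1])^{2}}.
\]
For our candidate, $\nu_{P}((q,1])=\Delta(1+z-zq)$ gives $\Phi(s)=\xi'(s)-s\xi'(1)/(1+z-zs)$, and in particular $\Phi(0)=\Phi(1)=0$. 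Admissible $\mu$ are characterized by requiring the density perturbation $\gamma_{\mu}$ to be nondecreasing on $[0,1)$ plus an unconstrained weight at $\{1\}$; writing $\gamma_{\mu}(s)=\int_{0}^{s}d\rho_{\mu}(r)$ for some nonnegative $\rho_{\mu}$, the KKT conditions become
\[
\Phi(1)=0,\qquad \int_{0}^{1}\Phi(s)\,ds=0,\qquad \int_{s_{0}}^{1}\Phi(s)\,ds\ge 0\ \text{ for all }s_{0}\in[0,1].
\]
By strict convexity of $\mathcal{Q}$, these are both necessary and sufficient for $\nu_{P}$ to be the unique minimizer.

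The final step is to re-express the integral condition as $\zeta\le 0$. A direct differentiation yields the identity
\[
\zeta'(s)=\xi''(s)\cdot\frac{1+z-zs}{\xi'(1)+z\xi'(s)}\cdot\Phi(s),
\]
and one checks $\zeta(0)=\zeta(1)=0$ using \eqref{1RSB:eq0}. Since the factor multiplying $\Phi(s)$ is nonnegative on $[0,1]$ and strictly positive on $(0,1]$, the sign changes of $\zeta'$ and $\Phi$ coincide; a structural analysis limiting $\Phi$ to at most one interior sign change on $(0,1)$---which follows from convexity/monotonicity of $\xi'$ and of $s\mapsto s\xi'(1)/(1+z-zs)$---then yields the equivalence $\zeta(s)\le 0$ on $[0,1]$ iff $\int_{s_{0}}^{1}\Phi(s)\,ds\ge 0$ on $[0,1]$. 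Combined with the first-order analysis, this proves both directions of Theorem~\ref{thm:1RSB}. The main obstacle I anticipate is precisely this last equivalence: while the identity $\zeta'=g\Phi$ with $g\ge 0$ is suggestive, comparing two ``signed-area'' conditions is delicate when $\Phi$ could a priori oscillate, and the argument hinges on a separate structural lemma bounding the number of sign changes of $\Phi$, resting on convexity and monotonicity properties of both $\xi'$ and the rational function $s\mapsto s\xi'(1)/(1+z-zs)$.
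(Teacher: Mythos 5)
Your first-order analysis within the 1RSB family matches the paper: computing $\mathcal{Q}$ on $\nu=A\,1_{[0,1)}\,ds+\Delta\delta_{\{1\}}$ and setting $\partial_A\mathcal{Q}=\partial_\Delta\mathcal{Q}=0$ is exactly how the two stationarity equations \eqref{thm:1RSB:proof:eq11}--\eqref{thm:1RSB:proof:eq12} of the paper (imported from \cite{ArnabChen15}) arise, and they give \eqref{1RSB:eq0}, \eqref{thm1:1RSB:eq1}, \eqref{thm1:1RSB:eq2} as you say. Your global KKT characterization $\Phi(1)=0$, $\Psi(0)=0$, $\Psi\geq 0$ for $\mathcal{Q}$ over $\mathcal{K}$ (with $\Psi(s):=\int_s^1\Phi$) is also correct, and so is the identity $\zeta'(s)=\frac{\xi''(s)(1+z-zs)}{\xi'(1)+z\xi'(s)}\Phi(s)$ with $\zeta(0)=\zeta(1)=0$; this matches the computation hidden inside the paper's proof of Theorem~\ref{cor1}.

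The genuine gap is the final translation ``$\Psi\geq 0$ on $[0,1]$ iff $\zeta\leq 0$ on $[0,1]$,'' and you have correctly flagged it as the obstacle. Writing $\zeta(s)=-\int_s^1 g\Phi$ with $g(s)=\frac{\xi''(s)(1+z-zs)}{\xi'(1)+z\xi'(s)}\geq 0$, the two conditions are statements that two \emph{differently weighted} running integrals of $\Phi$ stay nonnegative, and $\Psi$ is \emph{not} equal to $-\zeta$ (you can check the two are genuinely different functions). The structural lemma you invoke---that $\Phi=\xi'-\frac{s\xi'(1)}{1+z-zs}$ has at most one interior sign change---does not follow from convexity: both $\xi'$ and $s\mapsto s\xi'(1)/(1+z-zs)$ are convex increasing and vanish at $0$, and a difference of two such functions may cross many times; no argument is given to rule this out. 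Moreover, even granting a single sign change, the equivalence between $\int_s^1\Phi\geq 0$ and $\int_s^1 g\Phi\geq 0$ needs a monotonicity property of $g$ (a Chebyshev-type rearrangement argument), and $g$ is the ratio of an increasing factor and a decreasing factor over an increasing denominator, so its monotonicity is not evident either. The paper sidesteps all of this by passing through the $\mathcal{P}$-functional: Theorem~\ref{PME} identifies the $\mathcal{Q}$-minimizer $\nu_P$ with the $\nu$-component of the $\mathcal{P}$-minimizer, Proposition~\ref{prop1} gives the global optimality criterion $\bar f\geq 0$ in the $\mathcal{P}$-picture, and Lemma~\ref{lem0} shows the \emph{exact} identity $\bar f(s)=-\zeta(s)$ for the 1RSB ansatz, so the translation to $\zeta\leq 0$ is immediate with no comparison of weighted integrals needed. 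To repair your proof you would either need to prove the sign-change lemma for $\Phi$ plus the rearrangement step, or (cleaner) introduce the $\mathcal{P}$-functional bridge as the paper does.
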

	
	If the inequality \eqref{thm:1RSB:eq1} is strict, we obtain a description of the energy landscape of the model around the maximum energy. 
	
	\begin{theorem}[Orthogonal structure]\label{thm:1RSB:os}
		Let $h=0$. If   \begin{align}\label{thm:1RSB:os:eq1}
		\zeta(s)&<0, \,\,\forall s\in(0,1),
		\end{align} 
		then for any $\varepsilon>0$, there exist $\eta,K>0$ such that for all $N\geq 1,$
		\begin{align}\label{1RSB:eq1}
		\mathbb{P}_N\bigl(\eta,[-1+\varepsilon,-\varepsilon]\cup[\varepsilon,1-\varepsilon]\bigr)&\leq Ke^{-\frac{N}{K}}.
		\end{align}
	\end{theorem}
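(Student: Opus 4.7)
The plan is to execute the coupled-maximum-energy strategy outlined in Section 1.1.3 together with the algebraic characterization of 1RSB Parisi measures from Theorem \ref{thm:1RSB}. First, since $h=0$, the Hamiltonian is symmetric, $H_N(-\sigma)=H_N(\sigma)$, so the pair $(\sigma^1,\sigma^2)$ with overlap $u\in[-1+\varepsilon,-\varepsilon]$ is in bijection with $(\sigma^1,-\sigma^2)$ with overlap $-u\in[\varepsilon,1-\varepsilon]$, and the two pairs have identical energy sums. Hence it suffices to prove \eqref{1RSB:eq1} for $A=[\varepsilon,1-\varepsilon]$.

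Second, I would analyze the asymptotic coupled maximum energy at a prescribed overlap,
\begin{align*}
\MCE(\{q\}):=\lim_{N\to\infty}\frac{1}{N}\e\max_{R_{1,2}=q}\bigl(H_N(\sigma^1)+H_N(\sigma^2)\bigr),\qquad q\in(0,1),
\end{align*}
and establish the bound
\begin{align*}
\MCE(\{q\})\le 2\,\ME+\zeta(q),
\end{align*}
with $\zeta$ as in \eqref{1RSB:lem-1:eq1}. The idea is to use a Guerra-type / Crisanti-Sommers interpolation for two spherical replicas constrained to overlap $q$: one plugs in the 1RSB minimizer \eqref{thm1:1RSB:eq1} of the single-replica problem, augmented by a Dirac mass at $q$ encoding the coupling, takes the zero-temperature limit, and then uses the defining relation \eqref{1RSB:eq0} for $z$ to simplify the resulting variational expression to $2\,\ME+\zeta(q)$. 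The function $\zeta$ is precisely the one that appears in the 1RSB characterization, and the strict inequality hypothesis \eqref{thm:1RSB:os:eq1} is tailor-made so that this bound is strictly less than $2\,\ME$ for every $q\in(0,1)$.

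Third, since $\zeta$ is continuous and $[\varepsilon,1-\varepsilon]\subset(0,1)$ is compact, \eqref{thm:1RSB:os:eq1} yields $\delta:=-\sup_{q\in[\varepsilon,1-\varepsilon]}\zeta(q)>0$, hence
\begin{align*}
\MCE_N\bigl([\varepsilon,1-\varepsilon]\bigr)\le 2\,\ME-\delta+o(1).
\end{align*}
Choose $\eta=\delta/8$. On the event in \eqref{1RSB:eq1}, there exist $\sigma^1,\sigma^2$ with $R_{1,2}\in[\varepsilon,1-\varepsilon]$ and $H_N(\sigma^i)/N\ge\ME-\eta$, which forces
\begin{align*}
\frac{1}{N}\max_{R_{1,2}\in[\varepsilon,1-\varepsilon]}\bigl(H_N(\sigma^1)+H_N(\sigma^2)\bigr)\ge 2\,\ME-\tfrac{\delta}{4}.
\end{align*}
Applying the Borell--TIS concentration inequality to the Gaussian suprema $\max_{R_{1,2}\in[\varepsilon,1-\varepsilon]}(H_N(\sigma^1)+H_N(\sigma^2))/N$ and $\ME_N$, both concentrate around their means at rate $\exp(-cN)$; the displayed lower bound is then incompatible with the upper bound on $\MCE_N([\varepsilon,1-\varepsilon])$ except on an event of probability $Ke^{-N/K}$, giving \eqref{1RSB:eq1}.

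The main obstacle is the second step: deriving the upper bound $\MCE(\{q\})\le 2\,\ME+\zeta(q)$ and matching the resulting variational expression to the explicit algebraic identity \eqref{1RSB:lem-1:eq1}. While zero-temperature Crisanti--Sommers bounds for a single replica are well understood, the two-replica version with a hard overlap constraint on the sphere requires careful handling of the Lagrange multiplier for $R_{1,2}=q$ and the two-level Parisi ansatz, and the simplifications that collapse the resulting supremum onto $2\,\ME+\zeta(q)$ rest precisely on the extremality condition \eqref{1RSB:eq0} that characterizes the 1RSB regime. All remaining steps---the symmetry reduction, the compactness argument, and the Gaussian concentration---are comparatively routine.
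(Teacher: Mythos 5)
Your high-level strategy matches the paper's: bound the coupled maximum energy strictly below $2\,\ME$ uniformly over $|u|\in[\varepsilon,1-\varepsilon]$, then apply Borell--TIS concentration and a covering argument. However, the step you yourself flag as the ``main obstacle'' is exactly where the proof lives, and the route you propose there is not the paper's and is not obviously correct.

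The paper does not prove the identity $\MCE(\{q\})\le 2\,\ME+\zeta(q)$, nor does it encode the coupling by adding a Dirac mass at $q$. Instead, it works with the two-replica RSB bound (Theorem \ref{lem5}), takes the Lagrange multiplier $\lambda=0$, and perturbs the 1RSB optimizer $\nu$ from Lemma \ref{lem0} \emph{multiplicatively} on $[0,|u|)$:
\begin{align*}
\nu_m(ds)=A\bigl(m\,1_{[0,a)}(s)+1_{[a,1)}(s)\bigr)\,ds+\Delta\,\delta_{\{1\}}(ds),\qquad a=|u|,\ m\in(0,2),
\end{align*}
which respects the structural constraint $\gamma_1(a-)\le 2\gamma_2(a)$ of $\mathcal{K}_a$. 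It then computes $\mathcal{P}_u(B,0,\nu_1)=2\,\ME$ and, crucially, establishes the first-order identity
\begin{align*}
\partial_m\mathcal{P}_u(B,0,\nu_m)\big|_{m=1}=A\,\zeta(a),
\end{align*}
which is a specific and nonobvious algebraic fact tying the coupled functional to $\zeta$. Since $\zeta(a)<0$ on the compact set, the mean value theorem gives some $m$ near $1$ and a uniform gap $\eta>0$ with $\mathcal{P}_u(B,0,\nu_m)\le 2\,\ME-4\eta$ for all such $u$; note this yields a gap \emph{proportional to but not equal to} $\zeta(q)$, so the exact inequality $\MCE(\{q\})\le 2\,\ME+\zeta(q)$ you posit is stronger than what the paper proves and does not follow from a first-order perturbation. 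Moreover, a Dirac mass at $q$ is a different perturbation direction in $\mathcal{K}_a$ than a density scaling on $[0,a)$, and there is no reason to expect it to produce the $\zeta$-identity. Your symmetry reduction, the compactness/continuity step, and the concentration argument (and the covering argument from the proof of Theorem \ref{thm2}$(ii)$, which you should invoke to pass from pointwise bounds in $u$ to the full interval) are all fine, but without the derivative identity $\partial_m\mathcal{P}_u|_{m=1}=A\zeta(a)$ and the correct choice of perturbation family, the proof has a genuine gap at its core.
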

	
	Theorem \ref{thm:1RSB:os} reads that with overwhelmingly probability, any two nonparallel spin configurations around the maximum energy are nearly orthogonal to each other. In other words, if one  wishes to travel between any two such spin configurations along a path on the energy landscape, then one unavoidably needs to climb down to a lower energy level at some point. Furthermore, recall the set $\mathcal{L}(\eta)$ from \eqref{eq:mainset}. Theorem \ref{thm:1RSB:os} combined with Theorem \ref{thm1.1} ($s_P=0$) implies that the number of nearly orthogonal components of $\mathcal{L}(\eta)$ is at least of exponential order.			
	
	The assumption \eqref{thm:1RSB:os:eq1} is numerically easy to check. Nonetheless, the following theorem provides a simple sufficient criterion for \eqref{thm:1RSB:os:eq1}.

	\begin{theorem}\label{cor1}
		Let $h=0.$ If 
		\begin{align}
		\label{cor:eq1}
		\xi'(1)>\xi''(0)(1+z)
		\end{align}
		and
		\begin{align}
		\label{cor:eq2}
		\frac{s}{\xi'(s)}\,\,\mbox{is convex on $(0,1)$},
		\end{align} then the strict inequality \eqref{thm:1RSB:os:eq1} holds.
	\end{theorem}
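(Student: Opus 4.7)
The plan is to exhibit the shape of $\zeta$ on $[0,1]$ as a single well: strictly decreasing on some initial interval $[0,s_{0}]$ and strictly increasing on $[s_{0},1]$. Since $\zeta(0)=0$ is immediate and $\zeta(1)=0$ follows from the defining equation \eqref{1RSB:eq0} of $z$, such a shape forces $\zeta(s)<0$ for all $s\in(0,1)$. It thus suffices to show that $\zeta'$ changes sign exactly once on $(0,1)$, from negative to positive.

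Differentiating directly, for $s\in(0,1]$ one obtains
$$
\zeta'(s)=\xi''(s)\cdot\frac{g(s)}{\xi'(1)+z\,\xi'(s)},\qquad g(s):=\xi'(s)\bigl(1+z(1-s)\bigr)-s\,\xi'(1).
$$
Because $\xi''(s)>0$ on $(0,1]$ and the denominator is positive, the sign of $\zeta'(s)$ agrees with that of $g(s)$. For $s\in(0,1)$ I factor $g(s)=\xi'(s)\,G(s)$, where
$$
G(s):=1+z(1-s)-\xi'(1)\cdot\frac{s}{\xi'(s)}.
$$
Hypothesis \eqref{cor:eq2} makes $s/\xi'(s)$ convex on $(0,1)$, hence $G$ is concave on $(0,1)$. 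Clearly $G(1)=0$. At the left endpoint, if $\xi''(0)>0$ then $s/\xi'(s)\to 1/\xi''(0)$ as $s\to 0^{+}$, and \eqref{cor:eq1} yields $G(0^{+})=(1+z)-\xi'(1)/\xi''(0)<0$; if $\xi''(0)=0$ then $s/\xi'(s)\to +\infty$ and $G(0^{+})=-\infty$. Either way, $G(0^{+})<0$ strictly.

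The main step is to show that $G$ takes a strictly positive value somewhere in $(0,1)$. Suppose, to the contrary, that $G\leq 0$ throughout $(0,1)$. Then $g\leq 0$ and $\zeta'\leq 0$, so $\zeta$ is non-increasing on $[0,1]$; combined with $\zeta(0)=\zeta(1)=0$, this forces $\zeta\equiv 0$ and hence $\zeta'\equiv 0$. Since $\xi''(s)>0$ on $(0,1]$ (the pure SK case $\xi(s)=s^{2}$ is ruled out by \eqref{cor:eq1}), it follows that $g\equiv 0$ and thus $G\equiv 0$ on $(0,1)$, contradicting $G(0^{+})<0$. Hence $G(s^{*})>0$ for some $s^{*}\in(0,1)$. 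Because $G$ is concave and real-analytic on $(0,1)$ (analyticity inherited from $\xi$) with $G(0^{+})<0$, $G(s^{*})>0$, and $G(1)=0$, it has at most two zeros in $[0,1]$, one of which is $s=1$; the intermediate value theorem then supplies a unique zero $s_{0}\in(0,s^{*})$, and the sign pattern $G<0$ on $(0,s_{0})$ and $G>0$ on $(s_{0},1)$ is forced.

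Transferring this sign pattern back through $g$ and $\zeta'$, one concludes $\zeta'<0$ on $(0,s_{0})$ and $\zeta'>0$ on $(s_{0},1)$, so $\zeta$ is strictly decreasing on $[0,s_{0}]$ and strictly increasing on $[s_{0},1]$. Together with $\zeta(0)=\zeta(1)=0$ this gives $\zeta(s)<0$ for every $s\in(0,1)$, which is \eqref{thm:1RSB:os:eq1}. The most delicate point is the contradiction argument ruling out $G\leq 0$ on $(0,1)$; it leverages the boundary values $\zeta(0)=\zeta(1)=0$ together with the strict inequality $G(0^{+})<0$ supplied by \eqref{cor:eq1}.
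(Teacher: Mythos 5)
Your proof is correct and takes essentially the same route as the paper's: you compute $\zeta'$ and control its sign through the concave auxiliary function $G$, which equals the paper's convex function $\phi$ multiplied by $-\xi'(1)$, using \eqref{cor:eq2} for concavity and \eqref{cor:eq1} to get $G(0^+)<0$ together with the automatic identity $G(1)=0$. The paper's writeup is terser and leaves implicit the existence of a sign change that you supply with a short contradiction argument (and you also handle the case $\xi''(0)=0$ explicitly), but the underlying reasoning is identical.
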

	
	Note that $\xi''(0)=c_2$. If $c_2=0$, then \eqref{cor:eq1} is redundant  and one only needs to verify \eqref{cor:eq2}. An important example of Theorem~\ref{cor1} is the pure $p$-spin model, i.e., $\xi(s)=s^{p}$ for $p\geq 4.$ In this case, the maximum energy and the Parisi measure were previously computed in \cite{ABC} and \cite[Proposition 3]{ArnabChen15}, which agree with \eqref{thm1:1RSB:eq1}. 
	
	\begin{remark}
		\rm The condition \eqref{cor:eq2} should be compared with the well-known criterion of testing 1RSB Parisi measure at both positive and zero temperatures  in \cite{CS,JT,Tal06}, where it was shown that the Parisi measure is either RS or 1RSB if $1/\sqrt{\xi''}$ is convex in $(0,1)$. Reportedly, there exists some $\xi$, which satisfies \eqref{cor:eq2}, but  $1/\sqrt{\xi''}$ is not convex on $(0,1)$. However, it is not clear to us whether the convexity of $1/\sqrt{\xi''}$ always implies that of $s/\xi'(s).$ 
	\end{remark}
	
	It is easy to construct models satisfying Theorem \ref{cor1}. The corollary below deals with mixture of two spin interactions. 		
	
	\begin{corollary}\label{cor2}
		Consider the spherical $(p+q)$-spin model with $h=0$ and $p,q\geq 4,$ i.e., $$
		\xi(s)=cs^p+(1-c)s^q$$
		for some $c\in[0,1].$
		If
		\begin{align}\label{cor2:eq2}
		2pq+4\geq 3(p+q)+(p-q)^2,
		\end{align}  
		then both conditions \eqref{cor:eq1} and \eqref{cor:eq2} are valid.
	\end{corollary}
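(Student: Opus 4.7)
My plan is to verify the two hypotheses of Theorem \ref{cor1} separately. Condition \eqref{cor:eq1} will be automatic: since $p,q\geq 4$, the coefficient $c_2$ of $s^2$ in $\xi$ vanishes, hence $\xi''(0)=0$, while $\xi'(1)=cp+(1-c)q>0$. As the remark after Theorem \ref{cor1} already notes, in this situation \eqref{cor:eq1} is redundant and only the convexity condition \eqref{cor:eq2} requires work.

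For \eqref{cor:eq2} I would write $s/\xi'(s)=1/\phi(s)$ with
$$\phi(s)=\frac{\xi'(s)}{s}=as^{p-2}+bs^{q-2}, \qquad a:=cp,\ b:=(1-c)q.$$
A direct differentiation gives $(1/\phi)''=\bigl(2(\phi')^2-\phi\phi''\bigr)/\phi^3$, so, since $\phi>0$ on $(0,1)$, convexity of $s/\xi'(s)$ is equivalent to the single pointwise inequality $2(\phi'(s))^2\geq \phi(s)\phi''(s)$ on $(0,1)$.

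The next step is to expand $2(\phi')^2-\phi\phi''$ as a polynomial in $s$. One finds three monomials $s^{2p-6}$, $s^{p+q-6}$, $s^{2q-6}$ with coefficients
$$a^2(p-1)(p-2),\qquad ab\bigl[4(p-2)(q-2)-(p-2)(p-3)-(q-2)(q-3)\bigr],\qquad b^2(q-1)(q-2).$$
The outer two coefficients are manifestly nonnegative because $p,q\geq 4$ and $a,b\geq 0$. A short algebraic simplification identifies the bracket in the middle coefficient with
$$4pq-p^2-q^2-3p-3q+4 \;=\; 2pq+4-3(p+q)-(p-q)^2,$$
which is precisely the quantity made nonnegative by hypothesis \eqref{cor2:eq2}. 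Hence all three coefficients are nonnegative, so $2(\phi'(s))^2-\phi(s)\phi''(s)\geq 0$ on $(0,1)$, and \eqref{cor:eq2} follows.

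There is no real obstacle here beyond the bookkeeping in the expansion: the argument is entirely algebraic, and the only slightly delicate point is recognizing that the cross-term coefficient collapses exactly to the expression appearing in \eqref{cor2:eq2}. The degenerate cases $c\in\{0,1\}$ (where either $a$ or $b$ vanishes) and $p=q$ (where there is only one monomial) are trivially handled since only the nonnegative extremal coefficients survive.
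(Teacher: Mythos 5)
Your proof is correct and follows essentially the same route as the paper: reduce convexity of $s/\xi'(s)$ to nonnegativity of $2(\phi')^2-\phi\phi''$, expand in powers of $s$, and observe that the squared terms are nonnegative while the cross-term coefficient collapses exactly to $2pq+4-3(p+q)-(p-q)^2$. (Incidentally, your $\phi^3$ in the denominator is the correct exponent; the paper's displayed $\psi(s)^2$ is a harmless typo since only the sign matters.)
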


	\begin{remark}\label{SubagRem}\rm Several authors studied the energy landscape of the $p$-spin model in recent years. The averaged complexity of critical points of $H_{N}$ was found in Auffinger-Ben Arous-\v{C}er{n}\'y \cite{ABC} and in Auffinger-Ben Arous \cite{AB}. Later, for the pure $p$-spin model, concentration of the complexity of the local maxima was established by Subag \cite{S1}. The energy landscape of the pure $p$-spin model around the maximum energy coincides with the picture described above. Theorem~\ref{thm:1RSB:os} works not only for the pure $p$-spin model, but also for any mixture such that $\zeta(s)>0$.  For an example of $\xi$ that involves infinitely many interactions in $X_N$, one could take $$
		\xi(s)=\frac{e^{s^2}-1-s^2}{e-2}.$$ 
		Thus, we recover and extend the orthogonality structure of local maxima discovered in \cite{S1} (see also \cite[Corollary 13]{S2}) to other models.
	\end{remark}
	
	\begin{remark} \rm It would be interesting to decide if condition $\zeta(s) <0$ coincides with the definition of pure-like models introduced in \cite{ABC} and also investigated in \cite{JT}.		
	\end{remark}

	\section{Parisi's formula and RSB bound for the free energies}
	
	In this section we review some well-known results from Talagrand \cite{Tal06} on the Parisi formula for the free energy and the Guerra-Talagrand RSB bound for the coupled free energy with overlap constraint. These will be of great use in the next section, where we develop their analogues at zero temperature. For any inverse temperature $\beta>0$, define the free energy by
	\begin{align*}
	F_{N,\beta} &=\frac{1}{N\beta}\e\log\int_{S_N}\exp \beta H_N(\sigma)\lambda_N(d\sigma),
	\end{align*}
	where $\lambda_N$ is the uniform probability measure on $S_N.$
	For any measurable subset $A$ of $[-1,1],$ we set the coupled free energy as 
	\begin{align*}
	\CF_{N,\beta} (A)&=\frac{1}{N\beta}\e\log\int_{R_{1,2}\in A}\exp \beta\bigl( H_N(\sigma^1)+H_N(\sigma^2)\bigr)\lambda_N(d\sigma^1)\times\lambda_N(d\sigma^2).
	\end{align*} 
	Let $\mathcal{M}$ be the space of all $(b,x)$ for $b\in\mathbb{R}$ and $x$ a p.d.f. on $[0,1]$ such that $$
	\max\Bigl(1,\int_0^1\beta^2\xi''(s)x(s)ds\Bigr)<b.
	$$
	Define the Parisi functional by
	\begin{align}\label{parisi}
	\mathcal{P}_\beta (b,x)=\frac{1}{2\beta}\Bigl(\frac{\beta^2h^2}{b-d_\beta^x(0)}+\int_0^1\frac{\beta^2\xi''(q)}{b-d_\beta^x(q)}dq+b-1-\log b-\int_0^1q\beta^2\xi''(q)x(q)dq\Bigr)
	\end{align}
	for any $(b,x)\in\mathcal{M},$ where $$
	d_\beta^x(q):=\int_q^1\beta^2\xi''(s)x(s)ds.$$ The Parisi formula for the free energy states that
	\begin{theorem}[Parisi's formula for the free energy]\label{PF}
		\begin{align}\label{PF:eq1}
		\lim_{N\rightarrow\infty}F_{N,\beta} &=\inf_{(b,x)\in \mathcal{M}}\mathcal{P}_\beta (b,x).
		\end{align}
	\end{theorem}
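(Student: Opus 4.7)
The plan is to prove the two matching bounds
$$\liminf_{N\to\infty} F_{N,\beta}\;\geq\;\inf_{(b,x)\in\mathcal{M}} \mathcal{P}_\beta(b,x) \quad\text{and}\quad \limsup_{N\to\infty} F_{N,\beta}\;\leq\;\inf_{(b,x)\in\mathcal{M}} \mathcal{P}_\beta(b,x).$$
The upper bound is obtained via the Guerra replica-symmetry-breaking interpolation adapted to the sphere; the lower bound combines the Aizenman-Sims-Starr cavity scheme with the Ghirlanda-Guerra identities and Panchenko's ultrametricity theorem.

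For the upper bound, fix $(b,x)\in\mathcal{M}$. A standard density argument reduces to the case where $x$ is a step function with jumps at $0=q_0<q_1<\cdots<q_{k+1}=1$ and values $m_0<m_1<\cdots<m_k$. I build a Ruelle probability cascade $(v_\alpha)_{\alpha\in\mathbb{N}^k}$ with Poisson-Dirichlet weights of parameters $m_i$ and introduce an auxiliary Gaussian field $Y(\sigma,\alpha)$ on $\mathbb{R}^N\times\mathbb{N}^k$ whose covariance encodes both the mixture $\xi$ and the hierarchy $\{q_i\}$. The sphere is relaxed to $\mathbb{R}^N$ via a Gaussian multiplier $e^{-b|\sigma|^2/2}$; the saddle point in the resulting Laplace integral over the radial direction fixes $b$, and this is the origin of the Lagrange parameter appearing in $\mathcal{P}_\beta$. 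Interpolating by $H_{N,t}=\sqrt{t}\,H_N+\sqrt{1-t}\,Y$ and computing $\frac{d}{dt}F_{N,\beta,t}$ via Gaussian integration by parts, the convexity of $\xi$ (guaranteed by $c_p\geq 0$) yields a non-positive derivative. Evaluating at $t=0$ recovers $F_{N,\beta}$, and at $t=1$ the sphere and the cascade decouple; a direct computation of the decoupled free energy gives $\mathcal{P}_\beta(b,x)$, whence the upper bound.

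For the lower bound, decompose $NF_{N,\beta}$ as a telescoping sum of one-spin cavity increments and study any subsequential limit. After adding a small polynomial Gaussian perturbation that does not affect the limiting free energy, the asymptotic overlap array of i.i.d.\ samples from the perturbed Gibbs measure satisfies the Ghirlanda-Guerra identities. Panchenko's ultrametricity theorem then forces this array to coincide with the overlap array of a Ruelle cascade associated to some measure $\zeta$ on $[0,1]$. The cavity increment on the sphere is evaluated through a one-dimensional Laplace asymptotic in a radial variable whose saddle determines the Lagrange multiplier $b$; the resulting limit is exactly $\mathcal{P}_\beta(b,x)$ for the pair $(b,x)$ associated to $\zeta$, yielding the matching lower bound.

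The main obstacle is the lower bound. The upper bound is a direct spherical adaptation of the interpolation of Talagrand \cite{Tal06} and uses only convexity of $\xi$ together with Gaussian integration by parts. The lower bound is considerably more subtle: one must design a polynomial perturbation of $H_N$ whose coupling strengths are carefully scaled so that the Ghirlanda-Guerra identities pass to the limit while the limiting free energy is unchanged, and the cavity step must be performed inside a Laplace-type asymptotic to isolate the radial saddle that produces $b$. Panchenko's ultrametricity machinery then applies, since it depends only on the structure of the asymptotic overlap array and not on the underlying configuration space.
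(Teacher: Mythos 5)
The statement you are asked to prove is not proved in this paper at all: the authors record Theorem~\ref{PF} as a cited result, attributing it to Talagrand \cite{Tal06} and to Chen \cite{Chen13} for general mixtures, and immediately move on to extract the zero-temperature version (Theorem~\ref{PME}) by a $\beta\to\infty$ limit. There is therefore no ``paper's own proof'' to compare against; the most one can do is check your sketch against the strategy of the cited references.

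Your outline is in fact essentially the strategy of Chen \cite{Chen13}: Guerra's RSB interpolation with a Ruelle cascade for the upper bound, and the Aizenman--Sims--Starr cavity scheme combined with a Gaussian perturbation enforcing the Ghirlanda--Guerra identities and Panchenko's ultrametricity theorem for the lower bound, with the spherical constraint absorbed into a radial Lagrange-multiplier variable $b$ via a one-dimensional Laplace argument. Note that this is not Talagrand's original argument in \cite{Tal06}, which predates Panchenko's ultrametricity theorem and uses a more hands-on analysis of the Parisi measure for the lower bound; your sketch follows the more modern route. One small slip: with the interpolation $H_{N,t}=\sqrt{t}\,H_N+\sqrt{1-t}\,Y$ you wrote that $t=0$ recovers $F_{N,\beta}$ and $t=1$ decouples, but it is the other way around: at $t=1$ the auxiliary field $Y$ disappears and one recovers $F_{N,\beta}$, while at $t=0$ the system decouples from $H_N$ and one evaluates the cascade, so the non-positive derivative gives $F_{N,\beta}=F(1)\le F(0)=\mathcal P_\beta(b,x)$. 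Beyond that inversion the sketch is sound, though at the level of detail given it is a roadmap rather than a proof: the radial Laplace analysis producing the $b-1-\log b$ term, the verification that the Gaussian perturbation leaves the free energy unchanged while forcing the GG identities, and the matching of the ASS lower bound to the functional $\mathcal P_\beta$ over the \emph{same} class $\mathcal M$ each require substantial work that the references supply.
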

	The Parisi formula was rigorously established by Talagrand \cite{Tal06} and extended to general mixture of the model by Chen \cite{Chen13}. 
	In \cite{Tal06}, it was known that the optimization problem on the right-hand side has a unique minimizer, denoted by $(b_{\beta,P} ,x_{\beta,P}).$ The probability measure $\mu_{\beta,P}$ induced by $x_{\beta,P}$ is called the Parisi measure. 
	
	The coupled free energy can be controlled by a two-dimensional extension of the Parisi functional. For $a\in [0,1]$, let $\mathcal{M}_a$ be the collection of all $(b,\lambda,x)$ such that $b,\lambda\in\mathbb{R}$ and 
	$$
	\max\Bigl(1,|\lambda|+\int_0^1\beta^2\xi''(s)x(s)ds\Bigr)<b,
	$$
	where $x$ is a function of the form $$
	x(s)=1_{[0,a)}(s)x_1(s)+1_{[a,1]}(s)x_2(s),\,\,s\in[0,1]
	$$
	for $x_1$ and $x_2$ two nonnegative and nondecreasing functions with right continuity on $[0,a)$ and $[a,1]$ respectively and $x_1(a-)\leq 2x_2(a)$ and $x_2(1)=1.$ Let $u\in [-1,1]$ be fixed. Set $\iota=1$ if $u\geq 0$ and $\iota=-1$ if $u<0.$ 
	Define
	\begin{align*}
	\mathcal{P}_{\beta,u}(b,\lambda,x) &=\frac{T_{\beta,u}(b,\lambda,x)}{\beta}+\frac{1}{\beta}\left\{
	\begin{array}{ll}
	\frac{\beta^2h^2}{b-\lambda- d_\beta^x(0)},&\mbox{if $u\in[0,1]$},\\
	\frac{\beta^2h^2}{b-\lambda-d_{\beta}^x(|u|)},&\mbox{if $u\in[-1,0)$},
	\end{array}\right.
	\end{align*}
	where
	\begin{align*}
	T_{\beta,u}(b,\lambda,x)&:=\log \sqrt{\frac{b^2}{b^2-\lambda^2}}+ \int_0^{|u|} \frac{\beta^2\xi''(q)}{b-\iota \lambda-d_\beta^x(q)} dq \\
	&\quad +\frac{1}{2} \int_{|u|}^1 \frac{\beta^2\xi''(q)}{b-\lambda-d_\beta^x(q)} dq + \frac{1}{2} \int_{|u|}^1 \frac{\beta^2\xi''(q)}{b+\lambda-d_\beta^x(q)} dq \\
	&\quad-\lambda u +b-1- \log b - \beta^2\int_0^1 q\xi''(q) x(q)dq.
	\end{align*}
	The following theorem gives the Guerra-Talagrand RSB bound for the coupled free energy.  
	\begin{theorem}[RSB bound for the coupled free energy] Let $u\in [-1,1].$ For any $(b,\lambda, x)\in \mathcal{M}_{|u|}$, we have
		\label{RSBB}
		\begin{align}\label{RSBB:eq1}
		\lim_{\varepsilon\downarrow 0}\limsup_{N\rightarrow\infty}\CF_{N,\beta} ((u-\varepsilon,u+\varepsilon))&\leq \mathcal{P}_{\beta,u}(b,\lambda,x).
		\end{align}
	\end{theorem}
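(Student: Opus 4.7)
The plan is to derive \eqref{RSBB:eq1} via a Lagrange-multiplier-assisted Guerra--Talagrand interpolation adapted to the two-replica constrained setting.

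First I would soften the overlap constraint. For any $\lambda\in\Reals$ and any pair $(\sigma^1,\sigma^2)$ with $R_{1,2}\in(u-\varepsilon,u+\varepsilon)$, one has $\mathbf{1}_{\{R_{1,2}\in(u-\varepsilon,u+\varepsilon)\}}\leq \exp\bigl(N\beta\lambda(R_{1,2}-u)+N\beta|\lambda|\varepsilon\bigr)$, so
\[
\CF_{N,\beta}((u-\varepsilon,u+\varepsilon))\leq \frac{1}{N\beta}\e\log\int \exp\bigl(\beta H_N(\sigma^1)+\beta H_N(\sigma^2)+N\beta\lambda R_{1,2}\bigr)\,d\lambda_N^{\otimes 2}-\lambda u+|\lambda|\varepsilon.
\]
Call the right-hand side $\Phi_{N}(\lambda,\varepsilon)$. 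Once the RSB upper bound is established for $\Phi_{N}(\lambda,0)$, sending $\varepsilon\downarrow 0$ removes the $|\lambda|\varepsilon$ error.

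Second, I would run Guerra's interpolation on this $\lambda$-tilted coupled free energy. Given $(b,\lambda,x)\in \mathcal{M}_{|u|}$ with $x=1_{[0,a)}x_1+1_{[a,1]}x_2$ and $a=|u|$, construct a Ruelle probability cascade on $[0,1]$ associated with $x$, but with the two replicas \emph{sharing a common branch} on $[0,a)$ and branching into \emph{two independent subcascades} on $[a,1]$. The inequality $x_1(a-)\leq 2x_2(a)$ is exactly what makes the weights of the two independent subcascades above level $a$ consistent with those of the single shared cascade below $a$. For $t\in[0,1]$, interpolate between $\sqrt{t}\,H_N(\sigma^\ell)$ and auxiliary Gaussian fields $\sqrt{1-t}\,Y_N(\sigma^\ell,\alpha^\ell)$ whose incremental variances along the cascade are determined by $x$, plus the usual deterministic variance correction. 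By Gaussian integration by parts together with $\e X_N(\sigma)X_N(\sigma')=N\xi(R(\sigma,\sigma'))$, the derivative of the interpolating coupled free energy reduces to a cascade average of a function of the form $-\frac{1}{2}\bigl(\xi(R_{1,2})-R_{1,2}\xi'(q)-(\xi(q)-q\xi'(q))\bigr)$, which is nonpositive because the mixture is even (so $\xi$ is convex) and the self-overlaps equal $1$ on $S_N$; thus the interpolation is monotone nonincreasing in $t$.

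Third, I would evaluate the endpoints. At $t=1$ the tilted coupled free energy $\Phi_{N}(\lambda,0)+\lambda u$ is recovered. At $t=0$, the $X_N$-contribution is gone and the remaining integral factors into a Gaussian integral over $S_N\times S_N$ of a quadratic form containing the external-field term, the coupling $\beta\lambda R_{1,2}$, and the spherical constraint $b$. A Laplace-type calculation yields $\log\sqrt{b^2/(b^2-\lambda^2)}$ as the log-determinant of the associated $2\times2$ matrix, together with the external-field term $\beta^2h^2/(b-\lambda-d_\beta^x(0))$, which becomes $\beta^2h^2/(b-\lambda-d_\beta^x(|u|))$ when $u<0$ due to the sign $\iota$ carried by $R_{1,2}$ along the shared lower branch. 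Integrating along the cascade and applying the Ruelle formulas turns the remaining averages into $\int_0^{|u|}\beta^2\xi''(q)/(b-\iota\lambda-d_\beta^x(q))\,dq$ (single shared branch) and $\frac{1}{2}\int_{|u|}^1\bigl(\beta^2\xi''(q)/(b-\lambda-d_\beta^x(q))+\beta^2\xi''(q)/(b+\lambda-d_\beta^x(q))\bigr)dq$ (two independent branches, hence the $\pm\lambda$ split). Adding the remaining deterministic pieces $b-1-\log b-\beta^2\int_0^1 q\xi''(q)x(q)dq$ and subtracting $\lambda u$ produces exactly $\beta\mathcal{P}_{\beta,u}(b,\lambda,x)$. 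Sending $N\to\infty$ and then $\varepsilon\downarrow 0$ yields \eqref{RSBB:eq1}.

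The main obstacle is the construction in Step 2: specifying the branching two-replica cascade so that (i) the $t$-derivative is nonpositive for all admissible overlap values, and (ii) the $t=0$ endpoint reproduces exactly $\mathcal{P}_{\beta,u}$ with the delicate $\pm\lambda$ splitting above level $|u|$ and the $\iota\lambda$ coupling below. The consistency condition $x_1(a-)\leq 2x_2(a)$ is imposed precisely so that the cascade weights are feasible at the branching level $a=|u|$; without it, the Ruelle construction breaks and the bound fails. This step follows the strategy of Talagrand's two-replica bound from \cite{Tal06}, transposed to the spherical setting by combining it with the spherical Crisanti--Sommers computation used in \cite{ArnabChen15}.
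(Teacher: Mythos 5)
The paper does not prove Theorem \ref{RSBB} at all: the surrounding section is explicitly a review, and the theorem is attributed to Talagrand \cite{Tal06} (with higher-dimensional and disorder-chaos variants cited to \cite{PT} and \cite{C15}). So there is no in-paper proof to compare against; the relevant comparison is with the argument in \cite{Tal06}.

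Your sketch is a faithful outline of that argument: soften the overlap constraint by a Lagrange multiplier, run a Guerra interpolation on the $\lambda$-tilted two-replica free energy using a cascade that is shared on $[0,|u|)$ and branches into two independent sub-cascades on $[|u|,1]$, use convexity of the even mixture $\xi$ to control the sign of the $t$-derivative, and evaluate the $t=0$ endpoint to produce $\mathcal{P}_{\beta,u}$, including the $\log\sqrt{b^2/(b^2-\lambda^2)}$ determinant from the $2\times 2$ spherical block, the $\iota\lambda$ on the shared branch, and the symmetric/antisymmetric $\pm\lambda$ split above $|u|$. You also correctly identify the role of $x_1(a-)\leq 2x_2(a)$ as the compatibility condition for the branching Ruelle cascade. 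Two small caveats: first, \cite{Tal06} \emph{is} the spherical model paper, so nothing needs to be ``transposed to the spherical setting'' — your closing remark overstates the gap between the Ising and spherical two-replica bounds. Second, the sketch elides the genuinely delicate part, which is verifying that the $t$-derivative stays nonpositive when the two replicas live on a branching (rather than common) cascade and that the $t=0$ Laplace computation really reproduces $T_{\beta,u}$ term by term; these are not trivial and are exactly where the details in \cite{Tal06} are needed. As a high-level roadmap, though, your proposal matches the approach the paper invokes.
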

	
	This bound was previously introduced in \cite{Tal06} in order to establish the Parisi formula \eqref{parisi}. One may find its higher dimensional extension addressing temperature chaos and ultrametricity in \cite{PT}. In addition, a version of \eqref{RSBB:eq1} devoted to chaos in disorder was developed in \cite{C15}.

	\section{Bounds for the maximum energies}
	
	We present analogous results of Theorems \ref{PF} and \ref{RSBB} for the maximum energy $\ME_N $ as well as the maximum coupled energy $\MCE_{N} .$
	
	\subsection{Parisi's formula and RSB bound for the maximum energies}
	
	Recall $\mathcal{K}$ from the paragraph before \eqref{CS}. For $\nu\in \mathcal{K}$, define
	\begin{align}\label{more:eq2}
	\hat{\nu}(s)=\int_s^1\xi''(r)\nu(dr),\,\,s\in[0,1].
	\end{align}
	Let $\mathcal{U}$ be the collection of all $(B,\nu)\in \mathbb{R}\times\mathcal{K}$ satisfying   
	$$
	\hat{\nu}(0)<B.
	$$
	Define the Parisi functional on $\mathcal{U}$ by
	\begin{align}
	\label{PME:eq2}
	\mathcal{P} (B,\nu)&=\frac{1}{2}\Bigl(\frac{h^2}{B-\hat{\nu} (0)}+\int_0^{1} \frac{\xi''(s)}{B-\hat{\nu} (s)}ds+B-\int_0^1s\xi''(s)\nu(ds)\Bigr).
	\end{align}
	Our first main result in this subsection states another expression of the maximum energy via the Parisi formula at zero temperature.
	
	\begin{theorem}[Parisi's formula for the maximum energy]
		\label{PME}
		\begin{align}
		\label{PME:eq1}
		\ME &=\inf_{(B,\nu)\in \mathcal{K}}\mathcal{P} (B,\nu).
		\end{align}
		Here the minimum of the right-hand side is uniquely achieved by $(B_P ,\nu_P )\in \mathcal{K}$, where $\nu_P$ is the minimizer in the Crisanti-Sommers formula \eqref{CS} and $B_P$ satisfies
		\begin{align}\label{PME:eq3}
		B_P=\hat{\nu}_P(0)+\frac{1}{\nu_P([0,1])}.
		\end{align}  
	\end{theorem}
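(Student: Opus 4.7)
The plan is to derive Theorem \ref{PME} by passing to the zero-temperature limit $\beta\to\infty$ in the positive-temperature Parisi formula (Theorem \ref{PF}), and to identify the minimizer via Euler--Lagrange conditions combined with the Crisanti--Sommers formula \eqref{CS}. As a first step I would show that $F_{N,\beta}\to \ME_N$ uniformly in $N$ as $\beta\to\infty$: the upper bound $F_{N,\beta}\leq \ME_N$ is immediate from $\int_{S_N}e^{\beta H_N}d\lambda_N\leq e^{\beta N\ME_N}$, and monotonicity in $\beta$ follows from convexity of $\beta\mapsto \beta F_{N,\beta}$ with value $0$ at $\beta=0$; the matching lower bound comes from restricting the integral to $\{H_N\geq N(\ME_N-\varepsilon)\}$ and using standard volume estimates that bound the normalized $\lambda_N$-measure of this level set from below by $e^{-C_\varepsilon N}$ with overwhelming probability (uniformly in $N$). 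Together with Theorem \ref{PF} this yields $\ME=\lim_{\beta\to\infty}\inf_{(b,x)\in\mathcal{M}}\mathcal{P}_\beta(b,x)$. Under the change of variables $b=\beta B$ and $\nu_\beta(ds)=\beta x(s)\,ds$ on $[0,1]$, one has $d_\beta^x(s)=\beta\hat{\nu}_\beta(s)$, and a direct manipulation of \eqref{parisi} produces the identity
\[
\mathcal{P}_\beta(\beta B,x)=\mathcal{P}(B,\nu_\beta)-\frac{1+\log(\beta B)}{2\beta}.
\]
Both directions of $\inf\mathcal{P}=\lim_\beta\inf\mathcal{P}_\beta$ then follow: the ``$\leq$'' inequality by approximating any $(B,\nu)\in\mathcal{U}$ by absolutely continuous $\nu_\beta$ (with the atom $\Delta\delta_{\{1\}}$ emerging as a localized spike of $x_\beta$ near $1$), and the ``$\geq$'' inequality by rescaling a near-minimizer of $\mathcal{P}_\beta$, extracting a subsequential weak limit via a priori control on $B_\beta$ and on $\nu_\beta([0,1])$, and invoking lower semicontinuity of $\mathcal{P}$.

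Uniqueness of the minimizer follows from joint strict convexity of $(B,\nu)\mapsto\mathcal{P}(B,\nu)$ on $\mathcal{U}$: each $1/(B-\hat{\nu}(s))$ is strictly convex as $u\mapsto 1/u$ composed with an affine map, the remaining terms are linear, and injectivity of $\nu\mapsto\hat{\nu}$ on $\mathcal{K}$ (using $\xi''>0$ on $(0,1]$) promotes pointwise strict convexity to strict convexity of the integral. To identify the minimizer as $(B_P,\nu_P)$ and to verify \eqref{PME:eq3}, I would establish the pointwise identity $B_P-\hat{\nu}_P(s)=1/\nu_P([s,1])$ for all $s\in[0,1]$: differentiating the Euler--Lagrange equality $\int_0^s dq/\nu_P((q,1])^2=\xi'(s)+h^2$ of $\mathcal{Q}$ on the interior of $\mathrm{supp}\,\nu_P$ gives $\nu_P([s,1])=1/\sqrt{\xi''(s)}$ there, which extends to all of $[0,1]$ by constancy of both sides on the gap $[0,s_P)$ and on internal gaps of $\nu_P$; evaluation at $s=0$ is exactly \eqref{PME:eq3}. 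Granting this identity, Fubini-type manipulations ($\int_0^1 s\xi''(s)\nu(ds)=\int_0^1\hat{\nu}(s)\,ds$ and $\int_0^1\xi'(s)\nu(ds)=\int_0^1\xi''(r)\nu([r,1])\,dr$) collapse the three integral terms of $\mathcal{P}(B_P,\nu_P)$ into the two terms of $\mathcal{Q}(\nu_P)$, giving $\mathcal{P}(B_P,\nu_P)=\mathcal{Q}(\nu_P)=\ME$; by uniqueness, $(B_P,\nu_P)$ is the unique minimizer.

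The main obstacle will be the lower-semicontinuity step in the $\beta\to\infty$ limit: the rescaled measures $\nu_\beta$ are not a priori tight on $[0,1]$, so careful control that $B_\beta-\hat{\nu}_\beta(0)$ stays bounded away from $0$ and that the total mass $\nu_\beta([0,1])$ stays bounded along minimizing sequences (both derivable from boundedness of $\mathcal{P}_\beta$) is essential to close the argument. A secondary technical point is the uniformity in $N$ of the lower bound on the level-set volume used in the first step, which rests on Gaussian concentration together with an elementary ball-volume lower bound in the neighborhood of a near-maximizer.
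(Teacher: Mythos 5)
Your overall strategy --- passing to the zero-temperature limit $\beta\to\infty$ in Theorem \ref{PF} via the rescaling $b=\beta B$, $\nu_\beta(ds)=\beta x(s)\,ds$ --- is exactly the paper's approach, and the scaling identity $\mathcal{P}_\beta(\beta B,x)=\mathcal{P}(B,\nu_\beta)-\tfrac{1+\log(\beta B)}{2\beta}$ is correct (note the paper's line ``$b_\beta=B/\beta$'' is a typo for $b_\beta=\beta B$). The paper, however, does not re-derive the a priori bounds on $B_\beta$, $\nu_\beta([0,1])$, the vague convergence of rescaled optimizers, or the identity \eqref{PME:eq3}; it cites Theorem 1, Lemmas 2, 7, 10 and Equation (78) of Chen--Sen \cite{ArnabChen15}, where these were established. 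You attempt to reproduce that content, and here there is a genuine gap.

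Your derivation of \eqref{PME:eq3} rests on two claims that are both false. First, you assert the pointwise identity $B_P-\hat{\nu}_P(s)=1/\nu_P([s,1])$ for \emph{all} $s\in[0,1]$. Differentiating, this forces $\gamma_P(s)\bigl(\xi''(s)-1/\nu_P([s,1])^2\bigr)=0$, so the identity can propagate only where either $\gamma_P=0$ or $\nu_P([s,1])=1/\sqrt{\xi''(s)}$. On a gap of $\mathrm{supp}\,\rho_P$ where $\gamma_P$ is a positive constant, neither holds, and the identity genuinely fails: for the pure $4$-spin model (1RSB, $\mathrm{supp}\,\rho_P=\{0\}$, $\gamma_P\equiv A>0$), a direct numerical check at $s=1/2$ gives $B_P-\hat{\nu}_P(1/2)\approx 1.33$ while $1/\nu_P([1/2,1])\approx 1.44$. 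Your proposed ``extension by constancy'' does not apply, since in the 1RSB case the support of $\nu_P$ is all of $[0,1]$ and there are no gaps of $\nu_P$; only $\rho_P$ has a gap, and on it both sides vary. Second, the claimed Euler--Lagrange equality $\int_0^s dq/\nu_P((q,1])^2=\xi'(s)+h^2$ on the interior of $\mathrm{supp}\,\nu_P$ is also incorrect (in the same example it fails at $s=1/2$). The reason is structural: the admissible variations in $\mathcal{K}$ vary $\rho$ (the measure with $\rho([0,s])=\gamma(s)$) and $\Delta$, not $\nu$ itself, because $\gamma$ must stay nondecreasing. The correct optimality conditions are the integrated ones of the form appearing in Proposition \ref{prop1} ($f(1)=0$, $\bar f\ge 0$, $\bar f=0$ on $\mathrm{supp}\,\rho_P$); they give pointwise information only on $\mathrm{supp}\,\rho_P$. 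So the Fubini collapse you describe does not produce $\mathcal{P}(B_P,\nu_P)=\mathcal{Q}(\nu_P)$, even though that equality is ultimately true. To close this step one has to argue as in \cite[Lemma 10]{ArnabChen15}, relating the $\mathcal{P}$- and $\mathcal{Q}$-optimality conditions on $\mathrm{supp}\,\rho_P$ together with the endpoint conditions at $s=s_P$ and $s=1$, rather than using a global pointwise correspondence.

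A secondary remark: your a priori control on $\nu_\beta([0,1])$ and $B_\beta-\hat{\nu}_\beta(0)$ ``from boundedness of $\mathcal{P}_\beta$'' is more delicate than the sentence suggests. The bounds actually used (e.g.\ $\beta x_{\beta,P}(s)\leq 2\sqrt{\xi'(1)}/(\xi(1)-\xi(s))$ and a corresponding bound on the total mass) come from the specific first-order conditions of the Parisi functional, not directly from the value of $\mathcal{P}_\beta$ at an approximate minimizer; this is \cite[Lemma 2]{ArnabChen15}, which the present paper also reuses in the proof of Theorem \ref{thm3}. On the positive side, your observation that strict convexity of $(B,\nu)\mapsto\mathcal{P}(B,\nu)$ yields uniqueness is a clean addition: the paper asserts uniqueness but leaves it implicit (it follows from uniqueness of the Crisanti--Sommers minimizer once the correspondence is in place).
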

	
	The following proposition provides a characterization for the optimizer $(B_P,\nu_P).$
	
	\begin{proposition}
		\label{prop1}
		Let $(B,\nu)\in \mathcal{U}.$
		Define
		\begin{align*}
		\bar f(s)&=\int_s^1f(r)\xi''(r)dr,
		\end{align*}
		where
		$$
		f(r):=\frac{h^2}{(B-\hat{\nu}(0))^2}+\int_0^r \frac{\xi''(s)ds}{(B-\hat{\nu}(s))^2}-r.
		$$
		Then $(B,\nu)$ is the minimizer of $\mathcal{P}$ if and only if $f(1)=0$,  $\min_{r\in[0,1]}\bar{f}(r)\geq 0,$ and $\rho(S)=\rho([0,1))$, where $S:=\{r\in[0,1):\bar{f}(r)=0\}$, and $\rho$ is the measure on $[0,1)$ induced by $\gamma,$ i.e., $\rho([0,s])=\gamma(s)$ for $s\in[0,1).$ 
	\end{proposition}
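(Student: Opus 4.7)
The plan is to interpret the proposition as the first-order optimality conditions for the strictly convex functional $\mathcal{P}$ on $\mathcal{U}$, which makes the conditions simultaneously necessary and sufficient for $(B,\nu)$ to be a minimizer. Convexity of $\mathcal{P}$ follows because $(B,y)\mapsto 1/(B-y)$ is convex on $\{B>y\}$ while $\nu\mapsto\hat{\nu}(s)$ is linear, and the remaining terms in \eqref{PME:eq2} are affine.

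I will compute two Gateaux derivatives at $(B,\nu)$. In the $B$-direction, a direct differentiation yields
$$\partial_B \mathcal{P}(B,\nu)=\frac{1}{2}\Bigl(1-\frac{h^2}{(B-\hat{\nu}(0))^2}-\int_0^1\frac{\xi''(s)}{(B-\hat{\nu}(s))^2}\,ds\Bigr)=-\frac{f(1)}{2}.$$
Since $B$ is interior to the constraint $B>\hat{\nu}(0)$, stationarity in $B$ forces $f(1)=0$. In the $\nu$-direction, along a signed measure $\eta$ on $[0,1]$ with $\hat{\eta}(s):=\int_s^1\xi''(r)\eta(dr)$, one obtains
$$\frac{d}{dt}\bigg|_{t=0}\mathcal{P}(B,\nu+t\eta)=\frac{1}{2}\Bigl(\frac{h^2\hat{\eta}(0)}{(B-\hat{\nu}(0))^2}+\int_0^1\frac{\xi''(s)\hat{\eta}(s)}{(B-\hat{\nu}(s))^2}\,ds-\int_0^1 s\xi''(s)\,\eta(ds)\Bigr).$$
A single Fubini swap on the middle term (together with $\hat{\eta}(0)=\int_0^1\xi''(r)\eta(dr)$) collapses the entire expression to $\frac{1}{2}\int_0^1 f(r)\xi''(r)\,\eta(dr)$.

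Next, I translate the $\nu$-first-order condition into the stated conditions on $\bar f$. Any admissible $\eta$ decomposes as $\eta=\eta_0+c\,\delta_{\{1\}}$, where $\eta_0$ has ``density'' $\mu([0,\cdot])$ on $[0,1)$ for a signed measure $\mu$ with $\rho+\mu\geq 0$, and $c\in\mathbb{R}$ satisfies $\Delta+c>0$. Once $f(1)=0$, the atom at $1$ contributes $c\,f(1)\xi''(1)=0$, and a second Fubini rewrites the $\eta_0$-integral as $\int_{[0,1)}\bar{f}(s)\mu(ds)$. First-order optimality requires $\int\bar{f}\,d\mu\geq 0$ for every admissible $\mu$. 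Choosing $\mu$ a positive point mass (and invoking continuity of $\bar{f}$) forces $\bar{f}\geq 0$ on $[0,1)$; together with $\bar{f}(1)=0$ this is $\min_{[0,1]}\bar{f}\geq 0$. Choosing $\mu=-\rho|_E$ for arbitrary Borel $E\subset[0,1)$ gives $\int_E\bar{f}\,d\rho\leq 0$, which combined with $\bar{f}\geq 0$ forces $\bar{f}=0$ $\rho$-a.e., i.e. $\rho(S)=\rho([0,1))$. Sufficiency is immediate from convexity: under the three conditions, every admissible directional derivative is nonnegative, and $\mathcal{P}$ being convex upgrades this to global minimality.

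The main obstacle is the careful bookkeeping of admissibility. The constraint $\rho+\mu\geq 0$ is genuinely one-sided, which is why the two conditions on $\bar f$ split into a pointwise lower bound (from positive variations) and a support condition (from negative variations up to $-\rho$); the atom at $1$ requires separate treatment but is trivialized once $f(1)=0$; and the two Fubini interchanges, together with the continuity of $\bar{f}$ inherited from continuity of $f$ on the open-constraint region, require only standard measure-theoretic justification.
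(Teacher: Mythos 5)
Your proof is correct and takes essentially the same route as the paper: compute the first-order Gateaux derivative of the strictly convex functional $\mathcal{P}$, reduce the $\nu$-derivative via Fubini to $\tfrac12\int f\xi''\,d\eta$, then Fubini once more to express the absolutely continuous part as $\int\bar f\,d\mu$, and finally read off the three optimality conditions by testing against admissible variations. The only presentational difference is that you split the $B$-variation and the $\nu$-variation, whereas the paper differentiates along the segment toward an arbitrary $(B',\nu')\in\mathcal{U}$ and then identifies $\nu'-\nu$ with $(\rho'-\rho,\Delta'-\Delta)$; your bookkeeping with a signed measure $\mu$ satisfying $\rho+\mu\geq 0$ and a separate atom $c\,\delta_{\{1\}}$ is equivalent to their $\rho'-\rho$ and $\Delta'-\Delta$. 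You are also slightly more explicit about the sufficiency direction via convexity, which the paper leaves implicit in the final sentence of its argument.
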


	Next, we proceed to state the RSB bound for the maximum coupled energy. For $a\in[0,1],$ let $\mathcal{K}_a$ be the collection of all measures on $[0,1]$ of the form 
	$$
	\nu(ds)=1_{[0,a)}(s)\gamma_1(s)ds+1_{[a,1)}(s)\gamma_2(s)ds+\Delta\delta_{\{1\}}(ds)
	$$ 
	Here, $\gamma_1$ and $\gamma_2$ are  nonnegative and nondecreasing functions with right continuity on $[0,a)$ and $[a,1)$ respectively and they satisfy $\gamma_1(a-)\leq 2\gamma_2(a)$. Also, $0\leq \Delta<\infty$. For $\nu \in\mathcal{K}_a$, define $\hat{\nu}$ by \eqref{more:eq2}.
	Let $\mathcal{U}_a$ be the collection of all pairs $(B,\nu,\lambda)\in [0,\infty)\times \mathcal{K}_a\times\mathbb{R}$ with $$
	|\lambda|+\int_0^1\xi''(r)\nu(dr)<B.
	$$
	For any $u\in [-1,1]$, define the functional $\mathcal{P}_u $ on $\mathcal{U}_{|u|}$ by  
	\begin{align*}
	\mathcal{P}_u(B,\lambda,\nu) &=\int_0^{|u|} \frac{\xi''(q)}{B-\iota \lambda-\hat{\nu} (q)}dq+\frac{1}{2}\int_{|u|}^1\frac{\xi''(s)}{B-\lambda-\hat{\nu}(s)}ds+\frac{1}{2}\int_{|u|}^1\frac{\xi''(s)}{B+\lambda-\hat{\nu}(s)} ds\\
	&-\lambda u+B-\int_0^1s\xi''(s)\nu(ds)+\left\{
	\begin{array}{ll}
	\frac{h^2}{B-\lambda-\hat{\nu} (0)},&\mbox{if $u\in[0,1]$},\\
	\frac{h^2}{B-\lambda-\hat{\nu} (|u|)},&\mbox{if $u\in[-1,0)$}.
	\end{array}\right.
	\end{align*}
	Our RSB bound for the maximum coupled energy is stated as follows.

	\begin{theorem}[RSB bound for the maximum coupled energy]\label{lem5} Let $u\in[-1,1].$ For any $(B,\lambda,\nu)\in \mathcal{U}_{|u|}$, we have
		\begin{align}\label{lem5:eq1}
		\lim_{\varepsilon\downarrow 0}\limsup_{N\rightarrow\infty}\MCE_{N} \bigl((u-\varepsilon,u+\varepsilon)\bigr)&\leq \mathcal{P}_u(B,\lambda,\nu).
		\end{align}

	\end{theorem}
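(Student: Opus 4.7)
The plan is to derive Theorem \ref{lem5} as the $\beta\to\infty$ limit of the positive-temperature Guerra-Talagrand RSB bound (Theorem \ref{RSBB}), following the same scheme used to obtain Theorem \ref{PME} from Theorem \ref{PF}. The argument has three ingredients: a uniform comparison between $\MCE_N$ and $\CF_{N,\beta}$; an application of Theorem \ref{RSBB} at fixed $\beta$; and the construction of a sequence $(b_\beta,\lambda_\beta,x_\beta)\in\mathcal{M}_{|u|}$ such that $\mathcal{P}_{\beta,u}(b_\beta,\lambda_\beta,x_\beta)\to\mathcal{P}_u(B,\lambda,\nu)$.

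The first ingredient is a standard Laplace estimate showing that for any open $A\subset[-1,1]$ and any $\beta>0$,
$$\MCE_N(A)\leq \CF_{N,\beta}(A)+C\frac{\log\beta}{\beta},$$
with $C$ independent of $N$. The lower bound on the coupled partition function is obtained by restricting integration to a ball of radius $r_\beta\sim 1/\sqrt{\beta}$ about a maximizer of $H_N(\sigma^1)+H_N(\sigma^2)$ on $\{R_{1,2}\in A\}$; the a.s.\ smoothness of $H_N$ guaranteed by \eqref{eq:cond}, together with standard concentration bounds on $\|\nabla H_N\|$ and $\|\nabla^2 H_N\|$, makes the error $O(\log\beta/\beta)$ uniform in $N$. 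Combined with Theorem \ref{RSBB}, for every $(b,\lambda,x)\in\mathcal{M}_{|u|}$,
$$\lim_{\varepsilon\downarrow 0}\limsup_{N\to\infty}\MCE_N\bigl((u-\varepsilon,u+\varepsilon)\bigr)\leq \mathcal{P}_{\beta,u}(b,\lambda,x)+C\frac{\log\beta}{\beta}.$$

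For the zero-temperature limit, given $(B,\lambda,\nu)\in\mathcal{U}_{|u|}$ with $\nu(ds)=\gamma(s)1_{[0,1)}(s)\,ds+\Delta\delta_{\{1\}}(ds)$, set
$$b_\beta:=\beta B,\qquad \lambda_\beta:=\beta\lambda,\qquad x_\beta(s):=\begin{cases}\gamma(s)/\beta,& s\in[0,1-\Delta/\beta),\\ 1,& s\in[1-\Delta/\beta,1].\end{cases}$$
The steep rise of $x_\beta$ on $[1-\Delta/\beta,1]$ serves as a surrogate for the atom $\Delta\delta_{\{1\}}$. For large $\beta$, $(b_\beta,\lambda_\beta,x_\beta)\in\mathcal{M}_{|u|}$: the inequality $\gamma_1(|u|-)\leq 2\gamma_2(|u|)$ built into $\mathcal{K}_{|u|}$ gives the analogous inequality for $x_\beta$, and the hypothesis $|\lambda|+\hat\nu(0)<B$ yields $|\lambda_\beta|+d_\beta^{x_\beta}(0)<b_\beta$. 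A short computation shows
$$\frac{d_\beta^{x_\beta}(q)}{\beta}=\beta\int_q^1\xi''(s)\,x_\beta(s)\,ds\;\longrightarrow\;\int_q^1\xi''(s)\gamma(s)\,ds+\Delta\,\xi''(1)=\hat\nu(q),\quad q\in[0,1),$$
and, analogously, $\beta\int_0^1 q\xi''(q)x_\beta(q)\,dq\to\int_0^1 q\xi''(q)\,\nu(dq)$, while $(b_\beta-1-\log b_\beta)/\beta\to B$ and $\beta^{-1}\log\sqrt{b_\beta^2/(b_\beta^2-\lambda_\beta^2)}\to 0$. Each integral in $T_{\beta,u}/\beta$ converges by dominated convergence to the corresponding integral in $\mathcal{P}_u(B,\lambda,\nu)$, and the external field term $\beta h^2/(b_\beta-\lambda_\beta-d_\beta^{x_\beta}(0))$ converges to $h^2/(B-\lambda-\hat\nu(0))$. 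Hence $\mathcal{P}_{\beta,u}(b_\beta,\lambda_\beta,x_\beta)\to\mathcal{P}_u(B,\lambda,\nu)$, and letting $\beta\to\infty$ in the inequality of the previous paragraph proves \eqref{lem5:eq1}.

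The principal technical obstacle is reconciling the fact that $x$ in $\mathcal{M}_{|u|}$ is a bounded cdf-type object with the presence of a possibly large atom $\Delta$ at $s=1$ in $\nu$. Our construction packs this atom into a rapidly rising portion of $x_\beta$ near $s=1$; a careful calculation is required to verify that the boundary contribution to $d_\beta^{x_\beta}(q)/\beta$ comes out exactly to $\Delta\xi''(1)$ and not to a different constant. A parallel delicacy appears at $s=|u|$ when $\gamma$ has a jump there, and the defining inequality $\gamma_1(|u|-)\leq 2\gamma_2(|u|)$ of $\mathcal{K}_{|u|}$ is precisely what is needed to place $x_\beta$ in $\mathcal{M}_{|u|}$.
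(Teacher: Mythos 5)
Your proposal is correct in substance and arrives at the same $\beta\to\infty$ rescaling $(b_\beta,\lambda_\beta,x_\beta)=(\beta B,\beta\lambda,\gamma/\beta\ \text{with atom packed into}\ [1-\Delta/\beta,1])$ that the paper uses, but the way you bridge from $\MCE_N$ to the Guerra--Talagrand bound is genuinely different. The paper first passes to $\CF_{N,\beta}$ asymptotically (citing Lemma~8 of Chen--Sen, with the interval enlarged from $\varepsilon/2$ to $\varepsilon$), then bounds $\limsup_N\CF_{N,\beta}\bigl((u-\varepsilon,u+\varepsilon)\bigr)$ by a \emph{covering argument} over $v\in(u-\varepsilon,u+\varepsilon)$, and crucially needs the convergence of $\beta^{-1}\mathcal{P}_{\beta,v}(b_\beta,\lambda_\beta,x_{\beta,v})\to\mathcal{P}_v(B,\lambda,\nu_v)$ to be \emph{uniform in $v$} (for which it must carry the jump point of $x_{\beta,v}$ at $|v|$ rather than $|u|$) before letting $\varepsilon\downarrow0$. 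You instead prove a quantitative Laplace bound $\MCE_N(A)\leq\CF_{N,\beta}(A)+C\log\beta/\beta$ with $C$ independent of $N$, which keeps the constraint set fixed and thus lets you apply Theorem~\ref{RSBB} directly at the single point $u$ and swap the order of the $\varepsilon\downarrow0$ and $\beta\to\infty$ limits; the covering step and the uniformity-in-$v$ verification are thereby avoided. This is a clean simplification, at the price of having to justify the $N$-uniform Laplace estimate (ball-volume vs.\ gradient-oscillation trade-off, plus the fact that intersecting the ball with the codimension-one slab $\{R_{1,2}\in A\}$ costs only a factor like $\varepsilon/r$ in volume, hence $O(\log(1/\varepsilon)/(N\beta))$ in the normalized log, which vanishes as $N\to\infty$ at fixed $\varepsilon$).

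Two details you should not gloss over. First, for $(b_\beta,\lambda_\beta,x_\beta)\in\mathcal{M}_{|u|}$ you need $x_\beta\leq1$ on $[|u|,1]$, i.e.\ $\gamma\bigl((1-\Delta/\beta)-\bigr)\leq\beta$; this can fail when $\gamma$ blows up at $1$, so you still need the approximation argument (assume $\gamma(1-)<\infty$, then release it) that the paper performs explicitly. Second, the endpoints $u=\pm1$ degenerate (the overlap constraint collapses to the diagonal) and the paper treats them separately via continuity of $u\mapsto\mathcal{P}_u(B,\lambda,\nu)$ together with a Dudley-entropy argument for $\lim_{u\to\pm1}$ of the left-hand side; your Laplace estimate should be rechecked in that degenerate regime or the same reduction invoked.
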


	One may find a similar inequality in Arnab-Chen \cite[Theorem 6]{ArnabChen15}, where \eqref{lem5:eq1} was shown to be valid along a special choice of the parameter $(B,\nu).$ In next sections, Theorem \ref{lem5} plays an essential role in controlling the maximum coupled energy by choosing proper parameter $(B,\lambda,\nu)$.

	\subsection{Proof of Theorems \ref{PME}, \ref{lem5} and Proposition \ref{prop1}} 
	
	\begin{proof}[\bf Proof of Theorem \ref{PME}] Let $(B,\nu)\in \mathcal{U}$. Let $\gamma$ be the density of $\nu$ on $[0,1)$ and $\Delta$ be the mass at $1.$ First, we assume that $\gamma(1-)<\infty.$ For $\beta>0$, let $b_\beta=B/\beta$ and define
		\begin{align*}
		x_{\beta}(s)&=\frac{\gamma(s)}{\beta}1_{[0,1-\Delta/\beta)}(s)+1_{[1-\Delta/\beta,1]}(s).
		\end{align*} 
		The assumption $\gamma(-1)<\infty$ guarantees that $(b_\beta,x_{\beta})\in \mathcal{U}$ for $\beta$ sufficiently large. Thus, a direct computation gives
		\begin{align*}
		\lim_{\beta\rightarrow\infty}\mathcal{P}_\beta (b_\beta,x_\beta)=\mathcal{P} (B,\nu).
		\end{align*}
		On the other hand, it is well-known (see e.g. \cite[Theorem 4,1]{AB} or \cite[Lemma 6]{ArnabChen15}) that
		\begin{align*}
		\ME=\lim_{N\rightarrow\infty}\ME_{N} &=\lim_{\beta\rightarrow\infty}\lim_{N\rightarrow\infty}F_N .
		\end{align*}
		Using \eqref{PF:eq1}, we obtain that 
		\begin{align*}
		\lim_{N\rightarrow\infty}\ME_N &\leq \mathcal{P} (B,\nu).
		\end{align*}
		One can easily release the assumption $\gamma(1-)<\infty$ by an approximation argument and consequently,
		\begin{align*}
		\ME &\leq \inf_{(B,\nu)\in\mathcal{U}}\mathcal{P} (B,\nu).
		\end{align*}
		To see that the equality holds, we recall the optimizer $(b_{\beta,P},x_{\beta,P})$ from \eqref{PF:eq1}. If we can show that $\mathcal{P}_\beta (b_{\beta,P},x_{\beta,P})$ converges to $\mathcal{P} (B,\nu)$ for certain $(B,\nu)\in \mathcal{U}$, then the Parisi formula \eqref{PF} together with the above inequality completes our proof. This part of the derivation has appeared in the work \cite{ArnabChen15}, where from Theorem 1, Lemma 7, and Equation (78) therein, it is known that there exists a sequence $(\beta_k)_{k\geq 1}$ with $\lim_{k\rightarrow\infty}\beta_k=\infty$ such that 
		\begin{align*}
		B_P &:=\lim_{k\rightarrow\infty}\beta_k^{-1}b_{\beta_k,P},\\
		\nu_P &=\lim_{k\rightarrow\infty}\beta x_{\beta_k,P}(s)ds\,\,\mbox{vaguely},\\
		B_P &>\int_0^1\xi''(s)\nu_P (ds),
		\end{align*} 
		and more importantly,
		\begin{align*}
		\ME=\lim_{N\rightarrow\infty}\ME_N =\mathcal{P} (B_P ,\nu_P ).
		\end{align*}
		This means that $(B_P ,\nu_P )\in \mathcal{U}$ and the announced formula holds. To see \eqref{PME:eq3}, we note that it was already established in the proof of \cite[Lemma 10]{ArnabChen15}.
	\end{proof}
	
	\begin{proof}[\bf Proof of Proposition \ref{prop1}]
		Assume that $(B,\nu)$ is the minimizer. Let $(B',\nu')$ be an arbitrary element in $\mathcal{U}.$ Write
		\begin{align*}
		\nu(ds)&=\gamma(s)1_{[0,1)}(s)ds+\Delta \delta_{\{1\}}(ds),\\
		\nu'(ds)&=\gamma'(s)1_{[0,1)}(s)ds+\Delta' \delta_{\{1\}}(ds).
		\end{align*}
		Let $\rho$ and $\rho'$ be the measures induced by $\gamma$ and $\gamma'.$ For $\theta\in[0,1]$, define $$
		(B_\theta,\nu_\theta)=(1-\theta)(B,\nu)+\theta(B',\nu').
		$$ 
		Then
		\begin{align}
		\begin{split}
		\label{prop1:proof:eq1}
		\mathcal{P}(B_\theta,\nu_\theta)\Big|_{\theta=0}&=\Bigl(-\frac{h^2}{(B-\hat{\nu}(0))^2}-\int_0^1\frac{\xi''(s)ds}{(B-\hat{\nu}(s))^2}+1\Bigr)(B'-B)\\
		&+\frac{h^2(\hat{\nu}'(0)-\hat{\nu}(0))}{(B-\hat{\nu}(0))^2}+\int_0^1\frac{\xi''(s)(\hat{\nu}'(s)-\hat{\nu}(s))}{(B-\hat{\nu}(s))^2}ds-\int_0^1s\xi''(s)(\nu'-\nu)(ds)\geq 0.
		\end{split}
		\end{align}
		From the first line of \eqref{prop1:proof:eq1}, $f(1)=0.$
		On the other hand, noting that
		\begin{align*}
		\frac{h^2(\hat{\nu}'(0)-\hat{\nu}(0))}{(B-\hat{\nu}(0))^2}=\int_0^1\frac{h^2\xi''(r)({\nu}'-{\nu})(dr)}{(B-\hat{\nu}(0))^2}
		\end{align*}
		and by Fubini's theorem,
		\begin{align*}
		\int_0^1\frac{\xi''(s)(\hat{\nu}'(s)-\hat{\nu}(s))}{(B-\hat{\nu}(s))^2}ds&=\int_0^1\int_0^r \frac{\xi''(s)ds}{(B-\hat{\nu}(s))^2}\xi''(r)({\nu}'-{\nu})(dr)\\
		\end{align*}
		the second line leads to			\begin{align*}
		\int_0^1f(r)\xi''(r)(\nu'-\nu)(dr)\geq 0.
		\end{align*}
		From this, Fubini's theorem yields
		\begin{align*}
		0&\leq\int_0^1 f(r)\xi''(r)(\nu'-\nu)(dr)\\
		&=\int_0^1f(r)\xi''(r)(\gamma'(r)-\gamma(r))dr+f(1)\xi''(1)(\Delta'-\Delta)\\
		&=\int_0^1\int_s^1f(r)\xi''(r)dr(\rho'-\rho)(ds)+f(1)\xi''(1)(\Delta'-\Delta).
		\end{align*}
		The validity of this inequality is equivalent to that $f(1)=0,$ $\min_{r\in[0,1]}\bar{f}(r)\geq 0,$ and $\rho(S)=\rho([0,1))$. 
	\end{proof}

	The proof of Theorem \ref{lem5} follows a similar argument as Theorem \ref{PME}.
	
	\begin{proof}[\bf Proof of Theorem \ref{lem5}]
		First we assume that $u\in (-1,1).$ Let $\varepsilon\in (0,1-|u|).$ An argument similar to \cite[Lemma 8]{ArnabChen15} leads to that for any $\beta>0,$
		\begin{align}\label{lem5:proof:eq1}
		\lim_{N\rightarrow\infty}\MCE_{N} ((u-\varepsilon/2,u+\varepsilon/2))\leq \limsup_{\beta\rightarrow\infty}\limsup_{N\rightarrow\infty}F_{N,\beta} ((u-\varepsilon,u+\varepsilon)).
		\end{align} 
		To bound the limit on the right-hand side, we use \eqref{RSBB} combined with a covering argument (see for instance \cite[Theorem 6]{ArnabChen15}) to obtain that for any $(b,\lambda,x)\in \mathcal{M}_{|u|},$
		\begin{align}\label{lem5:proof:eq2}
		\limsup_{N\rightarrow\infty}F_{N,\beta} ((u-\varepsilon,u+\varepsilon))&\leq \sup_{v\in (u-\varepsilon,u+\varepsilon)}\frac{\mathcal{P}_{\beta,v}(b,\lambda,x)}{\beta}.
		\end{align}
		Consider an arbitrary $(B,\lambda,\nu)\in \mathcal{U}_{|u|}$ for
		$$
		\nu(ds)=1_{[0,|u|)}(s)\gamma_1(s)ds+1_{[|u|,1)}(s)\gamma_2(s)ds+\Delta\delta_{\{1\}}(ds).
		$$ 
		For any $v\in [u-\varepsilon,u+\varepsilon]$, set
		$$
		\nu_v(ds)=1_{[0,|v|)}(s)\gamma_1(s)ds+1_{[|v|,1)}(s)\gamma_2(s)ds+\Delta\delta_{\{1\}}(ds).
		$$ 
		Assume that $\gamma_2(1-)<\infty.$ Set
		\begin{align*}
		b_\beta&=\beta B,\,\,\lambda_\beta=\beta\lambda
		\end{align*}
		and
		\begin{align*}
		x_{\beta,v}(s)&=1_{[0,|v|)}(s)\frac{\gamma_1(s)}{\beta}+1_{[|v|,1-\Delta/\beta)}(s)\frac{\gamma_2(s)}{\beta}+1_{[1-\Delta/\beta,1]}(s).
		\end{align*}
		Then $(b_\beta,\lambda_\beta,x_{\beta,v})\in \mathcal{M}_{|v|}$ for $\beta$ sufficiently large. As a result, a direct computation leads to
		\begin{align*}
		\lim_{\beta\rightarrow\infty}\frac{\mathcal{P}_{\beta,v}(b_\beta,\lambda_\beta,x_{\beta,v})}{\beta}=\mathcal{P}_v(B,\lambda,\nu_v).
		\end{align*}
		A key fact here is that this convergence is uniform over all $v\in [u-\varepsilon,u+\varepsilon].$
		This together with \eqref{lem5:proof:eq1} and \eqref{lem5:proof:eq2} implies
		\begin{align*}
		\lim_{N\rightarrow\infty}\MCE_{N} ((u-\varepsilon/2,u+\varepsilon/2))&\leq \sup_{v\in[u-\varepsilon,u+\varepsilon]}\mathcal{P}_v (B,\lambda,\nu_v).
		\end{align*} 
		Letting $\varepsilon\downarrow 0$ yields that
		\begin{align}\label{lem5:proof:eq3}
		\lim_{\varepsilon\downarrow 0}\lim_{N\rightarrow\infty}\MCE_{N} ((u-\varepsilon,u+\varepsilon))&\leq \mathcal{P}_u(B,\lambda,\nu).
		\end{align}
		By an approximation argument, we can release the assumption $\gamma_2(1-)<\infty$ and this inequality remains valid. To see how this inequality is also true for $u=\pm1$, we note that 
		\begin{align*}
		\lim_{u\rightarrow 1^-}\mathcal{P}_u (B,\lambda,\nu)&=\mathcal{P}_1 (B,\lambda,\nu),\\
		\lim_{u\rightarrow -1^+}\mathcal{P}_u(B,\lambda,\nu)&=\mathcal{P}_{-1} (B,\lambda,\nu).
		\end{align*}
		On the other hand, using Dudley's entropy integral, we can show that 
		\begin{align*}
		\lim_{u\rightarrow 1^-}\lim_{\varepsilon\downarrow 0}\limsup_{N\rightarrow\infty}\MCE_{N} (u-\varepsilon,u+\varepsilon)&=\lim_{\varepsilon\downarrow 0}\limsup_{N\rightarrow\infty}\MCE_{N} ((1-\varepsilon,1+\varepsilon)),\\
		\lim_{u\rightarrow -1^+}\lim_{\varepsilon\downarrow 0}\limsup_{N\rightarrow\infty}\MCE_{N} (u-\varepsilon,u+\varepsilon)&=\lim_{\varepsilon\downarrow 0}\limsup_{N\rightarrow\infty}\MCE_{N} ((-1-\varepsilon,-1+\varepsilon)).
		\end{align*}
		For detailed argument of this, we refer the readers to \cite[Lemma 13]{ArnabChen15}. Finally, our proof is completed by these inequalities and \eqref{lem5:proof:eq3}.
	\end{proof}

	\section{Control of maximum coupled energy} 
	
	In this section, we present the proof of Theorems \ref{thm1}, \ref{thm2} and \ref{thm1.1}, which is based on a subtle control of the RSB bound in the foregoing section.
	\subsection{Proof of Theorem \ref{thm1}}
	
	Recall the measure $\rho_P$ from \eqref{rho}. The proof of Theorem \ref{thm1} is a  consequence of the following theorem.
	
	\begin{theorem}
		\label{thm3} If $u\in \mbox{supp}\rho_P $, then for any $\varepsilon>0,$
		\begin{align*}
		\lim_{N\rightarrow\infty}\MCE_N \bigl((u-\varepsilon,u+\varepsilon)\bigr)&=2\ME .
		\end{align*}
	\end{theorem}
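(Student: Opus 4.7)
The upper bound $\limsup_{N}\MCE_{N}\bigl((u-\varepsilon,u+\varepsilon)\bigr)\leq 2\ME$ is immediate from $\MCE_{N}(A) \leq \MCE_{N}([-1,1])=2\ME_{N}$ together with $\ME_{N}\to \ME$. The task reduces to the matching lower bound, and the plan is to pass through the finite-temperature coupled free energy. For every $\beta>0$ one has the deterministic comparison
\[
\CF_{N,\beta}(A)\leq \MCE_{N}(A),
\]
obtained from $\frac{1}{N\beta}\log\int_{A}e^{\beta N g}\,d\mu\leq \max_{A}g$ applied to $g(\sigma^{1},\sigma^{2}):=(H_{N}(\sigma^{1})+H_{N}(\sigma^{2}))/N$. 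Setting $F_{\beta}:=\lim_{N}F_{N,\beta}$, it therefore suffices to produce a sequence $\beta_{k}\to\infty$ along which $\liminf_{N}\CF_{N,\beta_{k}}\bigl((u-\varepsilon,u+\varepsilon)\bigr)\geq 2F_{\beta_{k}}$, since $F_{\beta_{k}}\to \ME$.

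First I would locate a support point of the finite-temperature Parisi measure $\mu_{\beta,P}$ close to $u$. From the proof of Theorem~\ref{PME}, along some $\beta_{k}\to\infty$ the measures $\beta_{k}x_{\beta_{k},P}(s)\,ds$ converge vaguely on $[0,1]$ to $\nu_{P}$. Since $x_{\beta,P}$ is the cumulative distribution function of $\mu_{\beta,P}$ and $\gamma_{P}(s)=\rho_{P}([0,s])$, this yields $\beta_{k}\mu_{\beta_{k},P}([0,s])\to\rho_{P}([0,s])$ at continuity points, i.e., vague convergence of $\beta_{k}\mu_{\beta_{k},P}$ to $\rho_{P}$ on $[0,1)$. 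If $u\in\mbox{supp}\,\rho_{P}$, then for all $k$ large enough one has $\mu_{\beta_{k},P}\bigl((u-\varepsilon/2,u+\varepsilon/2)\bigr)>0$, producing $u_{k}\in \mbox{supp}\,\mu_{\beta_{k},P}$ with $|u_{k}-u|<\varepsilon/2$.

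The crux is the finite-temperature statement: for fixed $\beta$, any $u'\in\mbox{supp}\,\mu_{\beta,P}$, and $\delta>0$,
\[
\lim_{N\to\infty}\CF_{N,\beta}\bigl((u'-\delta,u'+\delta)\bigr)=2F_{\beta}.
\]
The mechanism is the convergence of the averaged Gibbs overlap distribution $\e\langle 1_{R_{1,2}\in\cdot}\rangle$ to $\mu_{\beta,P}$ (a consequence of differentiability of the Parisi formula and the Ghirlanda--Guerra identities, available for the spherical even mixed $p$-spin model), which gives $\e[Z_{A}/Z^{2}]\to \mu_{\beta,P}(A)>0$ for $A=(u'-\delta,u'+\delta)$. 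Combined with Gaussian concentration of $\log Z_{A}$ and $\log Z$ around their means (each is $O(\beta\sqrt{N})$-Lipschitz in the disorder), this forces self-averaging of $\langle 1_{A}\rangle=Z_{A}/Z^{2}$, whence $\tfrac{1}{N\beta}(\e\log Z_{A}-2\e\log Z)\to 0$, i.e., $\CF_{N,\beta}(A)-2F_{N,\beta}\to 0$.

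Assembling the pieces with $(u_{k}-\varepsilon/2,u_{k}+\varepsilon/2)\subseteq (u-\varepsilon,u+\varepsilon)$ and monotonicity of $\CF$ yields
\[
\liminf_{N}\MCE_{N}\bigl((u-\varepsilon,u+\varepsilon)\bigr)\geq \liminf_{N}\CF_{N,\beta_{k}}\bigl((u_{k}-\varepsilon/2,u_{k}+\varepsilon/2)\bigr)=2F_{\beta_{k}}\longrightarrow 2\ME
\]
as $k\to\infty$. The principal obstacle is the identification of the limiting overlap distribution with $\mu_{\beta,P}$ used in the previous paragraph. A more self-contained workaround is a partition-of-unity argument: cover $[-1,1]$ by $O(1/\delta)$ intervals $I_{j}$, apply $\log Z_{[-1,1]}\leq\log(K/\delta)+\max_{j}\log Z_{I_{j}}$ with Gaussian concentration of $\max_{j}\log Z_{I_{j}}$ to force some $I_{j^{*}}$ to satisfy $\lim_{N}\CF_{N,\beta}(I_{j^{*}})=2F_{\beta}$, and then use the first-order stationarity condition at $\lambda=0$ of the coupled Parisi functional (analogous to $f=0$ on the support, as in Proposition~\ref{prop1}) to pin down $j^{*}$ as an interval intersecting $\mbox{supp}\,\mu_{\beta,P}$.
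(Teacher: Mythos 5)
Your plan reproduces the paper's intermediate steps (essentially Lemmas~\ref{lem1} and~\ref{lem2}): use $\CF_{N,\beta}\leq\MCE_N$, pick a zero-temperature limit along $\beta_k\to\infty$, locate $u_k\in\mbox{supp}\,\mu_{\beta_k,P}$ converging to $u$ via vague convergence of $\beta_k x_{\beta_k,P}\,ds$ to $\nu_P$, and show $\CF_{N,\beta}$ over a neighborhood of a support point of $\mu_{\beta,P}$ converges to $2F_\beta$ by a concentration argument. That part is sound and is exactly what the paper does.

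The gap is the hypothesis under which your key ingredient is valid. The identification of the limiting averaged overlap distribution $\e\langle 1_{R_{1,2}\in\cdot}\rangle$ with the Parisi measure $\mu_{\beta,P}$, which your concentration/contradiction argument requires, is known \emph{for generic models}, i.e.\ when the span of $\{s^p:c_p\neq 0\}\cup\{1\}$ is dense in $C[0,1]$. It is not available for a general mixed even $p$-spin model — in particular not for the pure $p$-spin, or for any mixture in which only finitely many $c_p$ are nonzero — because the Ghirlanda--Guerra identities are obtained only after perturbing the Hamiltonian, and the perturbation changes the model. The paper's Lemma~\ref{lem1} explicitly carries the genericity assumption for exactly this reason, and the actual proof of Theorem~\ref{thm3} contains a nontrivial third step you omit: approximate the given $\xi$ by generic mixtures $\xi_n$ (all $c_{n,p}>0$, $|c_{n,p}-c_p|<2^{-n-p}$), prove via Fatou's lemma, the explicit bounds on the Parisi density $\gamma_n$ from \cite{ArnabChen15}, and strict convexity/uniqueness of the minimizer in Theorem~\ref{PME}, that a subsequence of $(B_n,\nu_n)$ converges to $(B_P,\nu_P)$, pick $u_k\in\mbox{supp}\,\nu_{n_k}\to u$, and then use uniform-in-$p$ convergence of the coefficients to transfer $\MCE_{N,n_k}\to 2\ME_{n_k}$ (from the generic case) back to the original model. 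Without this, the argument proves the theorem only for generic mixtures.

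Your proposed ``self-contained workaround'' does not close the gap. The partition-of-unity bound $\log Z_{[-1,1]}\leq\log(K/\delta)+\max_j\log Z_{I_j}$ plus concentration shows that \emph{some} interval $I_{j^*}$ achieves $\CF_{N,\beta}(I_{j^*})\to 2F_\beta$, but $j^*$ is not controlled, can vary with $N$, and in any case this yields the conclusion only for one point of the support, not for an arbitrary prescribed $u\in\mbox{supp}\,\rho_P$. The subsequent step — pinning $j^*$ to the support via first-order stationarity of the coupled Parisi functional in $\lambda$ — would need a quantitative strict inequality in the Guerra--Talagrand bound off the support, which is a separate (and not elementary) estimate not supplied here; it is related to the content of Lemma~\ref{lem3} and Step~1 in the proof of Theorem~\ref{thm2}, and those arguments again rely on the structure of the zero-temperature minimizer rather than directly on genericity, so they would not substitute for the approximation argument.
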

	
	\begin{proof}[\bf Proof of Theorem \ref{thm1}]
		The assumption $h=0$ implies that $H_{N}(\sigma)=H_N(-\sigma)$ for all $\sigma\in S_N,$ from which 
		\begin{align*}
		\MCE_N \bigl((u-\varepsilon,u+\varepsilon)\bigr)=\MCE_N \bigl((-u-\varepsilon,-u+\varepsilon)\bigr)
		\end{align*} 
		for any $|u|\in \mbox{supp}\rho_P.$
		Thus, it suffices to prove  \eqref{thm1:eq1} only for $u\in \mbox{supp}\rho_P.$ From Theorem \ref{thm3},
		\begin{align*}
		\lim_{N\rightarrow\infty}\MCE_N \bigl((u-\varepsilon,u+\varepsilon)\bigr)&=2\lim_{N\rightarrow\infty}\ME_N .
		\end{align*} 
		For any $\eta>0,$ there exists $N_0$ such that
		\begin{align*}
		\MCE_N \bigl((u-\varepsilon,u+\varepsilon)\bigr)\geq 2 \ME-\eta
		\end{align*}
		for all $N\geq N_0.$ Consequently, using concentration of measure for the Gaussian extrema processes, there exists a positive constant $K$ independent of $N$ such that with probability at most $1-Ke^{-N/K}$, 
		\begin{align}\label{thm1:proof:eq2}
		\frac{1}{N}\max_{R_{1,2}\in(u-\varepsilon,u+\varepsilon)}\bigl(H_N(\sigma^1)+H_N(\sigma^2)\bigr)\geq 2\ME-\frac{\eta}{2}
		\end{align}
		and
		\begin{align}\label{thm1:proof:eq3}
		\max_{\sigma\in S_N}\frac{H_N(\sigma)}{N}\leq \ME+\frac{\eta}{4}
		\end{align}
		for all $N\geq N_0.$ Therefore, from \eqref{thm1:proof:eq2}, there exist $\sigma^1,\sigma^2$ with $R_{1,2}\in(u-\varepsilon,u+\varepsilon)$ such that
		\begin{align*}
		\frac{H_N(\sigma^1)+H_N(\sigma^2)}{N}\geq 2\ME-\frac{\eta}{2}.
		\end{align*} 
		If either $H_N(\sigma^1)\leq N(\ME-\eta)$ or $H_N(\sigma^2)\leq N(\ME-\eta)$, then from this inequality and \eqref{thm1:proof:eq3},
		\begin{align*}
		2\ME-\frac{3\eta}{4}=2\ME+\frac{\eta}{4}-\eta>\frac{H_N(\sigma^1)+H_N(\sigma^2)}{N}\geq 2\ME-\frac{\eta}{2},
		\end{align*}
		which forms a contradiction. Therefore, $\p_N(\eta,(u-\varepsilon,u+\varepsilon))\geq 1-Ke^{-N/K}$ for all $N\geq N_0$ and this clearly implies Theorem \ref{thm1} with an adjusted constant $K.$ 
		
	\end{proof}
	
	For the remainder of this section, we prove Theorem \ref{thm3}. Recall the Parisi formula in Theorem~\ref{PF} and the optimizer $(b_{\beta,P},x_{\beta,P} ).$
	Recall that $\mu_{\beta,P} $ is the measure induced by $x_{\beta,P} .$ We say that the mixed even $p$-spin model is {\it generic} if the linear span of $\{s^p:c_p\neq 0\,\,\mbox{for some $p\in 2\mathbb{N}$}\}\cup \{1\}$ is dense in $C[0,1].$ We need two crucial lemmas. Lemma \ref{lem1} below shows that the coupled free energy is twice of the original free energy if the overlap constraint lies in the support of $\mu_{\beta,P} .$
	
	\begin{lemma}\label{lem1} Consider the generic mixed even $p$-spin model.
		Let $u$ be in the support of $\mu_{\beta,P} $. For any $\varepsilon>0,$ we have
		\begin{align}\label{lem1:eq1}
		\lim_{N\rightarrow\infty}\CF_{N,\beta} \bigl((u-\varepsilon,u+\varepsilon)\bigr)&=2\lim_{N\rightarrow\infty}F_{N,\beta} .
		\end{align}
	\end{lemma}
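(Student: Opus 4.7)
The upper bound $\CF_{N,\beta}((u-\varepsilon,u+\varepsilon))\leq 2F_{N,\beta}$ is immediate upon enlarging the overlap constraint to $[-1,1]$, so the content of the lemma is the matching lower bound. Writing $A=(u-\varepsilon,u+\varepsilon)$ and letting $G_{N,\beta}$ denote the Gibbs probability measure on $S_N$ with density proportional to $e^{\beta H_N}$ with respect to $\lambda_N$, the identity
\begin{align*}
\int_{R_{1,2}\in A} e^{\beta(H_N(\sigma^1)+H_N(\sigma^2))}\,d\lambda_N^{\otimes 2}
=\Bigl(\int_{S_N} e^{\beta H_N}\,d\lambda_N\Bigr)^{2}\cdot G_{N,\beta}^{\otimes 2}(R_{1,2}\in A)
\end{align*}
yields the decomposition
\begin{align*}
\CF_{N,\beta}(A) - 2F_{N,\beta} = \frac{1}{N\beta}\,\e\log G_{N,\beta}^{\otimes 2}(R_{1,2}\in A).
\end{align*}
Since $G_{N,\beta}^{\otimes 2}(R_{1,2}\in A)\in[0,1]$, the right-hand side is $\leq 0$, so it suffices to show it tends to $0$.

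The main ingredient is Panchenko's asymptotic Gibbs measure theorem for generic even $p$-spin models: the Ghirlanda--Guerra identities combined with the Parisi formula force the limiting quenched overlap structure to reproduce the ultrametric hierarchy driven by $\mu_{\beta,P}$. A standard consequence is self-averaging in the form $G_{N,\beta}^{\otimes 2}(R_{1,2}\in A)\to\mu_{\beta,P}(A)$ in probability, valid for any $A$ whose boundary is $\mu_{\beta,P}$-null. One may shrink $\varepsilon$ so that $u\pm\varepsilon$ lie outside the (countable) atom set of $\mu_{\beta,P}$, which is harmless by monotonicity of $A\mapsto\CF_{N,\beta}(A)$. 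Since $u\in\mbox{supp}(\mu_{\beta,P})$, we have $\mu_{\beta,P}(A)>0$, and hence $\frac{1}{N\beta}\log G_{N,\beta}^{\otimes 2}(A)\to 0$ in probability: on the event $\{G_{N,\beta}^{\otimes 2}(A)\geq\mu_{\beta,P}(A)/2\}$, whose probability tends to $1$, this quantity lies in $[O(1/N),0]$.

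To upgrade convergence in probability to convergence of the mean, I would verify uniform integrability of $\frac{1}{N\beta}\log G_{N,\beta}^{\otimes 2}(A)$. The deterministic lower bound
\begin{align*}
G_{N,\beta}^{\otimes 2}(A)\geq \lambda_N^{\otimes 2}(R_{1,2}\in A)\cdot \exp\bigl(-2\beta\,\mbox{osc}(H_N)\bigr),
\end{align*}
combined with the standard volume estimate $\log \lambda_N^{\otimes 2}(R_{1,2}\in A) = -O(N)$, gives $|\log G_{N,\beta}^{\otimes 2}(A)|/(N\beta)\leq O(1) + 2\,\mbox{osc}(H_N)/N$. Borell--Sudakov--Tsirelson applied to the $O(\sqrt{N})$-Lipschitz-in-disorder map $\mathbf{g}\mapsto\max_\sigma|H_N(\sigma)|$ gives sub-Gaussian concentration of $\max|H_N|$ about its mean, of order $O(N)$, hence $\e(\mbox{osc}(H_N)/N)^2 = O(1)$ uniformly in $N$; this furnishes the required uniform integrability. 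Convergence in probability plus uniform integrability yields convergence in $L^1$, completing the proof. The principal difficulty is invoking Panchenko's generic-model self-averaging, a deep consequence of the Ghirlanda--Guerra identities and the Parisi ultrametric picture; the remaining steps are standard Gaussian concentration and volume estimates.
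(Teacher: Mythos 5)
The decomposition $\CF_{N,\beta}(A) - 2F_{N,\beta} = \frac{1}{N\beta}\,\e\log G_{N,\beta}^{\otimes 2}(R_{1,2}\in A)$ is correct and is the identity underlying the paper's proof as well. Your uniform integrability argument via the oscillation of $H_N$ and the volume estimate is also sound. However, the central step contains a genuine error: the claimed ``self-averaging'' $G_{N,\beta}^{\otimes 2}(R_{1,2}\in A)\to\mu_{\beta,P}(A)$ in probability does \emph{not} hold in the replica-symmetry-breaking phase, and it is not a consequence of the Ghirlanda--Guerra identities or the Parisi ultrametric picture. What those give is convergence of the full overlap array \emph{in distribution} (with the disorder averaged), i.e.\ $\e\, G_{N,\beta}^{\otimes 2}(R_{1,2}\in A)\to\mu_{\beta,P}(A)$. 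The random variable $G_{N,\beta}^{\otimes 2}(R_{1,2}\in A)$ itself converges to a genuinely random limit: for example, in a 1RSB phase the limiting mass near the nonzero overlap value is $\sum_\alpha w_\alpha^2$ for Poisson--Dirichlet weights $w_\alpha$, which has nondegenerate fluctuations. Consequently the event $\{G_{N,\beta}^{\otimes 2}(A)\geq\mu_{\beta,P}(A)/2\}$ need not have probability tending to $1$, and your argument for $\frac{1}{N\beta}\log G_{N,\beta}^{\otimes 2}(A)\to 0$ in probability, as written, fails.

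The paper sidesteps this entirely and works only with the annealed quantity $\e\, G_{N,\beta}^{\otimes 2}(A)$ and Gaussian concentration of the two (random) free energies about their means: assuming the lemma fails along a subsequence, concentration of $\CF_{N,\beta}$ and $F_{N,\beta}$ forces $G_{N,\beta}^{\otimes 2}(A)\leq e^{-c N}$ with overwhelming probability, hence $\e\,G_{N,\beta}^{\otimes 2}(A)\to 0$, contradicting $u\in\mathrm{supp}\,\mu_{\beta,P}$. This requires only genericity (to identify the annealed overlap law) and the standard Lipschitz-in-disorder concentration, not any pointwise convergence of the overlap distribution. Your proposal can be salvaged along similar lines — e.g.\ by showing that the distributional limit of $G_{N,\beta}^{\otimes 2}(A)$ is almost surely positive and then quantifying $\liminf_N\p(G_{N,\beta}^{\otimes 2}(A)>\eta)$ as $\eta\downarrow 0$ before invoking your uniform integrability — but the self-averaging statement as you wrote it must be replaced.
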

	
	\begin{proof}
		The assumption that the model is generic guarantees that the limiting law of the overlap $|R_{1,2}|$ is given by the Parisi measure $\mu_{\beta,P}$ under the measure $\e\la\cdot\ra_{\beta} ,$ where $\la\cdot\ra_{\beta} $ is the Gibbs average with respect to the exponential weight $\exp \beta H_N(\sigma)\lambda_N(d\sigma).$ Let $u\in \mbox{supp}\mu_{\beta,P}$ and $\varepsilon>0$ be fixed. Note that the trivial bound holds,
		\begin{align*}
		\CF_{N,\beta} \bigl((u-\varepsilon,u+\varepsilon)\bigr)&\leq 2F_{N,\beta} .
		\end{align*}
		If \eqref{lem1:eq1} is not valid, then there exists some $\eta_0>0$ such that
		\begin{align*}
		\CF_{N,\beta} \bigl((u-\varepsilon,u+\varepsilon)\bigr)<2F_{N,\beta} -\eta_0.
		\end{align*}
		for infinitely many $N$. Consequently, using the Gaussian concentration of measure for both $\CF_{N,\beta} \bigl((u-\varepsilon,u+\varepsilon)\bigr)$ and $F_{N,\beta} $, there exists some constant $K$ independent of $N$ such that with probability at least $1-Ke^{-N/K}$,
		\begin{align*}
		&\frac{1}{\beta N}\log \int_{|R_{1,2}-u|<\varepsilon}\exp \beta\bigl(H_N(\sigma^1)+H_N(\sigma^2)\bigr)\lambda_N(d\sigma^1)\times \lambda_N(d\sigma^2)\\
		&<\frac{2}{\beta N}\log \int \exp \beta H_N(\sigma)\lambda_N(d\sigma)-\frac{\eta_0}{2}
		\end{align*}
		for $N$ sufficiently large. This inequality yields
		\begin{align*}
		\liminf_{N\rightarrow\infty}\e \bigl\la I\bigl(|R_{1,2}-u|< \varepsilon\bigr)\bigr\ra_{\beta} &\leq \lim_{N\rightarrow\infty}\bigl(e^{-\frac{\beta\eta_0N}{2}}+Ke^{-\frac{N}{K}}\bigr)=0.
		\end{align*}
		In other words, $u$ is not in the support of $\mu_{\beta} $, a contradiction. Thus, \eqref{lem1:eq1} must hold.
	\end{proof}
	
	Next, we prove that the result of Lemma \ref{lem1} remains valid for the maximum coupled energy.
	
	\begin{lemma}\label{lem2}
		Assume that the model is generic. If $u\in \mbox{supp}\rho_P ,$ then for any $\varepsilon>0,$
		\begin{align}
		\label{lem2:eq1}
		\lim_{N\rightarrow\infty}\MCE_N \bigl((u-\varepsilon,u+\varepsilon)\bigr)=2\ME .
		\end{align}
	\end{lemma}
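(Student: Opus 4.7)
The plan is to deduce Lemma \ref{lem2} from its positive-temperature analogue Lemma \ref{lem1} by sending $\beta \to \infty$ along the subsequence $(\beta_k)$ constructed in the proof of Theorem \ref{PME}. Two elementary observations make this passage clean. First, for any measurable $A \subset [-1,1]$ and any $\beta > 0$, the fact that $\lambda_N \otimes \lambda_N$ is a probability measure gives the pointwise (in the disorder) bound
$$\frac{1}{N\beta} \log \int_{R_{1,2}\in A} \exp \beta\bigl(H_N(\sigma^1) + H_N(\sigma^2)\bigr)\,d\lambda_N(\sigma^1)d\lambda_N(\sigma^2) \leq \frac{1}{N} \max_{R_{1,2}\in A} \bigl(H_N(\sigma^1) + H_N(\sigma^2)\bigr),$$
so $\CF_{N,\beta}(A) \leq \MCE_N(A)$ for every $N, \beta, A$. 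Second, $\MCE_N(A) \leq 2\,\ME_N$, so $\limsup_{N\to\infty} \MCE_N(A) \leq 2\,\ME$ and only the matching lower bound needs proof.

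For the lower bound, I would first reduce to $\varepsilon$ small enough that $u+\varepsilon/2 < 1$; larger $\varepsilon$ follow from the result for a smaller one together with monotonicity of $\MCE_N$ in its argument. The proof of Theorem \ref{PME} records the vague convergence $\beta_k x_{\beta_k,P}(s)\,ds \to \nu_P$ on $[0,1]$, equivalently $\beta_k \mu_{\beta_k,P} \to \rho_P$ vaguely on $[0,1)$. Applying lower semicontinuity on the relatively compact open set $V := (u-\varepsilon/2, u+\varepsilon/2) \cap [0,1)$ and using $u \in \mbox{supp}\,\rho_P$,
$$\liminf_{k\to\infty} \beta_k \mu_{\beta_k,P}(V) \geq \rho_P(V) > 0,$$
so for all sufficiently large $k$ we may pick $u_k \in \mbox{supp}\,\mu_{\beta_k,P}$ with $|u_k - u| < \varepsilon/2$. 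Applying Lemma \ref{lem1} at temperature $\beta_k$ with the overlap level $u_k$ and window $\varepsilon/2$, and using the inclusion $(u_k - \varepsilon/2, u_k + \varepsilon/2) \subset (u-\varepsilon, u+\varepsilon)$, monotonicity of $\CF_{N,\beta_k}$ in its argument, and the bound $\CF_{N,\beta_k} \leq \MCE_N$, we obtain
$$\liminf_{N\to\infty} \MCE_N\bigl((u-\varepsilon, u+\varepsilon)\bigr) \geq \liminf_{N\to\infty} \CF_{N,\beta_k}\bigl((u-\varepsilon, u+\varepsilon)\bigr) \geq 2 \lim_{N\to\infty} F_{N,\beta_k}.$$
Letting $k \to \infty$ and using $\lim_{\beta\to\infty}\lim_{N\to\infty} F_{N,\beta} = \ME$ (already invoked in the proof of Theorem \ref{PME}) concludes the matching lower bound.

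The main obstacle is the bridging step that translates the zero-temperature support hypothesis $u \in \mbox{supp}\,\rho_P$ into the existence of nearby points in $\mbox{supp}\,\mu_{\beta_k,P}$, since Lemma \ref{lem1} requires the overlap level to lie in the positive-temperature Parisi support. Everything hinges on the vague convergence $\beta_k \mu_{\beta_k,P} \to \rho_P$ recalled from \cite{ArnabChen15}; given that convergence, the remainder is a standard $\beta \to \infty$ interchange combined with the two monotonicity bounds above. The hypothesis that the model be generic enters only through Lemma \ref{lem1}, not through the interchange itself.
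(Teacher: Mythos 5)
Your proof is correct, and it takes the same overall route as the paper (transfer the zero-temperature claim back to the positive-temperature statement of Lemma \ref{lem1}, using the vague convergence $\beta_k x_{\beta_k,P}(s)\,ds\to\nu_P$ to locate $u_k\in\mathrm{supp}\,\mu_{\beta_k,P}$ near $u$), but it closes the argument with a genuinely simpler device. The paper sandwiches $\MCE_N$ between coupled free energies on two nested windows with Dudley-entropy error terms $o_1,o_2$, and then sends $\beta\to\infty$; that two-sided approximation is inherited from the appendix of Chen--Sen and is not elementary. You instead note the two trivial facts that (a) $\lambda_N^{\otimes 2}$ is a probability measure, so $\CF_{N,\beta}(A)\le\MCE_N(A)$ pointwise in the disorder with no entropy correction whatsoever, and (b) $\MCE_N(A)\le 2\ME_N$, so the upper bound is free. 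This reduces the entire lemma to a one-sided comparison plus Lemma \ref{lem1}, avoiding the covering/entropy estimate altogether. The only place you paraphrase the cited convergence --- "equivalently $\beta_k\mu_{\beta_k,P}\to\rho_P$ vaguely" --- deserves a sentence of justification: because $\beta x_{\beta,P}$ and $\gamma_P$ are nondecreasing, vague convergence of $\beta_k x_{\beta_k,P}(s)\,ds$ to $\gamma_P(s)\,ds$ forces pointwise convergence at continuity points of $\gamma_P$, which is precisely the Helly--Bray criterion for vague convergence of the derived Stieltjes measures; with that noted, your claim that points of $\mathrm{supp}\,\mu_{\beta_k,P}$ accumulate at every $u\in\mathrm{supp}\,\rho_P$ is secure and matches the step the paper takes implicitly.
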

	
	\begin{proof}
		Let $u$ be in the support of $\rho_P$ and $\varepsilon>0$ be fixed.
		Recall that $(\beta x_{\beta,P}(s)ds)_{\beta>0}$ converges to $\nu_P $ vaguely from \cite[Theorem 1]{ArnabChen15}. There exists $u_\beta\in \mbox{supp}\mu_{\beta,P} $ such that $\lim_{\beta\rightarrow\infty}u_\beta=u$. Using Dudley's entropy integral, we can approximate the maximum coupled energy via the coupled free energy,
		\begin{align}
		\begin{split}\label{lem2:proof:eq1}
		\CF_{N,\beta} ((u_\beta-\varepsilon/2,u_\beta+\varepsilon/2)\bigr)+o_1(N,\beta)&\leq \MCE_N \bigl((u-\varepsilon,u+\varepsilon)\\
		&\leq \CF_{N,\beta} ((u_\beta-2\varepsilon,u_\beta+2\varepsilon)\bigr)+o_2(N,\beta),
		\end{split}
		\end{align}
		where $o_i(N,\beta)$ satisfies $\lim_{\beta\rightarrow\infty}\lim_{N\rightarrow\infty}o_i(N,\beta)=0$ for $i=1,2.$
		Since a similar argument for this type of the inequality has already appeared in the appendix of \cite{ArnabChen15} with great detail, we omit the proof here. 
		From \eqref{lem1:eq1}, 
		\begin{align*}
		\lim_{\beta\rightarrow \infty}\lim_{N\rightarrow \infty}\CF_{N,\beta} \bigl((u_\beta-2\varepsilon,u_\beta+2\varepsilon)\bigr)=2\limsup_{\beta\rightarrow\infty}\lim_{N\rightarrow\infty}F_{N,\beta} =2\ME ,\\
		\lim_{\beta\rightarrow \infty}\lim_{N\rightarrow \infty}\CF_{N,\beta} \bigl((u_\beta-\varepsilon/2,u_\beta+\varepsilon/2)\bigr)=2\limsup_{\beta\rightarrow\infty}\lim_{N\rightarrow\infty}F_{N,\beta} =2\ME .
		\end{align*}
		These equations combined with \eqref{lem2:proof:eq1} lead to \eqref{lem2:eq1}.	
	\end{proof}

	\begin{proof}[\bf Proof of Theorem \ref{thm3}] Let $\xi$ and $h$ be fixed. Recall the optimizer $(B_P ,\nu_P )$ associated to $\xi$ and $h$ in Theorem \ref{PME}. For each $n\geq 1$, let $(c_{n,p})_{p\in 2\mathbb{N}}$ be a sequence satisfying $0<c_{n,p}$ and $|c_p-c_{n,p}|<2^{-n-p}$ for all $p\in 2\mathbb{N}$. Define $\xi_n(s)=\sum_{p\in 2\mathbb{N}}c_{n,p}s^p$. Let $X_{N,n}$ be the mixed even $p$-spin Hamiltonian corresponding to $\xi_n$ and set $$
		H_{N,n}(\sigma):=X_{N,n}(\sigma)+h\sum_{i=1}^N\sigma_i.
		$$
		Note that the assumption $c_{n,p}>0$ for all $n\in\mathbb{N}$ and $p\in 2\mathbb{N}$ guarantees that $H_{N,n}$ is generic. Denote by $(B_n,\nu_n)$ the optimizer  associated to $\xi_n$ and $h$ in Theorem \ref{PME}.
		
		We claim that there exists a subsequence $(B_{n_k},\nu_{n_k})_{k\geq 1}$ such that 
		\begin{align*}
		\lim_{k\rightarrow\infty}B_{n_k}&=B_P ,\\
		\lim_{k\rightarrow\infty}\nu_{n_k}&=\nu_P \,\,\mbox{vaguely on $[0,1]$}.
		\end{align*}
		Recall Theorem \ref{PF}. Denote by $(b_{\beta,n},x_{\beta,n})$ the optimizer the Parisi formula for the free energy associated to $\xi_n$ and $h$. Recall two key inequalities from \cite[Lemma 2]{ArnabChen15}, 
		\begin{align*}
		\beta x_{\beta,n}(s)\leq \frac{2\sqrt{\xi_n'(1)}}{\xi_n(1)-\xi_n(s)},\,\,s\in[0,1)
		\end{align*}
		and
		\begin{align*}
		\int_0^1\beta x_{\beta,n}(s)ds\leq 2\sqrt{\xi_n'(1)}\Bigl(\frac{1}{\xi_n(1)-\xi_n(1/2)}+\frac{1}{\xi_n'(1/2)}\Bigr).
		\end{align*}
		From \cite[Theorem 1]{ArnabChen15}, sending $\beta$ in these two inequalities to infinity yields
		\begin{align*}
		\gamma_{n}(s)\leq \frac{2\sqrt{\xi_n'(1)}}{\xi_n(1)-\xi_n(s)},\,\,\forall s\in[0,1)
		\end{align*}
		and
		\begin{align*}
		\nu_n([0,1])&\leq 2\sqrt{\xi_n'(1)}\Bigl(\frac{1}{\xi_n(1)-\xi_n(1/2)}+\frac{1}{\xi_n'(1/2)}\Bigr),
		\end{align*}
		where $\gamma_n$ is the density of $\nu_n$ on $[0,1).$ Since $|c_{n,p}-c_p|<2^{-n-p}$, the first inequality implies that $\gamma_n$ is uniformly bounded on any interval $[0,s]$ for $s\in(0,1)$ and the second inequality means that $\nu_n$ is a sequence of bounded measures on $[0,1].$ From these, we can pass to subsequences such that $(\gamma_{n_k})_{k\geq 1}$ converges to some $\gamma_0$ vaguely on $[0,1)$ and $(\nu_{n_k})_{k\geq 1}$ converges to some $\nu_0$ vaguely on $[0,1]$, where
		$$
		\nu_0(ds)=\gamma_0(s)1_{[0,1)}(s)ds+1_{\{1\}}(s)\nu_0(\{1\})
		$$
		For $n\geq 0,$ define
		\begin{align*}
		\mathcal{P}_n(B,\nu)&=\frac{h^2}{B-\hat{\nu}_n (0)}+\int_0^{1} \frac{\xi_n''(s)}{B-\hat{\nu}_n (s)}ds+B-\int_0^1s\xi_n''(s)\nu_n(ds),
		\end{align*}
		where $\hat{\nu}_n(s):=\int_q^1\xi_n''(r)\nu(dr)$ for $q\in[0,1].$ Recall that $\ME$ is the limiting maximum energy of $H_N$ associated to $\xi$ and $h.$ Denote by $\ME_{n}$ the maximum energy of $H_{N,n}$ associated to $\xi_n$ and $h.$
		From the weak convergence of $(\nu_{n_k})_{k\geq 1}$ and Fatou's lemma,
		\begin{align}
		\begin{split}
		\label{thm1:proof:eq1}
		\ME  &=\lim_{k\rightarrow\infty}\ME_{n_k}\\
		&=\lim_{k\rightarrow\infty}\mathcal{P}_{n_k}(B_{n_k},\nu_{n_k})\\
		&\geq \frac{h^2}{B_0-\hat{\nu}_0 (0)}+\int_0^{1} \frac{\xi''(q)}{B_0-\hat{\nu}_0 (q)}dq+B_0-\int_0^1s\xi''(s)\nu_0(ds)\\
		&=\mathcal{P} (B_0,\nu_0),
		\end{split}
		\end{align}
		where $B_0:=\limsup_{k\rightarrow\infty}B_{n_k}.$ Note that $(B_n,\nu_n)\in \mathcal{U}$. This means $\hat{\nu}_n(0)<B_n$ for all $n\geq 1.$ It follows that $\hat{\nu}_0(0)\leq B_0$. Now from the first and third terms of the third line of \eqref{thm1:proof:eq1}, we can further conclude that $\hat{\nu}_0(0)<B_0<\infty$. In other words, $(B_0,\nu_0)\in \mathcal{U}.$
		Consequently, from the Parisi formula for $\ME $ in Theorem \ref{PME}, \eqref{thm1:proof:eq1} implies that $(B_0,\nu_0)$ is a minimizer and thus, $(B_0,\nu_0)=(B_P ,\nu_P ).$ This finishes the proof of our claim.
		
		Next, let $u\in\mbox{supp}\nu_P.$ Recall that $\MCE_N$ is the maximum coupled energy corresponding to $\xi$ and $h.$ Denote by $\MCE_{N,n}$ the maximum couped energy associated to $\xi_n$ and $h$. Using the subsequence $(\nu_{n_k})_{k\geq 1}$ obtained in the previous claim, we pick $u_{k}\in \mbox{supp}\nu_{n_k}$ such that $\lim_{k\rightarrow\infty}u_{k}=u.$ From this,
		\begin{align*}
		\lim_{N\rightarrow\infty}\MCE_N \bigl((u-\varepsilon,u+\varepsilon)\bigr)
		&=\lim_{k\rightarrow\infty}\lim_{N\rightarrow\infty}\MCE_{N,n_k}\bigl((u_k-\varepsilon,u_k+\varepsilon)\bigr)\\
		&=2\lim_{k\rightarrow\infty}\ME_{n_k}\\
		&=2\ME  ,
		\end{align*}
		where the first and third equalities hold since $(c_{n,p})_{n\geq 1}$ converges to $c_p$ uniformly over $p$ and the second equality used Lemma \ref{lem2}.
		This completes our proof.
	\end{proof}

	\subsection{Proof of Proposition \ref{prop0} and Theorem \ref{thm2}}
	
	Recall the constant $s_P$ from \eqref{GP:eq1}. Define
	\begin{align}\label{lem3:eq1}
	c(u)&=\nu_P\bigl([0,1]\bigr)^2\bigl(h^2+\xi'(u)\bigr)-u
	\end{align}
	for $u\in[-s_P,s_P].$ Recall the functions $f$ and $\bar f$ from Proposition \ref{prop1} associated to the minimizer $(B_P,\nu_P)$. We first establish a crucial lemma.
	\begin{lemma}
		\label{lem3}
		We have that
		\begin{align}
		\label{lem3:eq2}
		c(u)=f(u)
		\end{align}
		on $[0,1].$
		In addition,
		\begin{align}\label{thm2:proof:eq2}
		c(s_P)=0
		\end{align}
		and if $s_P\in(0,1]$, then
		\begin{align}\label{thm2:proof:eq3}
		c'(s_P)\leq 0.
		\end{align}
	\end{lemma}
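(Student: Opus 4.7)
My plan is to derive all three assertions from Proposition~\ref{prop1} applied to the minimizer $(B_P, \nu_P)$, exploiting the structural fact that $\rho_P$ vanishes on $[0, s_P)$ by the very definition of $s_P$. For the identity \eqref{lem3:eq2} (which I interpret on the natural domain $[0, s_P]$ where $c$ is defined), the vanishing of $\gamma_P$ on $[0, s_P)$ gives $\hat\nu_P(s) \equiv \hat\nu_P(0)$ for all $s \in [0, s_P]$. Combined with \eqref{PME:eq3}, namely $B_P - \hat\nu_P(0) = 1/\nu_P([0,1])$, the formula defining $f$ collapses on $[0, s_P]$ to $f(u) = \nu_P([0,1])^2 h^2 + \nu_P([0,1])^2 \int_0^u \xi''(s)\,ds - u = \nu_P([0,1])^2(h^2 + \xi'(u)) - u$, where I used $\xi'(0) = 0$ since every $p$-spin interaction in $\xi$ has $p \geq 2$. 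This is precisely $c(u)$.

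For $c(s_P) = 0$, I split according to the value of $s_P$. If $s_P = 1$, then $\rho_P$ is trivial, $\hat\nu_P$ is globally constant on $[0,1)$, so the identity above extends to all of $[0, 1]$; combined with $f(1) = 0$ supplied by Proposition~\ref{prop1}, this yields $c(1) = 0$. If $s_P \in (0, 1)$, the inclusion $s_P \in \mathrm{supp}\,\rho_P$ together with Proposition~\ref{prop1} forces $\bar f(s_P) = 0$, and combined with $\bar f \geq 0$ on $[0, 1]$ this makes $s_P$ an interior minimum of $\bar f$. First-order optimality $\bar f'(s_P) = -f(s_P)\xi''(s_P) = 0$, together with $\xi''(s_P) > 0$ (valid since $s_P > 0$ and at least one coefficient $c_p$ is positive), then forces $f(s_P) = 0$, hence $c(s_P) = 0$ by the identity just established. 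The remaining edge case $s_P = 0$ is handled by Proposition~\ref{prop0}(ii), which forces $h = 0$, and then $c(0) = \nu_P([0,1])^2 h^2 = 0$ is immediate.

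For $c'(s_P) \leq 0$ when $s_P \in (0, 1]$, I exploit a second-order optimality analysis of $\bar f$ at its minimum. Continuity of $\hat\nu_P$ on $[0,1)$ (which follows from local boundedness of $\gamma_P$ away from $1$) ensures $f \in C^1([0,1))$ and consequently $\bar f \in C^2([0,1))$. Taylor-expanding $\bar f$ around $s_P$ (an interior minimum if $s_P \in (0,1)$, a right-boundary minimum if $s_P = 1$) and using $\bar f(s_P) = 0 = \bar f'(s_P)$ together with the local nonnegativity $\bar f \geq 0$ forces $\bar f''(s_P) \geq 0$. Computing $\bar f''(s_P) = -f'(s_P)\xi''(s_P) - f(s_P)\xi'''(s_P) = -f'(s_P)\xi''(s_P)$ (the second term vanishes because $f(s_P) = 0$) yields $f'(s_P) \leq 0$. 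Since $f$ and $c$ coincide on $[0, s_P]$ (globally on $[0, 1]$ when $s_P = 1$), differentiation gives $c'(s_P) = f'(s_P) \leq 0$. The main subtlety throughout is justifying enough regularity of $\bar f$ at $s_P$ to invoke the second-order condition; this hinges on continuity of $\hat\nu_P$, which holds even though $\gamma_P$ itself may have a jump at $s_P$ where the support of $\rho_P$ begins.
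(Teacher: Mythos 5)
Your proof follows the same skeleton as the paper's: reduce \eqref{lem3:eq2} to the vanishing of $\gamma_P$ on $[0,s_P)$ together with \eqref{PME:eq3}, then extract \eqref{thm2:proof:eq2} and \eqref{thm2:proof:eq3} from first- and second-order optimality of $\bar f$ at $s_P$ via Proposition~\ref{prop1}. Your reading of ``$[0,1]$'' as $[0,s_P]$ is the correct interpretation, since $\hat\nu_P$ is constant only on $[0,s_P]$. Your treatment of $s_P\in(0,1)$ matches the paper. For $s_P=1$ you deviate from the paper (which invokes \eqref{RS:eq1} and \eqref{thm:RS1} to compute $c'(1)=\xi''(1)/(\xi'(1)+h^2)-1\leq 0$ directly) and instead run the boundary second-order argument, using $f(1)=0$ from Proposition~\ref{prop1} to force $\bar f'(1)=0$ before invoking $\bar f''(1)\geq 0$; this works and is arguably more uniform across cases.

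There is, however, one genuine gap: your disposal of the edge case $s_P=0$ is circular. Proposition~\ref{prop0}(ii) is not an independent input --- in the paper its proof \emph{is} a consequence of \eqref{thm2:proof:eq2} of this very lemma (one observes that $c(0)=\nu_P([0,1])^2h^2>0$ whenever $h\neq 0$, which combined with $c(s_P)=0$ forces $s_P>0$). Invoking it here assumes the conclusion you need. The non-circular argument, which your own optimality framework already supports, runs as follows: if $s_P=0$ then $0\in\mbox{supp}\,\rho_P$, so $\bar f(0)=0$ by Proposition~\ref{prop1}; since $\bar f\geq 0$, the point $0$ is a boundary minimum and the one-sided first-order condition gives $\bar f'(0^+)=-f(0)\xi''(0)\geq 0$, i.e.\ $f(0)\xi''(0)\leq 0$. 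If $\xi''(0)>0$ this forces $f(0)=h^2(B_P-\hat\nu_P(0))^{-2}\leq 0$, hence $h=0$ and $c(0)=0$. (If $\xi''(0)=0$ one instead argues that $f(0)>0$, by continuity of $f$ and nonnegativity of both $\bar f$ and $\xi''$, would force $\int_0^s f\,\xi''\,dr=0$ for all small $s$, hence $\xi''\equiv 0$ near $0$, impossible for an analytic $\xi$ with $\sum_p c_p=1$.) Replace your appeal to Proposition~\ref{prop0}(ii) with this and the proof is sound.
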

	
	\begin{proof}
	 From $\nu_P([0,s_P))=0$ and \eqref{PME:eq3}, \eqref{lem3:eq2} holds. To see \eqref{thm2:proof:eq2} and \eqref{thm2:proof:eq3}, if $\mbox{supp}\rho_P=\emptyset,$ then $s_P=1.$ Since in this case the Parisi measure is replica symmetric,  we obtain \eqref{thm2:proof:eq2} from \eqref{RS:eq1}.
	    On the other hand, the discussion before Theorem \ref{thm:RS} implies \eqref{thm:RS1}. From this, \eqref{thm2:proof:eq3} follows since
		\begin{align*}
		c'(1)&=\frac{\xi''(1)}{\xi'(1)+h^2}-1\leq 0.
		\end{align*}
		
		 Next, if $\mbox{supp}\rho_P\neq \emptyset,$ then $s_P\in \mbox{supp}\rho_P$ and $\bar{f}(s_P)=0$ from Proposition~\ref{prop1}. In the case when $s_P\in (0,1)$, the optimality of $s_P$ implies $c(s_P)=f(s_P)=-\bar f'(s_P)=0$ and also $c'(s_P)=-\bar{f}''(s_P)\leq 0.$ These give \eqref{thm2:proof:eq2} and \eqref{thm2:proof:eq3}. If $s_P=0,$ then again by optimality of $s_P,$
		$$
		0\leq \bar{f}'(s_P)=-f(s_P)=-\frac{h^2}{(B_P-\hat{\nu}_P(0))^2}.
		$$
		This inequality holds only when $h=0$, from which $c(s_P)=f(s_P)=0.$ This completes our proof.
		
	\end{proof}
	
    \begin{proof}[\bf Proof of Proposition \ref{prop0}]
    Assume that $h=0.$ If $\xi(s)=s^2,$ then $s_P=1$ by \eqref{RS:eq1}. Suppose that $c_p>0$ for at least one even $p\geq 4.$ If $s_P>0,$ then from \eqref{thm2:proof:eq3},
    \begin{align*}
    c'(u)&=\nu_P\bigl([0,1]\bigr)^2\xi''(u)-1\leq \nu_P\bigl([0,1]\bigr)^2\xi''(s_P)-1=c'(s_P)\leq 0
    \end{align*}
    for $u\in[0,s_P].$ Since evidently $c(0)=0$ and $c(s_P)=0$ from \eqref{thm2:proof:eq2},  the above inequality implies that $c(u)=0$ on $[0,s_P].$ However, since $\xi$ is analytic on $(-1,1),$ this forces that 
    $$
    \xi'(u)=\frac{u}{\nu_P\bigl([0,1]\bigr)^2}
    $$
    on $[0,1]$, which contradicts the assumption. This completes the proof of Proposition \ref{prop0}$(i).$ As for Proposition \ref{prop0}$(ii)$, it can be easily obtained by noting that $s_P$ must satisfy $c(s_P)=0$ by \eqref{thm2:proof:eq2} and that $c(0)>0.$ 
    
    \end{proof}
	
	\begin{proof}[\bf Proof of Theorem \ref{thm2}]  From the assumption $h\neq 0,$ $s_P>0$ by the above remark. The statement of Theorem \ref{thm2}$(i)$ follows immediately via an identical reasoning as the proof of Theorem \ref{thm1} gives \eqref{thm2:eq1}. As for the proof of Theorem \ref{thm2}$(ii)$, it relies on the statement that for any $0<\varepsilon_0<s_P$, there exists some $\eta>0$ such that for every $u\in[-1,s_P-\varepsilon_0]$,
		\begin{align}\label{thm2:proof:eq0}
		\lim_{\varepsilon\downarrow 0}\limsup_{N\rightarrow\infty}\MCE_N \bigl((u-\varepsilon,u+\varepsilon)\bigr)\leq 2\ME -\eta.
		\end{align} 
		If this is valid, a standard covering argument (see, e.g., \cite{C15} or \cite{Tal06}) yields Theorem \ref{thm2}$(ii).$ Indeed, from \eqref{thm2:proof:eq0}, for any $u\in[-1,s_P-\varepsilon_0],$ there exist $\varepsilon_u>0$ and $N_u\geq 1$ such that 
		\begin{align}\label{thm2:proof:eq-1}
		\MCE_N \bigl((u-\varepsilon_u,u+\varepsilon_u)\bigr)\leq 2\ME -\frac{\eta}{2}
		\end{align} 
		for all $N\geq N_u.$ Since $[-1,s_P-\varepsilon]$ is a compact set, it can be covered by $(u_i-\varepsilon_{u_i},u_i+\varepsilon_{u_i})$ for $i=1,\ldots,n$ for some $u_1,\ldots,u_n\in [-1,s_P-\varepsilon_0].$ Therefore, from \eqref{thm2:proof:eq-1}, 
		\begin{align*}
		\MCE_N\bigl([-1,s_P-\varepsilon_0]\bigr)\leq 2\ME-\frac{\eta}{2}.
		\end{align*}
		for all $N\geq N_0:=\max_{1\leq i\leq n}N_{u_i}.$ Next from concentration of measure for Gaussian extrema processes, there exists $K>0$ such that with probability at least $1-Ke^{-N/K}$,
		\begin{align}\label{thm2:proof:eq-2}
		\frac{1}{N}\max_{R_{1,2}\in [-1,s_P-\varepsilon_0]}\bigl(H_N(\sigma^1)+H_N(\sigma^2)\bigr)\leq 2\ME-\frac{\eta}{4}.
		\end{align}
		If there exist $\sigma^1,\sigma^2$ such that $R_{1,2}\in [-1,s_P-\varepsilon_0]$, $H_N(\sigma^1)\geq N(\ME-\eta/16)$, and $H_N(\sigma^2)\geq N(\ME-\eta/16)$, then
		\begin{align*}
		\frac{H_N(\sigma^1)+H_N(\sigma^2)}{N}\geq 2\ME-\frac{\eta}{8}.
		\end{align*}
		From \eqref{thm2:proof:eq-2}, this means that $\p_N(\eta/16,[-1,s_P-\varepsilon_0])\leq Ke^{-N/K}$ for all $N\geq N_0$ and this clearly implies \eqref{thm2:eq2}.
		
		In what follows, we establish \eqref{thm2:proof:eq0} by four steps.

		{\bf Step 1:} We claim that there exists some $\eta_1>0$ such that for any $u\in [-s_P,s_P-\varepsilon_0],$
		\begin{align}\label{thm2:proof:eq7}
		\lim_{\varepsilon\downarrow 0}\limsup_{N\rightarrow\infty}\MCE_N \bigl((u-\varepsilon,u+\varepsilon)\bigr)&\leq 2\ME -\eta_1.
		\end{align}
		Note that for $u\in[-s_P,s_P]$, a direct differentiation yields that
		\begin{align*}
		\partial_\lambda\mathcal{P}_u (B_P ,\lambda,\nu_P )\Big|_{\lambda=0}&=\frac{h^2}{\bigl(B_P -\hat{\nu}_P  (0)\bigr)^2}+\int_0^{|u|} \frac{\iota\xi''(s)ds}{\bigl(B_P -\hat{\nu}_P  (s)\bigr)^2}-u=c(u),
		\end{align*}
		where $\iota$ is the sign of $u$ and $c(u)$ is defined in \eqref{lem3:eq1}. In addition, it can be easily derived that 
		\begin{align*}
		|\partial_{\lambda\lambda}\mathcal{P}_u (B_P ,\lambda,\nu_P )|&\leq M
		\end{align*}
		for all $|\lambda|\leq K:=(B_P -\hat{\nu}_P (0))/2.$
		Using Taylor's formula, for any $|\lambda|\leq K,$
		\begin{align*}
		\mathcal{P}_u (B_P ,\lambda,\nu_P )&\leq \mathcal{P}_u (B_P ,0,\nu_P )+c(u)\lambda+\frac{\eta \lambda^2}{2}.
		\end{align*}
		By varying $|\lambda|\leq K$ in this inequality, we can find $K'>0$ small enough such that for any $u\in [-s_P,s_P],$
		\begin{align}
		\begin{split}\label{thm2:proof:eq5}
		\lim_{\varepsilon\downarrow 0}\limsup_{N\rightarrow\infty}\MCE_N \bigl((u-\varepsilon,u+\varepsilon)\bigr)&\leq \min_{|\lambda|\leq K}\mathcal{P}_u (B_P ,\lambda,\nu_P )\\
		&\leq 2\ME -K'c(u)^2,
		\end{split}
		\end{align}
		where the first inequality used \eqref{lem5:eq1} and the second inequality relied on the fact that for $u\in[-s_P,s_P]$, 
		\begin{align*}
		\mathcal{P}_u (B_P ,0,\nu_P )=\mathcal{P} (B_P ,\nu_P )=2\ME .
		\end{align*}
		Note that $c'(u)\leq c'(s_P)\leq 0$ for $u\in[-s_P,s_P]$ by \eqref{thm2:proof:eq3}. This and \eqref{thm2:proof:eq2} together imply $$
		c(u)\leq c(s_P-\varepsilon)<c(s_P)=0$$ on $[-s_P,s_P-\varepsilon_0].$ From this and \eqref{thm2:proof:eq5}, \eqref{thm2:proof:eq7} follows with $\eta_1:=\min_{u\in[-s_P,s_P-\varepsilon_0]}K'c(u)^2>0$.
		
		{\bf Step 2:} We check that there exist some $\varepsilon_0'>0$ and $\eta_2>0$ such that for $u\in[-s_P-\varepsilon_0',-s_P]$, 	\begin{align}\label{thm2:proof:eq6}
		\lim_{\varepsilon\downarrow 0}\limsup_{N\rightarrow\infty}\MCE_N \bigl((u-\varepsilon,u+\varepsilon)\bigr)&\leq 2\ME -\eta_2.
		\end{align}
		Note that if $s_P=1$, then Step 1 completes our proof since the overlap satisfies $|R_{1,2}|\leq 1.$ In what follows, we assume that $s_P<1.$ Observe that $(u,\lambda)\mapsto\mathcal{P}_u (B_P ,\lambda,\nu_P )$ is continuous function on $[-1,0]\times [-K,K].$ We can choose $\varepsilon_0'>0$ small enough such that
		\begin{align*}
		\max_{u\in[-s_P-\varepsilon_0',-s_P]}\min_{|\lambda|\leq K}\mathcal{P}_u (B_P ,\lambda,\nu_P )\leq \min_{|\lambda|\leq K}\mathcal{P}_{-s_P} (B_P ,\lambda,\nu_P )+\frac{\eta_1}{2}.
		\end{align*}
		From \eqref{thm2:proof:eq5}, our claim \eqref{thm2:proof:eq6} is valid with $\eta_2=\eta_1/2.$
		
		{\bf Step 3:} Assume $u\in[-1,-s_P-\varepsilon_0']$. Letting $(B,\lambda,\nu)=(B_P ,0,\nu_P )$ in \eqref{lem5:eq1} yields
		\begin{align*}
		\mathcal{P}_u (B_P ,0,\nu_P )&=\int_0^{1} \frac{\xi''(q)}{B_P -\hat{\nu}_P  (q)}dq+B_P -\int_0^1s\xi''(s)\nu_P (ds)+
		\frac{h^2}{B-\hat{\nu}_P  (|u|)}.
		\end{align*}
		In view of Theorem \ref{PME}, 
		\begin{align}\label{thm2:proof:eq1}
		\lim_{\varepsilon\downarrow 0}\limsup_{N\rightarrow\infty}\MCE_N \bigl((u-\varepsilon,u+\varepsilon)\bigr)&\leq 2\ME -\eta_3,
		\end{align} 	
		where $\eta_3:=\min_{r\in[-1,-s_P-\varepsilon_0']}g(r)>0$ for
		$$
		g(r):=\frac{h^2\int_0^{|r|}\xi''(s)\nu_P (ds)}{\bigl(B_P -\hat{\nu}_P (0)\bigr)\bigl(B_P -\hat{\nu}_P (r)\bigr)}.
		$$
		
		{\bf Step 4:} Combining \eqref{thm2:proof:eq7}, \eqref{thm2:proof:eq6}, and \eqref{thm2:proof:eq1} and letting $\eta=\min(\eta_1,\eta_2,\eta_3)>0$ validate \eqref{thm2:proof:eq0}.	
			
	\end{proof}

	\subsection{Proof of Proposition \ref{thm1.1}}
	
	Our proof adapts an identical argument as \cite{CHL16}. The key ingredient is played by the so-called chaotic property in disorder for the maximum energy. For a fixed $k\in\mathbb{N},$ denote by $X_{N}^1,\ldots, X_{N}^k$ i.i.d. copies of $X_N.$ Let $t\in[0,1]$. Define the Hamiltonians
	\begin{align*}
	H_{N,t}^\ell(\sigma^\ell)&=\sqrt{t}X_N(\sigma^\ell)+\sqrt{1-t}X_{N}^\ell(\sigma^\ell)+h\sum_{i=1}^N\sigma_i^\ell
	\end{align*}
	for $\sigma^\ell\in S_N.$ For any measurable $A\subset [-1,1]$, we consider the maximum coupled energy,
	\begin{align*}
	\MCE_{N,t}(A)&:=\frac{1}{N}\e \max_{R(\sigma^\ell,\sigma^{\ell'})\in A}\bigl(H_{N,t}^\ell(\sigma^\ell)+H_{N,t}^{\ell'}(\sigma^{\ell'})\bigr).
	\end{align*}For $t\in[0,1],$ define
	\begin{align*}
	c_t(u)&=\nu_P\bigl([0,1]\bigr)^2\bigl(t\xi'(u)+h^2\bigr)-u
	\end{align*}
	on $[-s_P,s_P].$ Recall $c(u)$ from \eqref{lem3:eq1}. If $t=1,$ then $H_N=H_{N,t}^\ell$, $\MCE_N=\MCE_{N,t},$ and $c_1(u)=c(u).$ While Theorem \ref{thm3} says that the maximum coupled energy $\MCE_N$ converges to $2\ME$ if the overlap is restricted to any point in the support of the Parisi measure, chaos in disorder states that as long as $t\in(0,1)$,  $\MCE_{N,t}\bigl((u-\varepsilon,u+\varepsilon)\bigr)$ converges to $2\ME$ only if we take $u$ equal to a single point $u_{t}$. This result is established in Proposition 7 and Theorem 7 from \cite{ArnabChen15}, for which we recall as follows. 
	
	\begin{proposition}\label{lem4}
		Suppose $t\in(0,1)$ and $1\leq \ell<\ell'\leq k.$ For any $\varepsilon>0,$ there exists some $\eta>0$ such that
		\begin{align*}
		\lim_{N\rightarrow\infty}\MCE_{N,t}\bigl([-1,1]\setminus(u_t-\varepsilon,u_t+\varepsilon)\bigr)\leq 2\ME-\eta,
		\end{align*}
		where $u_t$ is the unique solution to $c_t(u)=0$ and it satisfies $u_t=0$ if $h=0$ and $u_t\in (0,s_P)$ if $h\neq 0.$	
	\end{proposition}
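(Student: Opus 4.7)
The plan is to mirror the proof of Theorem \ref{thm2}: derive the natural chaos version of the RSB bound (Theorem \ref{lem5}) for $\MCE_{N,t}$, then optimize in the Lagrange parameter $\lambda$ to extract a uniform gap outside $(u_t-\varepsilon,u_t+\varepsilon)$.

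First, I would establish a chaos analogue of Theorem \ref{lem5}. Since the two replicas in $\MCE_{N,t}$ have cross-covariance $Nt\xi(R(\sigma^\ell,\sigma^{\ell'}))$ but self-covariances $N\xi(1)$, the Guerra--Talagrand interpolation of \cite{ArnabChen15} goes through with the substitution $\xi''\mapsto t\xi''$ in precisely the shared-overlap integral $\int_0^{|u|}$ of the two-replica functional; call the resulting zero-temperature functional $\mathcal{P}_{u,t}(B,\lambda,\nu)$. Passing from positive to zero temperature as in the proof of Theorem \ref{lem5} gives
\begin{align*}
\lim_{\varepsilon\downarrow 0}\limsup_{N\to\infty}\MCE_{N,t}\bigl((u-\varepsilon,u+\varepsilon)\bigr)\leq\mathcal{P}_{u,t}(B,\lambda,\nu)
\end{align*}
for every admissible triple. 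Evaluating at the single-replica minimizer $(B_P,0,\nu_P)$ yields
\begin{align*}
\mathcal{P}_{u,t}(B_P,0,\nu_P)=2\ME-(1-t)\int_0^{|u|}\frac{\xi''(q)}{B_P-\hat{\nu}_P(q)}\,dq,
\end{align*}
while a direct differentiation, combined with the crucial identity $B_P-\hat{\nu}_P(q)=1/\nu_P([0,1])$ on $[0,s_P]$ (which follows from $\nu_P([0,s_P))=0$ and \eqref{PME:eq3}), gives
\begin{align*}
\partial_\lambda\mathcal{P}_{u,t}(B_P,\lambda,\nu_P)\Big|_{\lambda=0}=c_t(u)\qquad\text{for }u\in[-s_P,s_P].
\end{align*}

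Second, I would pin down $u_t$ as the unique zero of $c_t$. Since $\xi''$ is even and nondecreasing on $[0,\infty)$ and $\nu_P([0,1])^2\xi''(s_P)\leq 1$ by Lemma \ref{lem3}, $c_t'(u)=t\nu_P([0,1])^2\xi''(u)-1\leq t-1<0$ throughout $[-s_P,s_P]$. Together with $c_t(0)=\nu_P([0,1])^2 h^2\geq 0$ and $c_t(s_P)=c(s_P)-(1-t)\nu_P([0,1])^2\xi'(s_P)=-(1-t)\nu_P([0,1])^2\xi'(s_P)\leq 0$, this forces a unique root $u_t\in[0,s_P]$, with $u_t=0$ iff $h=0$ and $u_t\in(0,s_P)$ when $h\neq 0$.

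Finally, a second-order Taylor expansion of $\lambda\mapsto\mathcal{P}_{u,t}(B_P,\lambda,\nu_P)$ around zero (uniform in $u$) produces a constant $K'>0$ with
\begin{align*}
\lim_{\varepsilon'\downarrow 0}\limsup_{N\to\infty}\MCE_{N,t}\bigl((u-\varepsilon',u+\varepsilon')\bigr)\leq 2\ME-(1-t)\int_0^{|u|}\frac{\xi''(q)}{B_P-\hat{\nu}_P(q)}\,dq-\frac{c_t(u)^2}{2K'}.
\end{align*}
The combined gap on the right is strictly positive for every $u\neq u_t$: if $u\neq 0$ the integral term is positive, while at $u=0$ (which differs from $u_t$ precisely when $h\neq 0$) one has $c_t(0)=\nu_P([0,1])^2 h^2>0$. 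By continuity in $u$ and compactness of $[-1,1]\setminus(u_t-\varepsilon,u_t+\varepsilon)$, the gap admits a uniform lower bound $\eta>0$, and a finite subcover argument in the spirit of the proof of Theorem \ref{thm2}$(ii)$ delivers the proposition. The main technical hurdle is establishing the chaos RSB bound with the correct $t$-weighting in the coupling terms; once that is in place, the remainder is a transcription of the proof of Theorem \ref{thm2} with $c$ replaced by $c_t$.
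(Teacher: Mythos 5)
The paper does not actually prove Proposition~\ref{lem4}; the text immediately before the statement simply cites it to Proposition~7 and Theorem~7 of \cite{ArnabChen15}. So your reconstruction is a genuinely different route from what the paper presents, and a useful one: you mirror the proof of Theorem~\ref{thm2}$(ii)$ using a chaos analogue of the RSB bound of Theorem~\ref{lem5}.

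The parts you can check against the present paper are all correct. The monotonicity $c_t'(u)=t\nu_P([0,1])^2\xi''(u)-1\leq t\nu_P([0,1])^2\xi''(s_P)-1\leq t-1<0$ on $[-s_P,s_P]$ uses $c'(s_P)\leq 0$ from Lemma~\ref{lem3} and the fact that $\xi''$ is even and nondecreasing on $[0,1]$; together with $c_t(0)=\nu_P([0,1])^2h^2\geq 0$ and $c_t(s_P)=c(s_P)-(1-t)\nu_P([0,1])^2\xi'(s_P)\leq 0$ (from $c(s_P)=0$ in Lemma~\ref{lem3}), this yields a unique root $u_t\in[0,s_P]$, equal to $0$ iff $h=0$ and lying in $(0,s_P)$ when $h\neq 0$, as claimed. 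The final compactness/covering step is indeed a transcription of the argument used for Theorem~\ref{thm2}$(ii)$.

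The part that needs more than a sketch is the chaos RSB bound itself: both the claimed identity
\begin{align*}
\mathcal{P}_{u,t}(B_P,0,\nu_P)&=2\ME-(1-t)\int_0^{|u|}\frac{\xi''(q)}{B_P-\hat{\nu}_P(q)}\,dq
\end{align*}
and the identity $\partial_\lambda\mathcal{P}_{u,t}(B_P,\lambda,\nu_P)\big|_{\lambda=0}=c_t(u)$ on $[-s_P,s_P]$ are asserted without derivation. They are plausible given that the disorder correlation $t$ scales the off-diagonal covariance $N t\xi(R_{1,2})$ and hence should rescale the ``shared'' block $\int_0^{|u|}$ in the two-replica Guerra--Talagrand functional, but the precise placement of the $t$-weights in the $2\times 2$ interpolation (and hence the exact form of the $u$-dependent gap at $\lambda=0$) is exactly the content of the cited results in \cite{C15} and \cite{ArnabChen15}. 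You correctly flag this as the main hurdle; the proposal should be read as a correct high-level strategy conditional on that derivation, rather than a complete proof.
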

	
	\begin{lemma}\label{lem6}
	If $h\neq 0,$ then $t\mapsto u_t$ is continuous on $(0,1)$ with $\lim_{t\rightarrow 1-}u_t=s_P.$
	\end{lemma}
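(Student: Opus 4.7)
The plan is to combine a compactness-and-uniqueness argument for continuity on $(0,1)$ with a monotonicity step plus a real-analyticity input to identify the boundary limit. Throughout let $\alpha := \nu_P([0,1])^2$, so $c_t(u) = \alpha(t\xi'(u)+h^2) - u$ is jointly continuous in $(t,u)$; each $c_t$ is convex on $[0,1]$ because $\xi''$ is non-negative and non-decreasing there; and $c_t(0) = \alpha h^2 > 0$ since $h \neq 0$.

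For continuity on $(0,1)$, fix $t_0 \in (0,1)$ and let $t_n \to t_0$. The points $u_{t_n}$ lie in the compact set $[0, s_P]$, so along a subsequence they converge to some $u^\ast$. Joint continuity of $c_t(u)$ gives $c_{t_0}(u^\ast) = 0$, and the uniqueness asserted in Proposition \ref{lem4} forces $u^\ast = u_{t_0}$. A standard subsequence argument then shows $u_{t_n} \to u_{t_0}$.

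For the limit at $t = 1$, I would first show that $t \mapsto u_t$ is strictly increasing on $(0,1)$. For $0 < t_1 < t_2 < 1$,
$$c_{t_1}(u_{t_2}) \;=\; c_{t_2}(u_{t_2}) - (t_2 - t_1)\alpha\xi'(u_{t_2}) \;=\; -(t_2 - t_1)\alpha\xi'(u_{t_2}) \;<\; 0,$$
using $\xi'(u_{t_2}) > 0$ since $u_{t_2} \in (0, s_P) \subset (0,1)$. Combined with $c_{t_1}(0) > 0$, convexity, and the uniqueness of the zero, we must have $u_{t_1} \in (0, u_{t_2})$. Hence $u_t$ is strictly increasing on $(0,1)$, so $u^\ast := \lim_{t \to 1^-} u_t$ exists in $(0, s_P]$, and passing to the limit in $c_t(u_t) = 0$ yields $c_1(u^\ast) = 0$.

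The main obstacle is ruling out $u^\ast < s_P$. From \eqref{thm2:proof:eq3} one has $c_1'(s_P) = \alpha\xi''(s_P) - 1 \leq 0$, and convexity of $c_1$ forces $c_1' \leq 0$ throughout $[0, s_P]$, so $c_1$ is non-increasing there; combined with $c_1(0) > 0$ and $c_1(s_P) = 0$, this gives $c_1 \geq 0$ on $[0, s_P]$. If $u^\ast < s_P$ were a zero, monotonicity and non-negativity would force $c_1 \equiv 0$ on $[u^\ast, s_P]$. Since $c_1(u) = \alpha\xi'(u) + \alpha h^2 - u$ is real-analytic on $(-1,1)$ (as $\xi'$ is a convergent power series there), this would imply $c_1 \equiv 0$ on $(-1,1)$, and in particular $c_1(0) = \alpha h^2 = 0$, contradicting $h \neq 0$. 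The delicate point is this last analyticity step: one cannot in general expect strict inequality $c_1'(s_P) < 0$ (for instance, in the FRSB regime one has equality), so the implicit function theorem is not directly available at $(1, s_P)$, and analyticity must substitute for the would-be IFT conclusion.
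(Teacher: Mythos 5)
Your proof is correct and follows essentially the same route as the paper: both establish continuity of $u_t$ and then rule out $\lim_{t\to 1^-}u_t<s_P$ by showing the putative identity $c_1\equiv 0$ on an interval would, by analyticity of $\xi'$, propagate to all of $(-1,1)$ and contradict $\xi'(0)=0\neq -h^2$. The only differences are cosmetic: the paper invokes the implicit function theorem for continuity (using $\partial_t c_t(u)=\nu_P([0,1])^2\xi'(u)>0$) while you use compactness plus the uniqueness of the zero from Proposition \ref{lem4}, which is arguably the cleaner route since $\partial_u c_t(u_t)$ is not ruled out from vanishing; and your monotonicity step, while correct, is an embellishment the paper avoids by simply passing to a convergent subsequence.
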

	
	\begin{proof}
	From Proposition \ref{lem4}, since $u_t>0$ and $\partial_t c_t(u)=\nu_P\bigl([0,1]\bigr)^2\xi'(u)>0$ for $u\in(0,1)$, the implicit function theorem implies that $u_t$ is continuous on $(0,1).$ If there exists $(t_n)\subset (0,1)$ such that $\lim_{n\rightarrow \infty}t_n=1$ and $v:=\lim_{n\rightarrow \infty}u_{t_n}<s_P$, then passing to the limit yields $$
	c(v)=\nu_P\bigl([0,1]\bigr)^2\bigl(\xi'(v)+h^2\bigr)-v=0.
	$$
	Note that $s_P>0$ since $h\neq 0.$ From \eqref{thm2:proof:eq3}, 
	$$
	c'(u)=\nu_P\bigl([0,1]\bigr)^2\xi''(u)-1\leq \nu_P\bigl([0,1]\bigr)^2\xi''(s_P)-1=c'(s_P)\leq 0.
	$$
	Consequently, the last two displays together with $c(s_P)=0$ deduce that $c(u)=0$ for all $u\in[v,s_P]$ or equivalently 
	\begin{align}
	\label{lem6:proof:eq1}
		\xi'(u)=\frac{u}{\nu_P\bigl([0,1]\bigr)^2}-h^2
	\end{align}
	on $[v,s_P].$ However, since $\xi$ is analytic in $(-1,1)$, this forces that \eqref{lem6:proof:eq1} holds for all $u\in[-1,1]$, a contradiction as $-h^2=\xi'(0)=0.$ 
		
	\end{proof}

	\begin{proof}[\bf Proof of Proposition \ref{thm1.1}] We first prove the case $h\neq 0.$  Let $\varepsilon,\eta>0.$ From Lemma \ref{lem6}, there exists $t_0$ such that 
		\begin{align}
		\label{thm1.1:proof:eq2}
		|u_t-s_P| <\frac{\varepsilon}{2}
		\end{align}
		whenever $t_0<t<1.$ Let $1\leq \ell<\ell'\leq k.$ Denote by $\sigma_t^\ell$ an maximizer of $H_{N,t}^\ell$ over $S_N$. Also denote by $L_N$ the maximum of $|X_N|$ over $S_N$ and by $L_N^\ell$ the maximum of $|X_N^\ell|$ over $S_N.$ Note that
		\begin{align*}
		\Bigl|\frac{H_{N,t}^\ell(\sigma_{t}^\ell)}{N}-\frac{H_{N}(\sigma_{t}^\ell)}{N}\Bigr|\leq \bigl(1-\sqrt{t}\bigr)\frac{L_N}{N}+\sqrt{1-t}\frac{L_N^\ell}{N}.
		\end{align*}
		Using the concentration of measure for Gaussian extrema processes $L_N$ and $L_N^\ell$, it can be show that there exists a constant $C>0$ independent of $t$ such that with probability at least $1-Ce^{-N/C}$,  
			\begin{align*}
			\Bigl|\frac{H_{N,t}^\ell(\sigma_{t}^\ell)}{N}-\frac{H_{N}(\sigma_{t}^\ell)}{N}\Bigr|\leq \bigl(1-\sqrt{t}+\sqrt{1-t}\bigr)C
			\end{align*}
			for all $t\in[0,1]$. Consequently, from \eqref{CS}, 
		\begin{align}
		\label{thm1.1:proof:eq1}\p\Bigl(\frac{H_N(\sigma_t^\ell)}{N}\geq \ME-\eta\Bigr)\leq Ce^{-\frac{N}{C}}
		\end{align}
		provided that $t$ is sufficiently close to $1$ such that 
		$$
		\bigl(1-\sqrt{t}+\sqrt{1-t}\bigr)C<\eta.
		$$
		From now on, we fix a $t>t_0$ for the rest of the proof.
	
	Next, from Proposition \ref{lem4}, one can argue in the same way as the proof of Theorems \ref{thm1}$(i)$ and \ref{thm2}$(ii)$ to show that there exists some $C'>0$ such that the probability 
	\begin{align}\label{thm1.1:proof:eq4}
	\p\Bigl(\bigl|R(\sigma_t^\ell,\sigma_t^{\ell'})-u_t\bigr|>\frac{\varepsilon}{2}\Bigr)\leq C'e^{-\frac{N}{C'}}
	\end{align} 
	for all $N\geq 1.$ From \eqref{thm1.1:proof:eq2},
	\begin{align}\label{thm1.1:proof:eq3}
	\p\Bigl(\bigl|R(\sigma_t^\ell,\sigma_t^{\ell'})-s_P\bigr|>\varepsilon\Bigr)\leq C'e^{-\frac{N}{C'}}.
	\end{align} 
	Note that all the estimates above are independent of $1\leq \ell<\ell'\leq k.$ Combining \eqref{thm1.1:proof:eq1} and \eqref{thm1.1:proof:eq3}, if we take $k$ to be the largest integer such that
	$$
	k\leq \exp\Bigl(\frac{N}{2\max(2C',C)}\Bigr)
	$$
	and $O_N=\{\sigma_t^1,\ldots,\sigma_t^k\}$, then Proposition \ref{thm1.1} follows for the case $h\neq 0.$	The case $h=0$ is easier since now $u_t=0$ for all $t\in (0,1).$ With this we can combine \eqref{thm1.1:proof:eq1} and \eqref{thm1.1:proof:eq4} directly to obtain the announced result.
	
	\end{proof}

	\section{Establishing energy landscapes} 
	
	We provide the proofs for Theorems \ref{thm:RS},  \ref{FRSB} and \ref{thm:1RSB}. The proof of Theorems \ref{thm:RS} and \ref{FRSB} are immediate consequence of Theorems \ref{thm1} and \ref{thm2}, while the verification of Theorem \ref{thm:1RSB} is based on the RSB bound in \eqref{lem5:eq1} with a careful choice of the parameters.
	
	\subsection{RS and FRSB solutions}
	
	The proofs of Theorems \ref{thm:RS} and \ref{FRSB} are immediate consequences of Theorems \ref{thm1} and \ref{thm2}.
	\begin{proof}[\bf Proof of Theorem \ref{thm:RS}]
		From \eqref{RS:eq1}, we see that $\rho_P=\emptyset$ and $\Gamma=\{1\}.$ Since $h\neq 0,$ Theorem \ref{thm:RS} follows from Theorem \ref{thm2}.
	\end{proof}

	\begin{proof}[\bf Proof of Theorem \ref{FRSB}]
		From \ref{eq2}, $\Gamma=[-1,1]$ if $h=0$ and $\Gamma=[s_P,1]$ if $h\neq 0.$ Theorem \ref{FRSB}$(i)$ follows from Theorem \ref{thm1}, while Theorem~\ref{FRSB}$(ii)$ is valid by Theorem \ref{thm2}. 
		
	\end{proof}

	\subsection{1RSB solution}
	
	First we develop an auxiliary lemma.
	Recall the functional $\mathcal{P}$ form \eqref{PME:eq2}, the constant $z$ from \eqref{1RSB:eq0}, and the function $\zeta$ from \eqref{1RSB:lem-1:eq1}.
	
	\begin{lemma}
		\label{lem0}
		Consider $(B,\nu)\in \mathcal{U}$ defined by
		\begin{align*}
		\nu(ds)&=A1_{[0,1)}(s)ds+\Delta \delta_{\{1\}}(ds),\\
		B&=\xi''(1)\Delta+\Delta^{-1},
		\end{align*}
		where
		\begin{align}
		\begin{split}\label{lem0:eq1}
		\delta&:=z(1+z)^{-1},\\
		A&:=\delta^{1/2}z^{1/2}\xi'(1)^{-1/2}=z(1+z)^{-1/2}\xi'(1)^{-1/2},\\
		\Delta&:=\delta^{1/2} z^{-1/2}\xi'(1)^{-1/2}=(1+z)^{-1/2}\xi'(1)^{-1/2}.
		\end{split}
		\end{align}
		Recall the two functions $f,\bar f$ in Proposition \ref{prop1} associated to $(B,\nu)$ and $h=0.$ Then
		\begin{align}
		\begin{split}\label{lem0:eq2}
		\bar{f}(s)&=-\zeta(s),	\,\,\forall s\in[0,1],
		\end{split}\\
		\begin{split}
		\label{lem0:eq3}
		f(1)&=0
		\end{split}
		\end{align}
		and
		\begin{align}\label{lem0:eq4}
		\mathcal{P}(B,\nu)&=\frac{\xi'(1)+z\xi(1)}{\sqrt{(1+z)\xi'(1)}}.
		\end{align}
	\end{lemma}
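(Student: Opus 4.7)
The plan is to verify the three claims by direct computation, exploiting the very simple structure of $\nu$ (uniform density on $[0,1)$ plus atom at $1$) together with the defining relation \eqref{1RSB:eq0} for $z$ and the normalization $\xi(1)=\sum_{p}c_{p}=1$.

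First I would compute $\hat{\nu}(s)$ explicitly: from $\nu(ds)=A\,1_{[0,1)}(s)ds+\Delta\delta_{\{1\}}$ one gets $\hat{\nu}(s)=A(\xi'(1)-\xi'(s))+\xi''(1)\Delta$, and then
$$
B-\hat{\nu}(s)=\Delta^{-1}-A\xi'(1)+A\xi'(s).
$$
Plugging in \eqref{lem0:eq1} and simplifying, the key identity
$$
\Delta^{-1}-A\xi'(1)=(1+z)^{-1/2}\xi'(1)^{1/2}
$$
collapses this to the compact form
$$
B-\hat{\nu}(s)=(1+z)^{-1/2}\xi'(1)^{-1/2}\bigl(\xi'(1)+z\xi'(s)\bigr).
$$
This is the workhorse of the whole lemma.

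Next, to obtain $f$ I would observe that $\frac{d}{ds}\bigl[-(\xi'(1)+z\xi'(s))^{-1}\bigr]=z\xi''(s)/(\xi'(1)+z\xi'(s))^{2}$. Using $\xi'(0)=0$ one evaluates $\int_{0}^{r}\xi''(s)(B-\hat{\nu}(s))^{-2}\,ds$ in closed form and obtains
$$
f(r)=\frac{(1+z)\xi'(r)}{\xi'(1)+z\xi'(r)}-r,
$$
which at $r=1$ gives \eqref{lem0:eq3} immediately. For \eqref{lem0:eq2} I would integrate $f(r)\xi''(r)$ from $s$ to $1$: splitting $\xi'(r)/(\xi'(1)+z\xi'(r))=\frac{1}{z}-\frac{\xi'(1)}{z(\xi'(1)+z\xi'(r))}$, the first piece integrates to elementary terms and the second to a logarithm, while $\int_{s}^{1}r\xi''(r)\,dr=\xi'(1)-s\xi'(s)-\xi(1)+\xi(s)$ by integration by parts. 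Collecting everything and comparing with $-\zeta(s)$, the $\xi'(s)$-terms cancel identically, the logarithm matches exactly, and the constant residue equals $-1+\xi(1)$. The main (only) algebraic obstacle is that this residue must vanish; it does, because $\xi(1)=\sum_{p\in 2\mathbb{N}}c_{p}=1$, while the coefficient of $\log(1+z)$ and the free term combine through the defining identity \eqref{1RSB:eq0} to ensure there is no leftover $\log$ contribution.

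Finally, for \eqref{lem0:eq4} I would evaluate $\mathcal{P}(B,\nu)$ term by term with $h=0$. The log integral gives
$$
\int_{0}^{1}\frac{\xi''(s)}{B-\hat{\nu}(s)}ds=\frac{(1+z)^{1/2}\xi'(1)^{1/2}}{z}\log(1+z),
$$
and, using $\xi(1)=1$, one finds
$$
B-\int_{0}^{1}s\xi''(s)\nu(ds)=\Delta^{-1}-A\xi'(1)+A=(1+z)^{-1/2}\xi'(1)^{-1/2}(\xi'(1)+z).
$$
Averaging the two gives the claimed value once the defining relation for $z$ is rewritten as $(1+z)\xi'(1)\log(1+z)/z=\xi'(1)+z$; this is precisely \eqref{1RSB:eq0} after multiplication by $z\xi'(1)$. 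So the whole lemma reduces to the defining equation for $z$ and the normalization $\xi(1)=1$, and the only real care needed is bookkeeping of constants in the log and polynomial parts.
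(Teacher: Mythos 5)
Your computation is correct and follows essentially the same route as the paper's proof: compute $B-\hat\nu(s)$, integrate the rational integrand in closed form, apply the same partial-fraction split and integration by parts for $\bar f$, and close the loop using the defining relation \eqref{1RSB:eq0} and the normalization $\xi(1)=1$. The only stylistic difference is that you work directly with $\xi'(1)+z\xi'(s)$ and exhibit the clean closed form $f(r)=\frac{(1+z)\xi'(r)}{\xi'(1)+z\xi'(r)}-r$, whereas the paper passes through the auxiliary variables $\delta=z/(1+z)$ and $w(s)=\xi'(s)/\xi'(1)$; the two are algebraically identical.
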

	
	\begin{proof}
		Since
		\begin{align*}
		B-\hat{\nu}(s)&=\Delta^{-1}-A(\xi'(1)-\xi'(s)),
		\end{align*}
		a direct computation gives
		\begin{align*}
		\int_0^s\frac{\xi''(r)dr}{\bigl(B-\hat{\nu}(r)\bigr)^2}&=-\frac{1}{A\bigl(\Delta^{-1}-A(\xi'(1)-\xi'(r))\bigr)}\Big|_0^s\\
		&=\frac{\Delta^2\xi'(s)}{\bigl(1-\Delta A(\xi'(1)-\xi'(s))\bigr)\bigl(1-\Delta A\xi'(1)\bigr)}\\
		&=\frac{\delta w(s)}{z(1-\delta)(1-\delta+\delta w(s))}
		\end{align*}
		for $w(s):=\xi'(s)/\xi'(1).$ Thus, \eqref{lem0:eq3} follows from
		\begin{align*}
		{f}(1)&=\int_0^1 \frac{\xi''(r)dr}{\bigl(B-\hat{\nu}(r)\bigr)^2}-1=\frac{\delta }{z(1-\delta)(1-\delta+\delta)}-1=0.
		\end{align*}
		In addition, we compute
		\begin{align*}
		&\int_u^1\int_0^s\frac{\xi''(r)dr}{\bigl(B-\hat{\nu}(r)\bigr)^2}\xi''(s)ds\\
		&=\frac{\delta}{(1-\delta)z}\int_u^1\frac{\delta w(s)\xi''(s)ds}{1-\delta+\delta w(s)}\\
		&=\frac{1}{(1-\delta)z}\int_u^1 \Bigl(1-\frac{1-\delta}{1-\delta+\delta w(s)}\Bigr)\xi''(s)ds\\
		&=\frac{1+z}{z}\int_u^1\Bigl(\xi''(s)-\frac{\xi''(s)}{1+\frac{z\xi'(s)}{\xi'(1)}}\Bigr)ds\\
		&=\frac{1+z}{z}\bigl(\xi'(1)-\xi'(u)\bigr)-\frac{(1+z)\xi'(1)}{z^2}\Bigl(\log \Bigl(1+\frac{z\xi'(s)}{\xi'(1)}\Bigr)\Big|_{u}^1\Bigr)\\
		&=\frac{1+z}{z}\bigl(\xi'(1)-\xi'(u)\bigr)-\frac{(1+z)\xi'(1)}{z^2}\Bigl(\log(1+z)-\log\Bigl(1+\frac{z\xi'(u)}{\xi'(1)}\Bigr)\Bigr),
		\end{align*}
		and
		\begin{align*}
		\int_u^1s\xi''(s)ds&=\xi'(1)-\xi'(u)u-\bigl(\xi(1)-\xi(u)\bigr).
		\end{align*}
		Combining these two equations together and applying \eqref{1RSB:eq0} yield
		\begin{align*}
		\bar{f}(u)&=\int_u^1 \Bigl(\int_0^s \frac{\xi''(r)dr}{\bigl(B-\hat{\nu}(r)\bigr)^2}-s\Bigr)\xi''(s)ds\\
		&=\xi(1)-\xi(u)-\xi'(1)+\xi'(u)u+\frac{1+z}{z}\bigl(\xi'(1)-\xi'(u)\bigr)\\
		&\quad-\frac{(1+z)\xi'(1)}{z^2}\Bigl(\log(1+z)-\log\Bigl(1+\frac{z\xi'(u)}{\xi'(1)}\Bigr)\Bigr)\\
		&=-\zeta(u).
		\end{align*}
		This gives \eqref{lem0:eq2}. As for \eqref{lem0:eq4}, it can be justified by
		\begin{align*}
		\mathcal{P} (B,\nu)
		&=\frac{1}{2}\Bigl(\Delta^{-1}-(\xi'(1)-\xi(1))A+\frac{1}{A}\log \frac{1}{1-\delta}\Bigr)\\
		&=\frac{1}{2}\Bigl(\sqrt{(1+z)\xi'(1)}-\frac{z(\xi'(1)-\xi(1))}{\sqrt{(1+z)\xi'(1)}}+\frac{\sqrt{(1+z)\xi'(1)}}{z}\log (1+z)\Bigr)\\
		&=\frac{1}{2\sqrt{(1+z)\xi'(1)}}\Bigl((1+z)\xi'(1)-z(\xi'(1)-\xi(1))+\frac{(1+z)\xi'(1)}{z}\log(1+z)\Bigr)\\
		&=\frac{1}{2\sqrt{(1+z)\xi'(1)}}\Bigl(\xi'(1)+z\xi(1)+z\xi'(1)\Bigl(\frac{\xi(1)}{\xi'(1)}+\frac{1}{z}\Bigr)\Bigr)\\
		&=\frac{\xi'(1)+z\xi(1)}{\sqrt{(1+z)\xi'(1)}}.
		\end{align*}
	\end{proof}
	
	\begin{proof}[\bf Proof of Theorem \ref{thm:1RSB}]
		Assume that the Parisi measure $\rho_P$ is 1RSB with $\mbox{supp}\rho_P=\{0\}.$ If $c_p=0$ for all even $p\geq 4,$ then $\xi(s)=s^2.$ In this case, we learn from \eqref{RS:eq1} that $\rho_P$ must be RS, a contradiction. Thus, $c_p>0$ for at least one even $p\geq 4.$  We prove that $\zeta\leq 0$ on $[0,1].$ Write 
		\begin{align}
		\label{thm:1RSB:proof:eq0}
		\nu_P(ds)=A_P1_{[0,1)}(s)ds+\Delta_P\delta_{\{1\}}(ds)
		\end{align}
		for some $A_P,\Delta_P>0.$ Recall the variational representation \eqref{CS}. It is known from \cite[Theorem 2]{ArnabChen15} that the optimality of $\nu_P$ in the Crisanti-Sommers formula yields the following two equations
		\begin{align*}
		\int_0^1\Bigl(\xi'(s)-\int_0^s\frac{dr}{\nu_P([r,1])^2}\Bigr)ds&=0,\\
		\int_0^1\frac{ds}{\nu_P([s,1])^2}&=\xi'(1).
		\end{align*}
		Plugging \eqref{thm:1RSB:proof:eq0} into these equations gives
		\begin{align}
		\begin{split}\label{thm:1RSB:proof:eq11}
		\frac{1}{A_P^2}\log\Bigl(1+\frac{A_P}{\Delta_P}\Bigr)-\frac{1}{A_P(A_P+\Delta_P)}&=\xi(1),
		\end{split}\\
		\begin{split}
		\label{thm:1RSB:proof:eq12}
		\frac{1}{\Delta_P(A_P+\Delta_P)}&=\xi'(1).
		\end{split}
		\end{align}
		A substitution of \eqref{thm:1RSB:proof:eq11}   by \eqref{thm:1RSB:proof:eq12} yields
		\begin{align*}
		\frac{\Delta_P (A_P+\Delta_P)}{A_P^2}\xi'(1)\log\Bigl(1+\frac{A_P}{\Delta_P}\Bigr)-\frac{\Delta_P}{A_P}\xi'(1)&=\xi(1).
		\end{align*}
		If we let $z=A_P/\Delta_P,$ then this equation coincides with \eqref{1RSB:eq0}. Furthermore, from \eqref{thm:1RSB:proof:eq12}, we obtain
		\begin{align*}
		A_P&=z(1+z)^{-1/2}\xi'(1)^{-1/2},\\
		\Delta_P&=(1+z)^{-1/2}\xi'(1)^{-1/2}.
		\end{align*} to get \eqref{thm1:1RSB:eq1} and \eqref{thm1:1RSB:eq2}. Now by comparing the two formulas \eqref{CS} and \eqref{PME:eq1} and letting $B=\xi''(1)\Delta_P+\Delta_P^{-1}$, since a direct verification gives
		\begin{align*}
		\mathcal{Q}(\nu_P)=\frac{\xi'(1)+z\xi(1)}{\sqrt{(1+z)\xi'(1)}}=\mathcal{P}(B,\nu_P),
		\end{align*}
		we see that $B_P=B$ by Theorem \ref{PME}. Next, recall the functions $f,\bar{f}$ associated to $(B_P,\nu_P)$ and $h=0$ from Proposition \ref{prop1}. Then Proposition \ref{prop1} and Lemma \ref{lem0} together imply that $-\zeta(s)=\bar{f}(s)\geq 0$ for all $s\in[0,1].$ This validates \eqref{thm:1RSB:eq1}.
		
		Conversely, assume that $c_p>0$ for at least one even $p\geq 4$ and \eqref{thm:1RSB:eq1} is valid. From Lemma~\ref{lem0}, recall the pair $(B,\nu)$ and note $\bar{f}(s)=-\zeta(s)$. From \eqref{thm:1RSB:eq1} and \eqref{lem0:eq2}, it follows that $\bar{f}(s)\geq 0$ on $[0,1]$ and $\bar{f}(0)=0$. In addition, $f(1)=0$ by \eqref{lem0:eq3} and $\rho(S)=A=\rho([0,1))$ since $0\in S$, where $S=\{s\in[0,1):f(s)=0\}$ and $\rho$ the measure induced by $\nu.$ These together imply that $(B,\nu)$ must be the minimizer of $\mathcal{P}$ by Proposition \ref{prop1}. This means that the Parisi measure is 1RSB with $\mbox{supp}\rho_P=\{0\}.$ Finally the validity of \eqref{thm1:1RSB:eq1} and \eqref{thm1:1RSB:eq2} follows by Lemma \ref{lem0}.
		
	\end{proof}
	
	\begin{proof}[\bf Proof of Theorem \ref{thm:1RSB:os}] Assume that $h=0$ and \eqref{thm:1RSB:os:eq1} holds. We verify the following inequality: For any $0<\varepsilon<1/2,$ there exists some $\eta>0$ such that
		\begin{align*}
		\limsup_{N\rightarrow\infty}\MCE_{N}\bigl([-1+\varepsilon,-\varepsilon]\cup[\varepsilon,1-\varepsilon]\bigr)<2\ME-\eta.
		\end{align*} 
		The validity of this inequality is equivalent to 
		Recall $(B,\nu)$ from \eqref{lem0:eq1} and $\delta,A,\Delta$ from \eqref{lem0:eq1}.
		Let $u\in[-1,1]$ with $\varepsilon\leq |u|\leq 1-\varepsilon.$ Denote $a=|u|$. For $0<m<2$, set 
		\begin{align*}
		\nu_m(ds)&=A\bigl(m 1_{[0,a)}(s)+1_{[a,1)}(s)\bigr)ds+\Delta \delta_{\{1\}}(ds).
		\end{align*}
		Note that for $s\in[0,1),$
		\begin{align*}
		B-	\hat{\nu}_m(s)&=\int_s^1\xi''(r)\nu_m(dr)\\
		&=\Delta^{-1}-A\bigl(m(\xi'(a)-\xi'(s))+\xi'(1)-\xi'(a)\bigr)1_{[0,a)}(s)-A\bigl(\xi'(1)-\xi'(s)\bigr)1_{[a,1)}(s).
		\end{align*}
		Then
		\begin{align*}
		&\int_0^{1} \frac{\xi''(s)}{B-\hat{\nu}_m (s)}ds\\
		&=\frac{1}{Am}\log\bigl(\Delta^{-1}-Am(\xi'(a)-\xi'(s))-A(\xi'(1)-\xi'(a))\bigr)\Big|_{0}^a+\log \bigl(\Delta^{-1}-A(\xi'(1)-\xi'(s)\bigr)\Bigl|_a^1\\
		&=\frac{1}{Am}\log \frac{\Delta^{-1}-A(\xi'(1)-\xi'(a))}{\Delta^{-1}-Am\xi'(a)-A(\xi'(1)-\xi'(a))}-\frac{1}{A}\log\frac{\Delta^{-1}-A(\xi'(1)-\xi'(a))}{\Delta^{-1}}\\
		&=\frac{1}{Am}\log \frac{1-A\Delta(\xi'(1)-\xi'(a))}{1-A\Delta m\xi'(a)-A\Delta(\xi'(1)-\xi'(a))}-\frac{1}{A}\log\bigl(1-A\Delta(\xi'(1)-\xi'(a)\bigr)\\
		&=\frac{1}{Am}\log \frac{1-\delta(1-w(a))}{1-\delta mw-\delta(1-w(a))}-\frac{1}{A}\log\bigl(1-\delta(1-w(a))\bigr)
		\end{align*}
		for $w(a):=\xi'(a)/\xi'(1)$. In addition,
		\begin{align*}
		\int_0^1s\xi''(s)\nu_m(ds)&=Am\int_0^as\xi''(s)ds+A\int_a^1s\xi''(s)ds+\xi''(1)\Delta\\
		&=Am(a\xi'(a)-\xi(a))+A\bigl(\xi'(1)-\xi(1)-(a\xi'(a)-\xi(a))\bigr)+\xi''(1)\Delta\\
		&=-A(a\xi'(a)-\xi(a))(1-m)+A(\xi'(1)-\xi(1))+\xi''(1)\Delta.
		\end{align*}
		From these two equations,
		\begin{align*}
		\mathcal{P}_u (B,0,\nu_m) &=\int_0^{1} \frac{\xi''(s)}{B-\hat{\nu}_m (s)}ds+B-\int_0^1s\xi''(s)\nu_m(ds)\\
		&=\Delta^{-1}+A(a\xi'(a)-\xi(a))(1-m)-A(\xi'(1)-\xi(1))\\
		&\quad    +\frac{1}{Am}\log \frac{1-\delta(1-w(a))}{1-\delta mw(a)-\delta(1-w(a))}-\frac{1}{A}\log\bigl(1-\delta(1-w(a))\bigr).
		\end{align*}
		In particular, if $m=1$, from the first equality and \eqref{thm1:1RSB:eq2}, 
		\begin{align*}
		\mathcal{P}_u(B,0,\nu_1)&=2\mathcal{P}(B,\nu)=2\ME.
		\end{align*}
		Next, 
		\begin{align*}
		\partial_m	\mathcal{P}_u (B,0,\nu_m)\Big|_{m=1}&=-A(a\xi'(a)-\xi(a))+\frac{1}{A}\log\frac{1-\delta}{1-\delta(1-w)}+\frac{\delta w(a)}{A(1-\delta)}\\
		&=A\Bigl(-(a\xi'(a)-\xi(a))+\frac{1}{A^2}\log\frac{1-\delta}{1-\delta(1-w(a))}+\frac{\delta w(a)}{A^2(1-\delta)}\Bigr)\\
		&=A\Bigl(-(a\xi'(a)-\xi(a))-\frac{\xi'(1)(1+z)}{z^2}\log(1+w(a)z)+\frac{\xi'(1)w(a)(1+z)}{z}\Bigr)\\
		&=A\zeta(a),
		\end{align*}
		where $\zeta$ is defined in \eqref{1RSB:lem-1:eq1}.
		Since $\zeta<0$ on $(0,1)$, 
		$$
		\partial_m	\mathcal{P}_u (B,0,\nu_m)\Big|_{m=1}<0.
		$$
		Since $(u,m)\mapsto\partial_m\mathcal{P}_u(B,0,\nu_m)$ is continuous on $\{u:\varepsilon\leq |u|\leq 1-\varepsilon\}\times [0,2]$, from the mean value theorem, there exist $m$ around $1$ and $\eta>0$ such that for any $u$ with$|u|\in[\varepsilon,1-\varepsilon]$,
		\begin{align*}
		\mathcal{P}_u (B,0,\nu_m)&\leq \mathcal{P}_u (B,0,\nu_1)-4\eta\\
		&=2\ME -4\eta.
		\end{align*}
		Therefore, from Theorem \ref{lem5}, for any $u$ satisfying $|u|\in[\varepsilon,1-\varepsilon],$
		\begin{align*}
		\lim_{\varepsilon'\downarrow 0}\limsup_{N\rightarrow\infty}\MCE_{N} \bigl((u-\varepsilon',u+\varepsilon')\bigr)<2\ME -4\eta.
		\end{align*}
		The assertion \eqref{1RSB:eq1} then follows by an identical argument as the proof of Theorem \ref{thm2}$(ii).$
		
	\end{proof}

	\begin{proof}[\bf Proof of Theorem \ref{cor1}] Recall $\zeta$ from \eqref{1RSB:lem-1:eq1}. Our goal is to show that $\zeta<0$ on $(0,1).$ Observe that $\zeta(0)=\zeta(1)=0$. Computing directly gives
		\begin{align*}
		\zeta'(s)&=\xi''(s)(1-s)-\frac{\xi'(1)(1+z)}{z}\frac{\xi''(s)}{\xi'(1)+z\xi'(s)}+\frac{\xi''(s)}{z}\\
		&=\frac{\xi''(s)}{z(\xi'(1)+z\xi'(s)}\bigl(z(1-s)(\xi'(1)+z\xi'(s))-\xi'(1)(1+z)+\xi'(1)+z\xi'(s)\bigr)\\
		&=-\frac{s\xi''(s)}{\xi'(1)+z\xi'(s)}\Bigl(\xi'(1)+z\xi'(s)-\frac{\xi'(s)}{s}(1+z)\Bigr)\\
		&=-\frac{\xi'(1)\xi'(s)\xi''(s)\phi(s)}{\xi'(1)+z\xi'(s)},
		\end{align*}
		where
		\begin{align*}
		\phi(s)&:=\frac{s}{\xi'(s)}+\frac{zs}{\xi'(1)}-\frac{1+z}{\xi'(1)},\,\,s\in[0,1].
		\end{align*}
		Observe that from \eqref{cor:eq1}, $$
		\phi(0)=\frac{1}{\xi''(0)}-\frac{(1+z)}{\xi'(1)}> 0.
		$$
		If $c_p=0$ for all even $p\geq 4,$ then $z=0$ and $\phi(0)=0$, which contradicts the above inequality. Thus, we may assume that $c_p\neq 0$ for at least one even $p\geq 4.$ From this and \eqref{cor:eq2},
		\begin{align*}
		\frac{s}{\xi'(s)}+\frac{zs}{\xi'(1)}
		\end{align*} 
		is a convex function on $(0,1)$. Since $\phi(1)=0,$ we conclude that on $(0,1)$ $\phi$ has at most one zero and therefore, $\zeta<0$ on $(0,1).$ 
		
	\end{proof}
	
	\begin{proof}[\bf Proof of Corollary \ref{cor2}]
		Our proof relies on Corollary \ref{cor1}. Note that $p,q\ge 4$ implies $\xi''(0)=0,$ so the condition \eqref{cor:eq1} is satisfied. To verify \eqref{cor:eq2}, we denote $\psi(s)=\xi'(s)/s$ and compute
		\begin{align*}
		\frac{d^2}{ds^2}\frac{1}{\psi(s)}&=\frac{1}{\psi(s)^2}\bigl(2\psi'(s)^2-\psi(s)\psi''(s)\bigr).
		\end{align*}
		Since
		\begin{align*}
		\psi(s)&=pcs^{p-2}+q(1-c)s^{q-2},
		\end{align*}
		a long computation yields
		\begin{align*}
		2\psi'(s)^2-\psi(s)\psi''(s)&=c^2(p-2)(p-2)p^2s^{2p-6}+(1-c)^2(q-2)(q-1)q^2s^{2p-6}\\
		&\quad-c(1-c)pq\bigl(3(p+q)+(p-q)^2-(2pq+4)\bigr)s^{p+q-6}.
		\end{align*}
		Here the second line of this equation is nonnegative provided the assumption \eqref{cor2:eq2} is in force. 
		
	\end{proof}
	
	\bibliography{ref}
	\bibliographystyle{plain}

\end{document}